\pgfplotsset{compat=1.14} 
\newtheorem{theorem}{Theorem}
\newtheorem{pro}{ Proposition }
\newtheorem{rem}{Remark}
\renewcommand{\d}{\operatorname{d}}
\newcommand{\Exp}[1]{\operatorname{e}^{#1}}
\renewcommand{\Re}{\operatorname{Re}}
\newcommand{\diag}{\operatorname{diag}}
\newcommand{\I}{\mathbb{I}}
\newcommand{\C}{\mathbb{C}}
\newcommand{\N}{\mathbb{N}}
\newcommand{\R}{\mathbb{R}}
\def\@settitle{\begin{center}%
		\baselineskip14\p@\relax
		\bfseries
		\uppercasenonmath\@title
		\@title
		\ifx\@subtitle\@empty\else
		\\[1ex]\uppercasenonmath\@subtitle
		\footnotesize\mdseries\@subtitle
		\fi
	\end{center}%
}
\def\subtitle#1{\gdef\@subtitle{#1}}
\def\@subtitle{}
\begin{document}
\title[Discrete Orthogonal Polynomials and Cholesky Factorization]{Pearson Equations for  Discrete Orthogonal Polynomials:\\I. Generalized Hypergeometric Functions and Toda Equations}

\author[M Mañas]{Manuel Mañas$^1$}
\address{$^1$Departamento de Física Teórica, Universidad Complutense de Madrid, Plaza Ciencias 1, 28040-Madrid, Spain \&
	Instituto de Ciencias Matematicas (ICMAT), Campus de Cantoblanco UAM, 28049-Madrid, Spain}
\email{$^1$manuel.manas@ucm.es}


\author[I Fernández-Irisarri]{Itsaso Fernández-Irisarri$^2$}
\email{$^2$itsasofe@ucm.es}

\author[OF González-Hernández]{Omar F. González-Hernández$^3$}
\email{$^3$omgonzal@math.uc3m.es}

\thanks{$^1$Thanks financial support from the Spanish ``Agencia Estatal de Investigación" research project [PGC2018-096504-B-C33], \emph{Ortogonalidad y Aproximación: Teoría y Aplicaciones en Física Matemática}.}

\subjclass{42C05,33C45,33C47}
	
	\keywords{Discrete orthogonal polynomials, Pearson equations, Cholesky factorization,
		 generalized hypergeometric functions, contigous relations, 3D Nijhoff--Capel discrete Toda equation, Toda hierarchy}

\begin{abstract}
The Cholesky factorization of the moment matrix is applied to discrete orthogonal polynomials on the homogeneous lattice. In particular, semiclassical discrete orthogonal polynomials, which are built in terms of a discrete Pearson equation, are studied. The Laguerre--Freud structure semi-infinite matrix  that models the shifts by $\pm 1$  in the independent variable of the set of orthogonal polynomials is introduced. In the semiclassical case it is proven that this Laguerre--Freud  matrix is  banded.  From the well known fact that moments of the semiclassical weights are logarithmic derivatives of generalized hypergeometric functions, it is shown how the contiguous relations  for these hypergeometric functions translate as symmetries for the corresponding moment matrix. It is found that the 3D Nijhoff--Capel  discrete Toda lattice describes the corresponding contiguous shifts for the squared norms of the orthogonal polynomials. The  continuous Toda  for these semiclassical discrete orthogonal polynomials is discussed and the compatibility equations are derived. It also shown that the Kadomtesev--Petvishvilii equation is  connected to an adequate deformed semiclassical discrete weight, but in this case the deformation do not satisfy a Pearson equation.
\end{abstract}

\maketitle

\tableofcontents

\section{Introduction}

In this paper, the Gauss--Borel factorization of the moment matrix approach to orthogonality  is applied to study discrete orthogonal polynomials subject to a Pearson equation.

Discrete orthogonal polynomials is nowadays a well established subject, the classical case has been treated extensively in    \cite{NSU} and the the Riemann--Hilbert problem has been used to study asymptotics  and applications, see  \cite{baik}.  
Semiclassical reductions, in where a discrete Pearson equation is fulfilled by the weight has been also  treated in the literature. See for example \cite{diego_paco,diego_paco1} and \cite{diego,diego1} and references therein for an comprehensive account. For some specific type of weights of generalized Charlier and Meixner types, the corresponding  Freud--Laguerre type equations for the coefficients of the three term recurrence has been studied, see for example \cite{clarkson,filipuk_vanassche0,filipuk_vanassche1,filipuk_vanassche2,smet_vanassche}. For a general account see \cite{Ismail,Ismail2,Beals_Wong} and for discrete orthogonal polynomials and Painlevé equations see \cite{walter}.

With this introduction we give a  fast briefing, of introductory character,  of orthogonal polynomials, its discrete version and the Cholesky factorization of the moment matrix. We also discuss for the first time in this context the so called Pascal matrices and its dressed versions.

Then, in \S 2   we discuss the discrete Pearson equation and give Theorem \ref{teo:symmetry_Gram},  the first main result of the paper, 
that describes a new symmetry of the moment matrix, direct consequence of the Pearson equation, in terms of the Pascal matrix. We then deduce the corresponding symmetry for the Jacobi matrix, see Proposition \ref{pro:Jacobi_Pearson}. Then, in Theorem \ref{teo:Laguerre-Freud}, the second main result of the paper,  we describe a banded semi-infinite matrix, that we called Laguerre--Freud structure matrix, that models the shift in the spectral variable. The name is rooted on the fact that this banded matrix encodes  Laguerre--Freud equations for the recurrence coefficients of the orthogonal polynomial sequence, see for example \cite{Fernandez-Irrisarri_Manas}. The number of non-trivial superdiagonals and subdiagonals of this matrix is determined by the Pearson equation. Several properties involving the Jacobi and Laguerre--Freud matrices are derived. Of particular interest are those of compatibility type.

As  the Pearson equation leads to generalized hypergeometric moments, we study the contiguous relations of these functions and its description as further symmetries of the moment matrix, see  Theorem \ref{pro:Hypergeometric relations}. In Theorem \ref{teo:Nijhoff-Capel} the squared norms of these discrete orthogonal polynomials are shown to be solutions of a well known discrete integrable multidimensional  lattice, known as the Nijhoff--Capel lattice, see \cite{Hietarinta,nijhoff}.  We also discuss the Toda hierarchy deformations, with only the first flow preserving the Pearson reduction. The compatibility with this first Toda flow and the Pearson equation is discussed in Proposition \ref{pro:compatibility Pearson Toda}. We also give the connection of deformations of these weights with solutions of the Kadomtsev--Petviashvilli (KP) equation.

\subsection{Linear functionals and orthogonal polynomials}Given a linear functional $\rho_z\in\C^*[z]$, here $\C^*[z]$ denotes the dual of the linear space of complex polynomials $\C[z]$,  the corresponding   moment  matrix is
 \begin{align*}
 G&=(G_{n,m}), &G_{n,m}&=\rho_{n+m}, &\rho_n&= \big\langle\rho_z,z^n\big\rangle, & n,m\in\N_0:=\{0,1,2,\dots\},
 \end{align*}
with $\rho_n$ the $n$-th moment of the linear functional $\rho_z$.
If the moment matrix is such that  all its truncations, which are Hankel matrices, $G_{i+1,j}=G_{i,j+1}$,
 \begin{align*}
 G^{[k]}=\begin{pNiceMatrix}
 G_{0,0}&\Cdots &G_{0,k-1}\\
 \Vdots & & \Vdots\\
 G_{k-1,0}&\Cdots & G_{k-1,k-1}
 \end{pNiceMatrix}=\begin{pNiceMatrix}[columns-width = 0.5cm]
 \rho_{0}&\rho_1&\rho_2&\Cdots &\rho_{k-1}\\
 \rho_1 &\rho_2 &&\Iddots& \rho_k\\
 \rho_2&&&&\Vdots\\
 \Vdots& &\Iddots&&\\[9pt]
 \rho_{k-1}&\rho_k&\Cdots &&\rho_{2k-2}
 \end{pNiceMatrix}
 \end{align*}
 are nonsingular; i.e. the Hankel determinants $\varDelta_k:=\det G^{[k]} $ do not cancel, $\varDelta_k\neq 0 $, $k\in\N_0$, then there exists monic polynomials
 \begin{align}\label{eq:polynomials}
 P_n(z)&=z^n+p^1_n z^{n-1}+\dots+p_n^n, & n&\in\N_0,
 \end{align}
 with $p^1_0=0$, such that the following  orthogonality conditions are fulfilled
 \begin{align*}
  \big\langle \rho, P_n(z)z^k\big\rangle &=0, & k&\in\{0,\dots,n-1\},&
   \big\langle \rho, P_n(z)z^n\big\rangle &=H_n\neq 0.
 \end{align*}
 Moreover, the set $\{P_n(z)\}_{n\in\N_0}$ is an orthogonal set of polynomials
$ \big\langle\rho,P_n(z)P_m(z)\big\rangle=\delta_{n,m}H_n,$ $n,m\in\N_0$.
 In this case, we have a  symmetric bilinear form 
$ \langle F, G\rangle_\rho:=\langle \rho, FG\rangle$,
 such that the moment matrix is the Gram matrix of this bilinear form and
 $	\langle P_n, P_m\rangle_\rho:=\delta_{n,m} H_n$.
 In this paper we will use both denominations, moment matrix or Gram matrix,  indistinctly to refer to the matrix $G$.
 
\subsection{The shift matrix $\Lambda$}In terms of the semi-infinite vector of monomials
\begin{align*}
\chi(z):=\begin{pNiceMatrix}
1\\z\\z^2\\\Vdots
\end{pNiceMatrix}
\end{align*}
the Gram matrix is written as $G=\left\langle\rho, \chi\chi^\top\right\rangle$, and it becomes evident this matrix  is symmetric, i.e., $G=G^\top$.
The vector of monomials $\chi$ is an eigenvector of the \emph{shift matrix}
\begin{align*}
\Lambda:=\left(\begin{NiceMatrix}[columns-width = auto]
0 & 1 & 0 &\Cdots&\\
\Vdots& \Ddots&\Ddots &\Ddots&\\
&&&&\\
&&&&
\end{NiceMatrix}\right)
\end{align*}
i.e., $\Lambda \chi=x\chi$.
From here it  immediately   follows that
$\Lambda G=G\Lambda^\top$,
i.e., the Gram matrix is a Hankel matrix, as we previously said.
The transposed matrix 
\begin{align*}
\Lambda^\top=\left(\begin{NiceMatrix}[columns-width = auto]
0 & \Cdots  &&&\\
1 & \Ddots &&&\\
0 &\Ddots &&&\\
\Vdots&\Ddots&&&\\
&&&&
\end{NiceMatrix}\right)
\end{align*}
satisfies
$\Lambda\Lambda^\top=I$ and $\Lambda^\top\Lambda=I-E_{0,0}$, 
with $(E_{i,j})_{k,l}=\delta_{i,k}\delta_{j,l}$, $i,j\in\N_0$.

\subsection{The Cholesky factorization of the Gram matrix}
Being the Gram matrix symmetric its Borel--Gauss factorization reduces to a Cholesky factorization
\begin{align}\label{eq:Cholesky}
G=S^{-1}HS^{-\top},
\end{align}
where $S$ is a lower unitriangular matrix that can be written as 
\begin{align*}
	S=\left(\begin{NiceMatrix}[columns-width = .5cm]
	1 & 0 &\Cdots &\\
	S_{1,0 } &  1&\Ddots&\\
	S_{2,0} & S_{2,1} & \Ddots &\\
	\Vdots & \Ddots& \Ddots& 
	\end{NiceMatrix}\right),
	\end{align*}
	and $H=\diag(H_0,H_1,\dots)$ is a  diagonal matrix, with $H_k\neq 0$, for $k\in\N_0$.
	The Cholesky  factorization  hold whenever the principal minors of the moment matrix; i.e., the Hankel determinants $\varDelta_k$,  do not cancel.
	
	The components $P_n(z)$ of the  semi-infinite vector of  polynomials
	\begin{align}\label{eq:PS}
	P(z):=S\chi(z),
	\end{align}
	are the monic orthogonal polynomials of the functional $\rho$.  Indeed, from the Cholesky factorization we know that
	\begin{align*}
\left\langle\rho, \chi\chi^\top\right\rangle=G=S^{-1}HS^{-\top}
	\end{align*}
	so that
$S	\left\langle\rho, \chi\chi^\top\right\rangle S^\top=H$
	and, consequently
$		\left\langle\rho, S\chi\chi^\top S^\top\right\rangle=H$
and we get $	\left\langle\rho, PP^\top \right\rangle=H$,  which recollects the orthogonality of the polynomials $\{P_n(z)\}_{n=0}^\infty$.

\subsection{The Jacobi matrix}Given this construction is natural to introduce the lower Hessenberg semi-infinite matrix
\begin{align}\label{eq:Jacobi}
J=S\Lambda S^{-1}
\end{align}
that has the vector $P(z)$ as eigenvector with eigenvalue $z$, i.e. $JP(z)=zP(z)$.
The Hankel condition of the Gram matrix $\Lambda G=G\Lambda^\top$ together with the Cholesky factorization leads to
$\Lambda S^{-1} H S^{-\top} =S^{-1} H S^{-\top} \Lambda^\top$,
or, equivalently,
$S\Lambda S^{-1} H  =H S^{-\top} \Lambda^\top S^{-\top}$,
i.e., 
\begin{align}\label{eq:symmetry_J}
J H=(JH)^\top =HJ^\top.
\end{align}
That is,  the Hessenberg matrix $JH$ is symmetric, thus being Hessenberg and symmetric we deduce that is tridiagonal. Hence, the Jacobi matrix $J$ given in \eqref{eq:Jacobi} reads
\begin{align*}
J=\left(\begin{NiceMatrix}[columns-width = 0.5cm]
\beta_0 & 1& 0&\Cdots& \\
\gamma_1 &\beta_1 & 1 &\Ddots&\\
0 &\gamma_2 &\beta_2 &1 &\\
\Vdots&\Ddots& \Ddots& \Ddots&\Ddots 
\end{NiceMatrix}\right)
\end{align*}
and the eigenvalue equation $JP=zP$ is a three term recursion relation 
\begin{align*}
zP_n(z)&=P_{n+1}(z)+\beta_n P_n(z)+\gamma_n P_{n-1}(z),
\end{align*}
that with the  initial conditions $P_{-1}=0$ and $P_0=1$ completely determines   the sequence of orthogonal polynomials $\{P_n(z)\}_{n\in\N_0}$ in terms of the recursion coefficients $\beta_n,\gamma_n$.
\begin{pro}
	The recursion coefficients, in terms of the Hankel determinants, are given by
	\begin{align}\label{eq:equations0}
	\beta_n&=p_n^1-p_{n+1}^1=-\frac{\tilde \varDelta_n}{\varDelta_n}+\frac{\tilde \varDelta_{n+1}}{\varDelta_{n+1}},&  \gamma_{n+1}&=\frac{H_{n+1}}{H_{n}}=\frac{\varDelta_{n+1}\varDelta_{n-1}}{\varDelta_n^2},& n\in\N_0,
	\end{align}
\end{pro}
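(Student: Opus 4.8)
The plan is to establish the two identities separately, starting from the monic structure of the orthogonal polynomials and the three term recurrence. First I would focus on $\beta_n$. Since $P(z)=S\chi(z)$, the coefficient $p_n^1$ is precisely $S_{n,n-1}$, the first subdiagonal entry of $S$. Writing out the three term recurrence $zP_n(z)=P_{n+1}(z)+\beta_nP_n(z)+\gamma_nP_{n-1}(z)$ and comparing the coefficient of $z^n$ on both sides (noting that the left side contributes $p_n^1$ shifted up, i.e. the $z^n$ coefficient of $zP_n$ is $p_n^1$ while on the right $P_{n+1}$ contributes $p_{n+1}^1$ and $\beta_nP_n$ contributes $\beta_n$) gives immediately $p_n^1=p_{n+1}^1+\beta_n$, that is $\beta_n=p_n^1-p_{n+1}^1$. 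This is the easy half and requires only bookkeeping of leading coefficients.

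Next I would identify $p_n^1$ with a ratio of Hankel-type determinants. The standard route is Cramer's rule: the orthogonality conditions $\langle\rho,P_n(z)z^k\rangle=0$ for $k=0,\dots,n-1$ form a linear system for the coefficients $p_n^1,\dots,p_n^n$ with the Hankel matrix $G^{[n]}$ (suitably indexed) as coefficient matrix, and right-hand side built from $\rho_n,\dots,\rho_{2n-1}$. Solving for $p_n^1$ by Cramer's rule expresses it as $-\tilde\varDelta_n/\varDelta_n$, where $\tilde\varDelta_n$ is the determinant of the matrix obtained from $G^{[n+1]}$ by deleting the last row and the second-to-last column (this is the natural definition of $\tilde\varDelta_n$ that makes the formula work; I would state it explicitly since the excerpt has not yet defined it). Substituting into $\beta_n=p_n^1-p_{n+1}^1$ then yields the claimed formula $\beta_n=-\tilde\varDelta_n/\varDelta_n+\tilde\varDelta_{n+1}/\varDelta_{n+1}$.

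For $\gamma_{n+1}$ the first equality $\gamma_{n+1}=H_{n+1}/H_n$ follows directly from \eqref{eq:symmetry_J}: comparing the $(n,n+1)$ and $(n+1,n)$ entries of $JH=HJ^\top$ gives $1\cdot H_{n+1}=\gamma_{n+1}\cdot H_n$ (using that the superdiagonal of $J$ is all ones and the subdiagonal entries are the $\gamma$'s), hence $\gamma_{n+1}=H_{n+1}/H_n$. For the second equality I would use the determinantal formula $H_n=\varDelta_{n+1}/\varDelta_n$, which comes from the Cholesky factorization $G=S^{-1}HS^{-\top}$: taking the $(n{+}1)\times(n{+}1)$ principal minor and using $\det S^{[n+1]}=1$ gives $\varDelta_{n+1}=\det G^{[n+1]}=H_0H_1\cdots H_n$, so $H_n=\varDelta_{n+1}/\varDelta_n$. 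Then $\gamma_{n+1}=H_{n+1}/H_n=(\varDelta_{n+2}/\varDelta_{n+1})/(\varDelta_{n+1}/\varDelta_n)=\varDelta_{n+2}\varDelta_n/\varDelta_{n+1}^2$, which after the index shift $n\mapsto n-1$ matches the stated $\varDelta_{n+1}\varDelta_{n-1}/\varDelta_n^2$.

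The only genuinely delicate point is the Cramer's-rule computation of $p_n^1$ and the correct identification of $\tilde\varDelta_n$: one must be careful about which row and column are deleted and about sign conventions, since an off-by-one in the indexing or a stray sign would break the telescoping structure of the $\beta_n$ formula. I expect this determinantal bookkeeping to be the main obstacle; everything else is a direct consequence of the three term recurrence, the symmetry \eqref{eq:symmetry_J}, and the multiplicativity of determinants under the Cholesky factorization.
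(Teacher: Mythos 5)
Your proof is correct and, for the parts the paper actually proves, takes essentially the same route: $\beta_n=p^1_n-p^1_{n+1}$ is obtained by leading-coefficient bookkeeping (the paper reads it off the first subdiagonal of $J=S\Lambda S^{-1}$ via $S_{n,n-1}=p^1_n$, which is the same computation as your comparison of $z^n$ coefficients in the recurrence), and $\gamma_{n+1}=H_{n+1}/H_n$ comes from the symmetry $JH=HJ^\top$ in both arguments. The paper's proof stops there and leaves the Hankel-determinant expressions implicit; your Cramer's-rule identification $p^1_n=-\tilde\varDelta_n/\varDelta_n$ (with $\tilde\varDelta_n$ defined by deleting the last row and penultimate column of $G^{[n+1]}$) and the identity $\varDelta_{n+1}=H_0\cdots H_n$ from the Cholesky factorization supply precisely that standard missing bookkeeping, correctly.
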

\begin{proof}
	For $n\in\N:=\{1,2,\dots\}$, from  $J^\top=H^{-1}J H$, we get that $\gamma_n=\frac{H_n}{H_{n-1}}$. On the other hand, as $J=S\Lambda S^{-1}$ and recalling that  for the coefficients $S_{n,n-1}$ of the first subdiagonal of $S$ we have $S_{n,n-1}=p^1_n$, the first nontrivial leading coefficient of the monic polynomial $P_n$, we get $\beta_n=p^1_n-p^1_{n+1}$.
\end{proof}
It can be easily shown that the second kind functions  also satisfy  the previous three term recursion relation
\begin{align*}
zQ_n(z)&=Q_{n+1}(z)+\beta_nQ_n(z)+\gamma_n Q_{n-1}(z),
\end{align*}
but now with the initial conditions  $Q_{-1}=-H_{-1}$ and $Q_0=S_\rho$ (the Markov--Stieltjes transform of $\rho$), with $\beta_n=q^1_n-q^1_{n-1}$.

For future use we introduce the following diagonal matrices
\begin{align*}
 \gamma&:=\diag (\gamma_1,\gamma_2,\dots),&
\beta&:=\diag(\beta_0 ,\beta_{1},\dots)
\end{align*}
and
\begin{align*}
J_-&:=\Lambda ^\top \gamma, & J_+&:=\beta+\Lambda,
\end{align*}
so that we have the splitting
\begin{align*}
J=\Lambda^\top\gamma+\beta+\Lambda&=J_-+J_+.
\end{align*}
In general, given any semi-infinite matrix $A$, we will write $A=A_-+A_+$, where $A_-$ is a strictly lower triangular matrix and $A_+$ an upper triangular matrix. Moreover, $A_0$ will denote the diagonal part of  $A$.

\subsection{The lower Pascal matrix}
The lower Pascal matrix, built up of  binomial numbers, is defined by
\begin{align}\label{eq:Pascal_matrix}
B&=(B_{n,m}), & B_{n,m}&:= \begin{cases}
\displaystyle \binom{n}{m}, & n\geq m,\\
0, &n<m.
\end{cases}
\end{align}
so that
\begin{align}\label{eq:Pascal}
\chi(z+1)=B\chi(z).	
\end{align}
Moreover,
\begin{align*}
B^{-1}&=(\tilde B_{n,m}), & \tilde B_{n,m}&:= \begin{cases}
(-1)^{n+m}\displaystyle \binom{n}{m}, & n\geq m,\\
0, &n<m.
\end{cases}
\end{align*}
and
\begin{align*}
\chi(z-1)=B^{-1}\chi(z).	
\end{align*}
The lower Pascal matrix and its inverse are explicitly given by
\begin{align*}
B&=\left(\begin{NiceMatrix}[columns-width =auto]
		1&0&\Cdots\\
		1&1&0&\Cdots\\
		1&2&1&0&\Cdots&\\		
		1& 3 & 3&1 & 0&\Cdots\\
		1&4 & 6 & 4 & 1&0&\Cdots\\
		1& 5 & 10 &10 &5&1&0&\Cdots\\
		\Vdots & & & & & &\Ddots&\Ddots
	\end{NiceMatrix}\right),&
	B^{-1}&=\left(\begin{NiceMatrix}[r]
	1&0&\Cdots\\
	-1&1&0&\Cdots\\
	1&-2&1&0&\Cdots&\\		
	-1& 3 & -3&1 & 0&\Cdots\\
	1&-4 & 6 & -4 & 1&0&\Cdots\\
	-1& 5 & -10 &10 &-5&1&0&\Cdots\\
	\Vdots & & & & & &\Ddots&\Ddots
\end{NiceMatrix}\right).
\end{align*}
Let us introduce the lower unitriangular semi-infinite matrices, lets us refer to them as \emph{dressed Pascal matrices,}
\begin{align*}
\Pi&:=SBS^{-1}, & \Pi^{-1}&:=SB^{-1}S^{-1}, 
\end{align*}
that are connection matrices; i.e.,
\begin{align}\label{eq:PascalP}
P(z+1)&=\Pi P(z), & P(z-1)&=\Pi^{-1}P(z).
\end{align}
The lower Pascal matrix can be expressed in terms of its subdiagonal structure as follows
\begin{align*}
B^{\pm 1}&=I\pm\Lambda^\top D+\big(\Lambda^\top\big)^2D^{[2]}\pm\big(\Lambda^\top\big)^3D^{[3]}+\cdots,
\end{align*}
 where the diagonal matrices $D,D^{[k]}$, with $k\in\N$, ($D=D^{[1]}$) are given by
 \begin{align*}
 D&=\diag(1,2,3,\dots),&
 D^{[k]}&=\frac{1}{k}\diag\big(k^{(k)}, (k+1)^{(k)},(k+2)^{(k)}\cdots\big), 
 \end{align*}
 in terms of the falling factorials
$ x^{(k)}=x(x-1)(x-2)\cdots (x-k+1)$.
That is,  
\begin{align*}
D^{[k]}_n&=\frac{(n+k)\cdots (n+1)}{k}, & k&\in\N, & n&\in\N_0.
\end{align*}

The lower unitriangular factor can be also written  in terms of its subdiagonals
\begin{align*}
S=I+\Lambda^\top S^{[1]}+\big(\Lambda^\top\big)^2S^{[2]}+\cdots
\end{align*}
with 
$S^{[k]}=\diag \big(S^{[k]}_0, S^{[k]}_1,\dots\big)$ 
diagonal matrices.  From \eqref{eq:PS} is clear the following connection  between these subdiagonals coefficients and the coefficients of the orthogonal polynomials, given in \eqref{eq:polynomials}, holds
\begin{align}\label{eq:Sp}
	S^{[k]}_k=p^k_{n+k}.
\end{align}


We will use the \emph{shift operators} $T_\pm$ acting over the diagonal matrices as follows
\begin{align*}
T_-\diag(a_0,a_1,\dots)&:=\diag (a_1, a_2,\dots),&
T_+\diag(a_0,a_1,\dots)&:=\diag(0,a_0,a_1,\dots),
\end{align*}
where $T_-$ is the lowering shift operator and $T_+$ the raising shift operator over the diagonal matrices.
These shift operators have the following important properties, for any diagonal matrix $A=\diag(A_0,A_1,\dots)$
\begin{align}\label{eq:ladder_lambda}
	\Lambda A&=(T_-A)\Lambda,&   A	\Lambda &=\Lambda (T_+A), 	& A \Lambda^\top  &=\Lambda^\top(T_-A), &\Lambda^\top  A &= (T_+A)\Lambda ^\top.
\end{align}

\begin{rem}
	Notice that the standard notation, see \cite{NSU}, for the differences of a sequence $\{f_n\}_{n\in\N_0}$,
\begin{align*}
\Delta f_n&:=f_{n+1}-f_n, 
& n&\in\N_0,
\\ 
\nabla f_n&=f_n-f_{n-1}, &n&\in\N,
\end{align*}
and $\nabla f_0= f_0$, connects with the shift operators  by means of
\begin{align*}
T_-&=I+\Delta , & T_+&=I-\nabla.
\end{align*} 
\end{rem}

In terms of these shift operators we find
\begin{align}\label{eq:theDs}
	2D^{[2]}&=(T_-D)D, & 3D^{[3]}&=(T_-^2D)(T_-D)D=2(T_-D^{[2]})D=2D^{[2]}(T_-^2 D)
\end{align}

\begin{pro}\label{pro:Sinv}
The inverse matrix $S^{-1}$ of the matrix $S$  expands in terms of subdiagonals as follows
\begin{align*}
S^{-1}&=I+\Lambda^\top S^{[-1]}+\big(\Lambda^\top\big)^2 S^{[-2]}+\cdots.
\end{align*}
The subdiagonals $S^{[-k]}$ are explicitly given  in terms of  the subdiagonals  of $S$, the first few are
\begin{align*}
 S^{[-1]}&=-S^{[1]}, \\S^{[-2]}&=-S^{[2]}+(T_-S^{[1]})S^{[1]},\\
S^{[-3]}&=-S^{[3]}+(T_-S^{[2]})S^{[1]}
+(T_-^2S^{[1]})S^{[2]}
-(T_-^2 S^{[1]})
(T_-S^{[1]})S^{[1]},\\
S^{[-4]}&=\hspace*{-2pt}\begin{multlined}[t][.95\textwidth]
-S^{[4]}+(T_-S^{[3]})S^{[1]}
+(T_-^2S^{[2]})S^{[2]}-(T_-^2S^{[2]})(T_-S^{[1])})S^{[1]}+(T_-^3S^{[1]})S^{[3]}
\\
-(T_-^3S^{[1]})(T_-S^{[2])})S^{[1]}
-
(T_-^3S^{[1]})(T_-^2S^{[1])})S^{[2]}
+(T_-^3S^{[1]})(T_-^2S^{[1]})(T_-S^{[1])})S^{[1]}.
\end{multlined}
\end{align*}
\end{pro}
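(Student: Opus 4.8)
The plan is to use the fact that any lower unitriangular semi-infinite matrix has a unique expansion in the powers of $\Lambda^\top$ with diagonal coefficients, together with the commutation rules \eqref{eq:ladder_lambda}. First I would write $S=\sum_{k\geq 0}(\Lambda^\top)^kS^{[k]}$ with $S^{[0]}=I$, as given, and observe that $S^{-1}$, being again lower unitriangular, likewise admits a unique expansion $S^{-1}=\sum_{k\geq 0}(\Lambda^\top)^kS^{[-k]}$ with $S^{[-0]}=I$ and each $S^{[-k]}$ a diagonal matrix; these diagonal matrices are the unknowns. Both expansions make sense entrywise because $(\Lambda^\top)^k$ is supported on the $k$-th subdiagonal, so only finitely many terms contribute to a given entry.

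Next I would iterate the identity $A\Lambda^\top=\Lambda^\top(T_-A)$ from \eqref{eq:ladder_lambda} to obtain $A(\Lambda^\top)^b=(\Lambda^\top)^b(T_-^bA)$ for every diagonal matrix $A$ and every $b\in\N_0$, and then substitute the two expansions into $SS^{-1}=I$, moving each diagonal factor to the right past the powers of $\Lambda^\top$:
\begin{align*}
I=SS^{-1}=\sum_{a,b\geq 0}(\Lambda^\top)^{a}S^{[a]}(\Lambda^\top)^{b}S^{[-b]}=\sum_{m\geq 0}(\Lambda^\top)^{m}\sum_{b=0}^{m}\big(T_-^{b}S^{[m-b]}\big)S^{[-b]}.
\end{align*}
By uniqueness of the subdiagonal expansion the coefficient of $(\Lambda^\top)^{m}$ must vanish for $m\geq 1$; isolating the $b=m$ term (for which $T_-^{m}S^{[0]}=I$) yields the recursion
\begin{align*}
S^{[-m]}=-\sum_{b=0}^{m-1}\big(T_-^{b}S^{[m-b]}\big)S^{[-b]},\qquad m\in\N,
\end{align*}
with initial value $S^{[-0]}=I$.

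Finally I would unwind this recursion. For $m=1$ it gives $S^{[-1]}=-S^{[1]}$ at once; substituting back for $m=2$ gives $S^{[-2]}=-S^{[2]}+(T_-S^{[1]})S^{[1]}$; and for $m=3,4$ one inserts the already-computed $S^{[-1]},S^{[-2]},S^{[-3]}$ and collects terms, reproducing the displayed formulas. The argument is essentially routine; the only point that needs care — hence the main (mild) obstacle — is the bookkeeping of the shift operators, i.e.\ keeping track of the correct powers $T_-^{b}$ generated when commuting diagonal blocks through $(\Lambda^\top)^b$, and verifying that the nested $T_-$'s in the $m=4$ line appear exactly as stated. An equivalent route avoiding the recursion is to write $S^{-1}=(I+N)^{-1}=\sum_{j\geq 0}(-1)^jN^j$ with $N=\sum_{k\geq 1}(\Lambda^\top)^kS^{[k]}$ and read off the $(\Lambda^\top)^m$ component with the same commutation rule; this gives the closed sum over compositions $k_1+\dots+k_j=m$ with $k_i\geq 1$ of $(-1)^j\big(T_-^{k_2+\dots+k_j}S^{[k_1]}\big)\cdots\big(T_-^{k_j}S^{[k_{j-1}]}\big)S^{[k_j]}$, of which the stated cases are the instances $m\leq 4$.
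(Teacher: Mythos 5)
Your proposal is correct: the recursion $S^{[-m]}=-\sum_{b=0}^{m-1}\big(T_-^{b}S^{[m-b]}\big)S^{[-b]}$ obtained from equating subdiagonal coefficients in $SS^{-1}=I$ via $A\big(\Lambda^\top\big)^b=\big(\Lambda^\top\big)^b\big(T_-^bA\big)$ reproduces exactly the displayed expressions for $S^{[-1]},\dots,S^{[-4]}$, and the closed sum over compositions is a valid alternative. The paper states Proposition \ref{pro:Sinv} without proof, and your argument is precisely the computation the paper itself uses in neighbouring proofs (for $J=S\Lambda S^{-1}$ and the dressed Pascal matrices), so it fills the omission in the intended way.
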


\begin{pro}
	The following  \emph{nonlocal} expressions for the polynomial coefficients in terms of the recursion coefficients  hold true
\begin{align}\label{eq:poly_coff_recursion}
	p^1_{n+1}&=-\sum_{k=0}^n\beta_k, & p^2_{n+1}=-\sum_{k=1}^n\gamma_{k}+ \sum_{0\leq l <k< n}\beta_k\beta_l.
\end{align}
Moreover, 
\begin{align}\label{eq:poly_recursion_coff_3}
	p^3_{n+2}-p^3_{n+3}=\gamma_{n+2}p^1_{n+1}+(\beta_{n+2}+\beta_{n+1}+\beta_n)p^2_{n+2}-
	(\beta_{n+1}+\beta_n)p^1_{n+2}p^1_{n+1}.
\end{align}
\end{pro}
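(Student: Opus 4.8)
The plan is to read everything off the three-term recurrence $zP_n(z)=P_{n+1}(z)+\beta_nP_n(z)+\gamma_nP_{n-1}(z)$ by inserting the monic expansions $P_n(z)=z^n+p_n^1z^{n-1}+p_n^2z^{n-2}+p_n^3z^{n-3}+\cdots$ and comparing the coefficients of $z^n$, $z^{n-1}$ and $z^{n-2}$ on the two sides. Comparing $z^n$ reproduces $\beta_n=p_n^1-p_{n+1}^1$ already recorded in \eqref{eq:equations0}; comparing $z^{n-1}$ gives the first-order difference relation $p_n^2-p_{n+1}^2=\gamma_n+\beta_np_n^1$; and comparing $z^{n-2}$ gives $p_n^3-p_{n+1}^3=\gamma_np_{n-1}^1+\beta_np_n^2$. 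Together with the normalizations $p_0^1=p_0^2=p_1^2=p_2^3=0$ coming from \eqref{eq:polynomials} (more generally $p_n^k=0$ for $k>n$) and the convention $\gamma_0:=0$, these three relations carry all the needed information.

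For the first identity in \eqref{eq:poly_coff_recursion} one telescopes $\beta_k=p_k^1-p_{k+1}^1$ from $k=0$ to $k=n$ and uses $p_0^1=0$. For the second, telescope $p_k^2-p_{k+1}^2=\gamma_k+\beta_kp_k^1$ over the same range: the diagonal part contributes $-\sum_{k=1}^n\gamma_k$ (the $k=0$ term is absent because $\gamma_0=0$), while the cross term, after substituting the closed form $p_k^1=-\sum_{0\le l<k}\beta_l$ just obtained, produces $\sum_{k=1}^n\beta_k\sum_{0\le l<k}\beta_l$, i.e. the double sum over $0\le l<k$ displayed in \eqref{eq:poly_coff_recursion} (again the $k=0$ term drops since $p_0^1=0$).

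For \eqref{eq:poly_recursion_coff_3} there is no telescoping: one simply evaluates the $z^{n-2}$ comparison at index $n+2$, getting $p_{n+2}^3-p_{n+3}^3=\gamma_{n+2}p_{n+1}^1+\beta_{n+2}p_{n+2}^2$, and then has to rewrite the right-hand side in the displayed form, re-expanding $\beta_{n+2}$ into $(\beta_{n+2}+\beta_{n+1}+\beta_n)-(\beta_{n+1}+\beta_n)$ and using the lower-order data — $\beta_{n+1}+\beta_n=p_n^1-p_{n+2}^1$ from the $z^n$ comparison and the relation between $p^2$-differences and products of consecutive $p^1$'s coming from the $z^{n-1}$ comparison — to process the excess term $(\beta_{n+1}+\beta_n)p_{n+2}^2$ into $(\beta_{n+1}+\beta_n)p_{n+2}^1p_{n+1}^1$. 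I expect this last regrouping to be the only real obstacle: unlike the clean telescopings behind \eqref{eq:poly_coff_recursion}, it forces one to reinsert the recursion coefficients at several shifted indices, and this is precisely where range and sign errors are most likely, so one must keep close track of which $\beta_k$ and which shift of $p^1,p^2$ occur at each place.

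As an aside, the whole statement can also be obtained at the matrix level, sidestepping the recurrence: equating successive subdiagonals in $S\Lambda=JS$ with $J=\Lambda^\top\gamma+\beta+\Lambda$, using the expansion $S=I+\Lambda^\top S^{[1]}+(\Lambda^\top)^2S^{[2]}+\cdots$, the identification $S_n^{[k]}=p_{n+k}^k$ from \eqref{eq:Sp} and the ladder relations \eqref{eq:ladder_lambda}, reproduces exactly the same three first-order difference relations and hence the same closed forms.
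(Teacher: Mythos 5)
Your treatment of \eqref{eq:poly_coff_recursion} is correct and is in substance the paper's own argument: comparing coefficients of $z^{n}$ and $z^{n-1}$ in the three-term recurrence is exactly equivalent to the paper's comparison of the diagonal and first subdiagonal in $J=S\Lambda S^{-1}$ via $S^{[k]}_n=p^k_{n+k}$, followed by the same telescoping. One remark: your telescoping gives the double sum over $0\le l<k\le n$, which is the correct range (check $n=1$: $p^2_2=\beta_0\beta_1-\gamma_1$), so the range $0\le l<k<n$ printed in \eqref{eq:poly_coff_recursion} should be read as a typo that your computation silently corrects.

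For \eqref{eq:poly_recursion_coff_3}, however, your proof stops exactly at the step that cannot be done, and you should not have left it as an ``expected regrouping.'' Your intermediate identity $p^3_{n+2}-p^3_{n+3}=\gamma_{n+2}p^1_{n+1}+\beta_{n+2}p^2_{n+2}$ is correct, and passing from it to the displayed \eqref{eq:poly_recursion_coff_3} would require
\begin{align*}
(\beta_{n+1}+\beta_n)\bigl(p^2_{n+2}-p^1_{n+2}p^1_{n+1}\bigr)=0,
\end{align*}
which is false in general: already for $n=0$ one has $p^2_2-p^1_2p^1_1=-\gamma_1-\beta_0^2$. Concretely, for the Charlier-type data $\beta_n=n+1$, $\gamma_n=n$ one finds $P_3=z^3-6z^2+8z-1$, $P_4=z^4-10z^3+29z^2-24z+1$, so at $n=1$ the left-hand side of \eqref{eq:poly_recursion_coff_3} is $23$ while its right-hand side is $-9+72-90=-27$; your relation gives $4\cdot 8+3\cdot(-3)=23$. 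So the printed \eqref{eq:poly_recursion_coff_3} is itself erroneous, and the discrepancy can be traced to the paper's own derivation: in the expansion of $S\Lambda S^{-1}$ one has $\Lambda^\top S^{[1]}\Lambda^\top S^{[-2]}=(\Lambda^\top)^2(T_-S^{[1]})S^{[-2]}$, so the term $(T_+S^{[1]})S^{[-2]}$ in \eqref{eq:S3} should be $(T_-S^{[1]})S^{[-2]}$; with that correction the paper's matrix route reproduces precisely your relation $p^3_{n+2}-p^3_{n+3}=\gamma_{n+2}p^1_{n+1}+\beta_{n+2}p^2_{n+2}$. In short: your argument is sound up to and including that identity, but as a proof of the statement as printed it has a gap, and the honest conclusion is that the final displayed formula needs to be corrected rather than reached by further manipulation.
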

\begin{proof}We have 
\begin{align*}
	J&=S\Lambda S^{-1}=(I+\Lambda^\top S^{[1]}+\big(\Lambda^\top\big)^2 S^{[2]}+\cdots)\Lambda
(	I+\Lambda^\top S^{[-1]}+\big(\Lambda^\top\big)^2 S^{[-2]}+\cdots)\\&
=\begin{multlined}[t][0.9\textwidth]
	\Lambda+T_+S^{[1]}+S^{[-1]}+ \Lambda^\top(T_+S^{[2]}+S^{[-2]}+S^{[1]}S^{[-1]})\\+(\Lambda^\top)^2\big(T_+S^{[3]}+S^{[-3]}+S^{[2]}S^{[-1]}
+(T_+S^{[1]})S^{[-2]}\big)+\cdots.
\end{multlined}
\end{align*}
Thus, we obtain
\begin{align}
\label{eq:Sbeta}	\beta&=T_+S^{[1]}-S^{[1]},\\
\label{eq:Sgamma}	\gamma &=T_+S^{[2]}-S^{[2]}+(T_-S^{[1]}-S^{[1]})S^{[1]},
	\end{align}
and an infinite set of relations among  subdiagonals of $S$, being the first
\begin{align}\label{eq:S3}
	T_+S^{[3]}+S^{[-3]}+S^{[2]}S^{[-1]}+(T_+S^{[1]})S^{[-2]}=0.
\end{align}
The relation \eqref{eq:Sbeta} component wise is\eqref{eq:equations0}.
As well, component wise,  the relation \eqref{eq:Sgamma} is
\begin{align*}
	\gamma_{n+1}&=S_{n-1}^{[2]}-S_n^{[2]}+(S_{n+1}^{[1]}-S_n^{[1]})S_n^{[1]}
	=p^2_{n+1}-p^2_{n+2}-\beta_{n+1} p^1_{n+1}.
\end{align*}
Hence, using telescoping series again, we find \eqref{eq:poly_coff_recursion}.
Finally, Equation  \eqref{eq:S3} reads
\begin{multline*}
	T_+S^{[3]}-S^{[3]}+(T_-S^{[2]})S^{[1]}
+(T_-^2S^{[1]})S^{[2]}
-(T_-^2 S^{[1]})
(T_-S^{[1]})S^{[1]}-S^{[2]}S^{[1]}
\\+(T_+S^{[1]})\big(-S^{[2]}+(T_-S^{[1]})S^{[1]}\big)=0,
\end{multline*}
that we can write
\begin{multline*}
	T_+S^{[3]}-S^{[3]}+T_-\big(S^{[2]}-T_+S^{[2]}-	(T_-S^{[1]}-S^{[1]})S^{[1]}\big)S^{[1]}
	+(T_-^2S^{[1]}-T_+S^{[1]})S^{[2]}
	\\+(T_+S^{[1]}-T_-S^{[1]})(T_-S^{[1]})S^{[1]}=0,
\end{multline*}
so that
\begin{align*}
	T_+S^{[3]}-S^{[3]}=(T_-\gamma )S^{[1]}
	+(T_-^2\beta+T_-\beta+\beta)S^{[2]}-(T_-\beta+\beta)(T_-S^{[1]})S^{[1]}=0,
\end{align*}
and component wise we have \eqref{eq:poly_recursion_coff_3}.
\end{proof}
\begin{rem}
In \eqref{eq:poly_recursion_coff_3} we can sum up on the LHS,  observe that is a telescoping series,  to get a nonlocal  nonlinear expression, in terms of the recursion coefficients, for $p^3_n$. A similar statement  holds for every  $p^k_n$ with $k=4,5,\dots$.
\end{rem}

Now, we turn our attention to the dressed Pascal matrix. 
We also expand the dressed Pascal matrices  into subdiagonals
\begin{align*}
\Pi^{\pm 1}=I+\Lambda^\top \pi^{[\pm 1]}+(\Lambda^\top)^2  \pi^{[\pm 2]}+\cdots
\end{align*}
with $\pi^{[\pm n]}=\diag(\pi^{[\pm n]}_0,\pi^{[\pm n]}_1,\dots)$. Then, for these subdiagonals we  find

\begin{pro}[The dressed Pascal matrix coefficients]
	We have
	\begin{gather}\label{eq:pis}
	\begin{aligned}
	\pi^{[\pm 1]}_n&=\pm (n+1), &
	\pi^{[\pm 2]}_n&=\frac{(n+2)(n+1)}{2}\pm
	p^1_{n+2}(n+1)\mp (n+2) p^{1}_{n+1}\\&&&=\frac{(n+2)(n+1)}{2}\mp (n+1)\beta_{n+1}
	\mp  p^{1}_{n+1},
	\end{aligned}\\\label{eq:piss}
	\begin{multlined}[t][\textwidth]
	\pi^{[\pm 3]}_n=\pm\frac{(n+3)(n+2)(n+1)}{3}+\frac{(n+2)(n+1)}{2}p^1_{n+3}-
	\frac{(n+3)(n+2)}{2}p^1_{n+1}\\\pm (n+1) p^2_{n+3}\mp (n+3)p^2_{n+2}\pm
	(n+3)p^1_{n+2}p^1_{n+1}\mp(n+2)p^1_{n+3}p^1_{n+1}.\end{multlined}
	\end{gather}
Moreover, the following relations are fulfill
	\begin{gather}\label{eq:pis2}
	\begin{aligned}
		\pi^{[1]}+\pi^{[-1]}&=0, &\pi^{[2]}+\pi^{[-2]}&=2D^{[2]}, &\pi^{[3]}+\pi^{[-3]}&=2((T_-^2S^{[1]})D^{[2]}-(T_-D^{[2]})S^{[1]}),
	\end{aligned}\\\notag
	\begin{aligned}
			\pi^{[1]}-\pi^{[-1]}&=2 D, &\pi^{[2]}-\pi^{[-2]}&=2((T_-S^{[1]})D- (T_-D) S^{[1]}),
	\end{aligned}\\\notag
\pi^{[3]}-\pi^{[-3]}= 2\big(D^{[3]}
+ (T_-S^{[2]})D- (T_-^2D)S^{[2]}+(T_-^2D)(T_-S^{[1]})S^{[1]}- (T_-^2S^{[1]})(T_-D)S^{[1]}\big).
	\end{gather}
\end{pro}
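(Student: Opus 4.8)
The plan is to compute the subdiagonal expansions of $\Pi^{\pm1}=SB^{\pm1}S^{-1}$ directly, by multiplying the known series for $S$, for $B^{\pm1}$, and for $S^{-1}$ (the latter from Proposition \ref{pro:Sinv}), and then reading off the coefficient of $(\Lambda^\top)^k$ for $k=1,2,3$. For this I would repeatedly use the commutation rules \eqref{eq:ladder_lambda}, which let me push all the $\Lambda^\top$ factors to the left while converting each diagonal matrix $A$ that moves past $j$ copies of $\Lambda^\top$ into $T_+^j A$ (or, symmetrically, $T_-^j A$ when $\Lambda^\top$ moves past it). Concretely, writing $S=I+\Lambda^\top S^{[1]}+(\Lambda^\top)^2S^{[2]}+\cdots$ and $B^{\pm1}=I\pm\Lambda^\top D+(\Lambda^\top)^2D^{[2]}\pm\cdots$ and $S^{-1}=I+\Lambda^\top S^{[-1]}+\cdots$, the product $SB^{\pm1}S^{-1}$ is a triple Cauchy product in the variable $\Lambda^\top$; collecting the coefficient of $(\Lambda^\top)^k$ gives $\pi^{[\pm k]}$ as a sum over compositions $k=a+b+c$ of terms $T_+^{?}S^{[a]}\cdot T_+^{?}(\pm\text{-factor of }B)\cdot S^{[-c]}$ with the appropriate shift powers dictated by how many $\Lambda^\top$'s sit to the right.

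The first two lines, $\pi^{[\pm1]}_n=\pm(n+1)$ and the formula for $\pi^{[\pm2]}_n$, are then immediate: for $k=1$ only the composition $(0,1,0)$ contributes, giving $\pm D$, whose $n$-th entry is $\pm(n+1)$; for $k=2$ the contributing compositions are $(0,2,0),(1,1,0),(0,1,1)$ plus $(1,0,1),(2,0,0),(0,0,2)$ from the identity pieces — but since $B^{\pm1}$ has no $(\Lambda^\top)^0$ beyond $I$ in the relevant slot one keeps only those with a genuine $B$-contribution of total degree matching, yielding $\pm(T_+S^{[1]})D \pm D S^{[-1]} + D^{[2]}$ (using $S^{[-1]}=-S^{[1]}$), and then the stated rewriting in terms of $\beta$ and $p^1$ follows from \eqref{eq:equations0} and \eqref{eq:Sp}. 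For $\pi^{[\pm3]}$ one carries the same bookkeeping one step further, now with compositions of $3$; substituting $S^{[-1]}=-S^{[1]}$, $S^{[-2]}=-S^{[2]}+(T_-S^{[1]})S^{[1]}$ from Proposition \ref{pro:Sinv}, and using \eqref{eq:theDs} to simplify the $D^{[k]}$ products, produces \eqref{eq:piss} after expressing everything via $p^1,p^2$ through \eqref{eq:Sp}.

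For the relations \eqref{eq:pis2}, the clean route is to work with $\Pi^{\pm1}$ abstractly rather than component-wise. Note $\Pi\Pi^{-1}=I$ and $\Pi+\Pi^{-1}=S(B+B^{-1})S^{-1}$, $\Pi-\Pi^{-1}=S(B-B^{-1})S^{-1}$; from the $B^{\pm1}$ expansion, $B+B^{-1}=2I+2(\Lambda^\top)^2D^{[2]}+\cdots$ has only even-degree subdiagonals and $B-B^{-1}=2\Lambda^\top D+2(\Lambda^\top)^3D^{[3]}+\cdots$ only odd ones. Conjugating by $S$ and extracting the coefficient of each $(\Lambda^\top)^k$, one gets $\pi^{[k]}+\pi^{[-k]}$ (resp.\ $\pi^{[k]}-\pi^{[-k]}$) as the $S$-dressed version of the corresponding $B\pm B^{-1}$ coefficient; for $k=1$ this gives $\pi^{[1]}+\pi^{[-1]}=0$ and $\pi^{[1]}-\pi^{[-1]}=2D$ at once, and for $k=2,3$ one expands $S(\cdot)S^{-1}$ through the relevant order, uses $S^{[-1]}=-S^{[1]}$, and simplifies with \eqref{eq:theDs}; e.g.\ the $k=2$ even relation reduces to $\pi^{[2]}+\pi^{[-2]}=2D^{[2]}$ because the $S$-dressing of a term sitting two diagonals down with no lower-degree partner is inert, while the $k=2$ odd relation picks up the commutator-type term $2((T_-S^{[1]})D-(T_-D)S^{[1]})$ from moving $D$ past one $\Lambda^\top$ on each side.

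The main obstacle is purely organizational: keeping the shift operators $T_\pm$ attached to the correct factors through a three-fold noncommutative product, since a diagonal matrix picks up a $T_+$ each time an $\Lambda^\top$ from the left passes it and a $T_-$ each time it passes an $\Lambda^\top$ to its right, and the $D^{[k]}$ identities \eqref{eq:theDs} must be invoked at exactly the right moment to collapse the falling-factorial products into the compact form shown. No genuinely new idea is needed beyond \eqref{eq:ladder_lambda}, \eqref{eq:theDs}, Proposition \ref{pro:Sinv}, and \eqref{eq:Sp}; the content is a careful but routine expansion.
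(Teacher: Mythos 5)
Your overall route is the same as the paper's: expand the triple product $\Pi^{\pm1}=SB^{\pm1}S^{-1}$ in powers of $\Lambda^\top$ using \eqref{eq:ladder_lambda}, eliminate the $S^{[-k]}$ via Proposition \ref{pro:Sinv}, and translate component-wise through \eqref{eq:Sp} and \eqref{eq:equations0}; the relations \eqref{eq:pis2} then come out by adding and subtracting the two sign choices (your parity splitting of $B\pm B^{-1}$ is the same computation packaged slightly differently, since the $b=0$ blocks cancel because $SS^{-1}=I$).

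However, the one explicit coefficient you write down is wrong, and the error is precisely in the bookkeeping you identify as the main obstacle. When all powers of $\Lambda^\top$ are collected on the \emph{left}, each diagonal factor acquires $T_-$ (one for every $\Lambda^\top$ that passes it from the right), not $T_+$: from \eqref{eq:ladder_lambda}, $A\Lambda^\top=\Lambda^\top(T_-A)$. Hence the degree-two coefficient is
\begin{align*}
\pi^{[\pm2]}=D^{[2]}\pm(T_-S^{[1]})D\pm(T_-D)S^{[-1]}=D^{[2]}\pm(T_-S^{[1]})D\mp(T_-D)S^{[1]},
\end{align*}
whose $n$-th entry is $\tfrac{(n+2)(n+1)}{2}\pm(n+1)p^1_{n+2}\mp(n+2)p^1_{n+1}$, as in \eqref{eq:pis}. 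Your expression $\pm(T_+S^{[1]})D\pm DS^{[-1]}+D^{[2]}$ instead gives $\tfrac{(n+2)(n+1)}{2}\pm(n+1)p^1_{n}\mp(n+1)p^1_{n+1}$, which does not reproduce \eqref{eq:pis}; the $T_+$'s belong to the analogous expansion of $J=S\Lambda S^{-1}$ (where $\Lambda$, not $\Lambda^\top$, is moved), not here. The same correction must be carried into your degree-three computation, where the correct coefficient is the one in \eqref{eq:Pi3}. With the shifts fixed, the rest of your plan (substitution of $S^{[-1]},S^{[-2]}$, use of \eqref{eq:theDs}, and the sum/difference identities) goes through exactly as in the paper.
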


\begin{proof}
	From \eqref{eq:ladder_lambda} we get
	{\small\begin{align*}\hspace*{-2cm}
			\Pi^{\pm 1}&= \begin{multlined}[t][\textwidth]
				(I+\Lambda^\top S^{[1]}+\big(\Lambda^\top\big)^2S^{[2]}+\cdots )\big(I\pm\Lambda^\top D+\big(\Lambda^\top\big)^2D^{[2]}\pm\big(\Lambda^\top\big)^3D^{[3]}+\cdots\big) (I+\Lambda^\top S^{[-1]}+\big(\Lambda^\top\big)^2 S^{[-2]}+\cdots )
			\end{multlined}\\&=
			\begin{multlined}[t][\textwidth]I+(I+\Lambda^\top S^{[1]}+\big(\Lambda^\top\big)^2S^{[2]}+\cdots )\big(\pm\Lambda^\top D+\big(\Lambda^\top\big)^2D^{[2]}\pm\big(\Lambda^\top\big)^3D^{[3]}+\cdots\big)(I+\Lambda^\top S^{[-1]}+\big(\Lambda^\top\big)^2 S^{[-2]}+\cdots )\end{multlined}
	\end{align*}}
	so that
	{\small\begin{multline}\label{eq:Pi}
			\Pi^{\pm 1}=
			I\pm\Lambda ^\top D+\big(\Lambda^\top\big)^2\big(D^{[2]}\pm
			(T_-S^{[1]})D\pm (T_-D) S^{[-1]}\big)+
			(\Lambda^\top)^3\big(\pm D^{[3]}+(T_-^2S^{[1]})D^{[2]}+(T_-D^{[2]})S^{[-1]}
			\\\pm (T_-S^{[2]})D\pm (T_-^2D)S^{[-2]}\pm (T_-^2S^{[1]})(T_-D)S^{[-1]}\big)+\cdots.
	\end{multline}}
From \eqref{eq:Pi} and Proposition \ref{pro:Sinv} we  obtain
\begin{align}
\label{eq:pP1}\pi^{[\pm 1]}=&\pm D, \\
\label{eq:Pi2}
\pi^{[\pm 2]}=&D^{[2]}\pm
(T_-S^{[1]})D\mp (T_-D) S^{[1]},\\
\label{eq:Pi3}
\pi^{[\pm 3]}=&\begin{multlined}[t][.8\textwidth]\pm D^{[3]}+(T_-^2S^{[1]})D^{[2]}-(T_-D^{[2]})S^{[1]}
\pm (T_-S^{[2]})D\mp (T_-^2D)S^{[2]}\\ \pm (T_-^2D)(T_-S^{[1]})S^{[1]}\mp (T_-^2S^{[1]})(T_-D)S^{[1]}.
\end{multlined}
\end{align}
That component wise gives the desired result once we use   the expressions  for the $\beta$'s in \eqref{eq:equations0}.
\end{proof}


\begin{pro}
For any polynomial  $R(z)$ we have
\begin{align}
	\notag
	R(\Lambda)B^{\pm1}&=B^{\pm 1}R(\Lambda\pm I),&B^{\pm 1}R(\Lambda)&=R(\Lambda\mp I)B^{\pm 1},
	\\\label{eq:JPi}
R(J)\Pi^{\pm1}&=\Pi^{\pm 1}R(J\pm I), & \Pi^{\pm 1}R(J)&=R(J\mp I)\Pi^{\pm1}.
\end{align}
\end{pro}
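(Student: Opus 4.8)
The plan is to derive everything from a single generating relation between $\Lambda$ and $B^{\pm 1}$, then bootstrap to arbitrary polynomials, and finally transport the result to $J$ and $\Pi^{\pm 1}$ by conjugation with $S$. First I would act on the vector of monomials. Since $\Lambda\chi(z)=z\chi(z)$ and, by \eqref{eq:Pascal}, $B^{\pm 1}\chi(z)=\chi(z\pm 1)$, one computes
\begin{align*}
\Lambda B^{\pm 1}\chi(z)=\Lambda\chi(z\pm 1)=(z\pm 1)\chi(z\pm 1)=(z\pm 1)B^{\pm 1}\chi(z)=B^{\pm 1}(\Lambda\pm I)\chi(z).
\end{align*}
Because $\Lambda$, $B^{\pm1}$ and their products have finitely supported rows, an identity $A_1\chi(z)=A_2\chi(z)$ valid for all $z$ forces $A_1=A_2$ (row by row, each matrix entry being recovered as a Taylor coefficient of a polynomial in $z$); hence $\Lambda B^{\pm 1}=B^{\pm 1}(\Lambda\pm I)$.

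Next I would iterate this generating relation. By a straightforward induction on $k$ one gets $\Lambda^{k}B^{\pm 1}=B^{\pm1}(\Lambda\pm I)^{k}$ for all $k\in\N_0$, and summing against the coefficients of $R$ gives $R(\Lambda)B^{\pm 1}=B^{\pm1}R(\Lambda\pm I)$, which is the first stated identity. The companion identity $B^{\pm 1}R(\Lambda)=R(\Lambda\mp I)B^{\pm 1}$ then follows by applying the first one to the shifted polynomial $\widetilde R(z):=R(z\mp 1)$, since $\widetilde R(\Lambda\pm I)=R(\Lambda)$ and $\widetilde R(\Lambda)=R(\Lambda\mp I)$.

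Finally I would conjugate by $S$. From $J=S\Lambda S^{-1}$ and $\Pi^{\pm1}=SB^{\pm 1}S^{-1}$ one has $R(J)=SR(\Lambda)S^{-1}$ and $R(J\pm I)=SR(\Lambda\pm I)S^{-1}$ (conjugation commutes with polynomial functional calculus and fixes $I$); substituting into the $\Lambda$-identities and cancelling the inner $S^{-1}S$ yields $R(J)\Pi^{\pm1}=\Pi^{\pm 1}R(J\pm I)$ and $\Pi^{\pm 1}R(J)=R(J\mp I)\Pi^{\pm1}$. The only point requiring a word of care is that all the semi-infinite products above are well defined and associative; this is guaranteed by the lower-triangular structure of $B^{\pm1}$, $S^{\pm1}$, $\Pi^{\pm 1}$ and the lower Hessenberg (hence finite-band above the diagonal) structure of $J$, so every entry of every product is a finite sum. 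Beyond this bookkeeping there is no real obstacle: the heart of the argument is the one-line computation on $\chi(z)$.
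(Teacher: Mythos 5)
Your proof is correct and follows essentially the same route as the paper: act on $\chi(z)$ using $\Lambda\chi=z\chi$ and $B^{\pm1}\chi(z)=\chi(z\pm1)$, deduce the matrix identity, and transfer it to $J$ and $\Pi^{\pm1}$ by conjugation with $S$. The only differences are cosmetic — you build up from $\Lambda^k$ by induction and spell out why equality on all $\chi(z)$ forces equality of the semi-infinite matrices, whereas the paper computes directly with $R(\Lambda)$ and leaves that point implicit.
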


\begin{proof}

		The compatibility condition of 
	\begin{align*}
\left\{\begin{aligned}
	B^{\pm 1}\chi(z)&=\chi(z\pm 1),\\
	R(\Lambda)\chi(z)&=R(z)\chi(z).
\end{aligned}\right.
	\end{align*}
	reads,
	\begin{align*}
R	(\Lambda) B^{\pm 1}\chi(z)=	R(\Lambda) \chi(z\pm 1)=R(z\pm 1)\chi(z\pm 1)=R(z\pm 1)B^{\pm 1}\chi (z)=B^{\pm 1}R(\Lambda\pm I)\chi(z),
\end{align*}
so that 	
$R(\Lambda)B^{\pm1}=B^{\pm 1}R(\Lambda\pm I)$ and, consequently, $B^{\pm 1}R(\Lambda)=R(\Lambda\mp I)B^{\pm 1}$. Finally, a similarity transformation  $\Lambda=S^{-1} J S$ gives the result.
\end{proof}
\section{Discrete orthogonal polynomials and discrete Pearson equations}

 \subsection{Discrete Pearson equation}
We now assume that the functional is a sum of Dirac delta functions supported $\N_0$, 
\begin{align*}
	\rho=\sum_{k=0}^\infty \delta(z-k) w(k).
\end{align*}
for some weight function $w(z)$ with finite values $w(k)$ for $k\in\N_0$. Hence, the bilinear form is
$\langle F, G\rangle=\sum_{k=0}^\infty F(k)G(k)w(k) $.
The moments are
\begin{align}\label{eq:moments}
\rho_n=\sum_{k=0}^\infty k^n w(k),
\end{align}
and, in particular,  the $0$-th moment reads as follows
\begin{align}\label{eq:first_moment}
\rho_0=\sum_{k=0}^\infty w(k).
\end{align}

We will study   families of weights are those 
 satisfying the following  \emph{discrete Pearson equation}
\begin{align}\label{eq:Pearson0}
\nabla (\sigma w)&=\tau w,
\end{align}
 that is $\sigma(k) w(k)-\sigma(k-1) w(k-1)=\tau(k)w(k)$, for $k\in\{1,2,\dots\}$, 
with $\sigma(z),\tau(z)\in\R[z]$ polynomials.
If we write  $\theta:=\sigma-\tau$, the previous Pearson equation reads
\begin{align}\label{eq:Pearson}
\theta(k+1)w(k+1)&=\sigma(k)w(k), &
k\in\N_0.
\end{align}
\begin{theorem}[Hypergeometric symmetry of the moment matrix]\label{teo:symmetry_Gram}
	Let the weight $w$ be subject to the discrete Pearson equation  \eqref{eq:Pearson}, where $\theta,\sigma$  are  polynomials with  $\theta(0)=0$. Then, the corresponding moment matrix fulfills
	\begin{align}\label{eq:Gram symmetry}
	\theta(\Lambda)G=B\sigma(\Lambda)GB^\top.
	\end{align}
\end{theorem}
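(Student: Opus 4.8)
The plan is to translate the scalar Pearson equation \eqref{eq:Pearson} into a semi-infinite matrix identity by testing both sides against the monomials $z^n$ and resumming. First I would write the Pearson equation in the form $\theta(k+1)w(k+1) = \sigma(k)w(k)$ valid for all $k\in\N_0$, and observe that, because $\theta(0)=0$, the left-hand side also holds trivially at $k=-1$ in the sense that there is no boundary term obstructing a shift of the summation index. The key computation is to evaluate the $(n,m)$ entry of $\theta(\Lambda)G$. Since $\Lambda\chi(z)=z\chi(z)$ and $G=\langle\rho,\chi\chi^\top\rangle$, for any polynomial $R$ we have $R(\Lambda)G = \langle\rho, R(z)\chi(z)\chi(z)^\top\rangle$, i.e. entrywise $\big(R(\Lambda)G\big)_{n,m}=\sum_{k\ge 0} R(k)\,k^n k^m w(k)$. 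Applying this with $R=\theta$ gives $\big(\theta(\Lambda)G\big)_{n,m}=\sum_{k\ge 0}\theta(k)k^{n+m}w(k)$.

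Next I would manipulate this sum using the Pearson equation. Because $\theta(0)=0$, the $k=0$ term vanishes, so $\sum_{k\ge 0}\theta(k)k^{n+m}w(k)=\sum_{k\ge 1}\theta(k)k^{n+m}w(k)$; shifting $k\mapsto k+1$ and using $\theta(k+1)w(k+1)=\sigma(k)w(k)$ converts this into $\sum_{k\ge 0}(k+1)^{n+m}\sigma(k)w(k)$. Now I recognize $(k+1)^{n+m}$ as the $(n+m)$-entry obtained from $\chi(z+1)=B\chi(z)$: indeed $\chi(k+1)=B\chi(k)$ means $(k+1)^n=\sum_j B_{n,j}k^j$, so $(k+1)^{n+m}$ pairs with the product $B\otimes B$ acting on the $\chi\chi^\top$ structure. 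More cleanly, $\big(B\sigma(\Lambda)GB^\top\big)_{n,m}=\big\langle\rho, \big(B\chi\big)_n \sigma(z)\big(B\chi\big)_m\big\rangle=\sum_{k\ge 0}(k+1)^n(k+1)^m\sigma(k)w(k)$, using that $B\chi(z)=\chi(z+1)$, that $\sigma(\Lambda)$ acts as multiplication by $\sigma(z)$ on $\chi$, and that $B$ commutes past nothing problematic since $\sigma(\Lambda)$ and $B$ both act before the pairing. Comparing, both sides equal $\sum_{k\ge 0}(k+1)^{n+m}\sigma(k)w(k)$, which establishes \eqref{eq:Gram symmetry}.

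The one point requiring care — and what I expect to be the main (though modest) obstacle — is the index-shift step: justifying that $\sum_{k\ge 1}\theta(k)k^{n+m}w(k)=\sum_{k\ge 0}\theta(k+1)(k+1)^{n+m}w(k+1)$ and that no boundary contribution is lost. Here the hypothesis $\theta(0)=0$ is exactly what is needed, since it kills the term that would otherwise be "created" at $k=0$ after reindexing, matching the fact that the Pearson equation $\eqref{eq:Pearson}$ is only posited for $k\in\N_0$. One should also note convergence is not an issue at the formal level: the identity \eqref{eq:Gram symmetry} is an identity of semi-infinite matrices, each entry of which is a finite combination of the moments $\rho_n$ (the polynomial coefficients of $\theta$ and $\sigma$ truncate the sums to finitely many moments), so the manipulation is purely algebraic once we know the moments exist. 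Finally, I would remark that the placement of $B$ on the left and $B^\top$ on the right is forced by the symmetry $G=G^\top$ together with $\chi(z+1)=B\chi(z)$, which is why the symmetry takes the stated conjugated form rather than a one-sided one.
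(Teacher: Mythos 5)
Your proof is correct and follows essentially the same route as the paper: use $\Lambda\chi=z\chi$ to turn $\theta(\Lambda)$ into multiplication by $\theta(k)$, drop the $k=0$ term via $\theta(0)=0$, shift the index, apply the Pearson equation, and recognize $\chi(k+1)=B\chi(k)$ to pull out $B$ and $B^\top$. The only difference is cosmetic — you verify the identity entrywise while the paper writes the same chain of equalities directly at the level of the matrix sum $G=\sum_{k\geq 0}\chi(k)\chi(k)^\top w(k)$.
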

\begin{proof}
	The moment matrix is
	\begin{align}\label{eq:Gram_discrete}
	G=\sum_{k=0}^\infty \chi(k)\chi(k)^\top w(k).
	\end{align}
	Thus,\enlargethispage{2cm}
		\begin{align*}
\theta(\Lambda)	G&=\sum_{k=0}^\infty \theta(\Lambda)	\chi(k)\chi(k)^\top w(k) &\text{use \eqref{eq:Gram_discrete}}\\
&=\sum_{k=1}^\infty 	\chi(k)\chi(k)^\top \theta(k)w(k) & \text{use $\Lambda\chi=z\chi$ and $\theta(0)=0$}\\
&=\sum_{k=0}^\infty 	\chi(k+1)\chi(k+1)^\top \theta(k+1)w(k+1) &\text{shift the summation variable} \\
&=\sum_{k=0}^\infty 	\chi(k+1)\chi(k+1)^\top \sigma(k)w(k)&\text{use Pearson equation \eqref{eq:Pearson}}\\
&=\sum_{k=0}^\infty 	B\chi(k)\chi(k)^\top B^\top\sigma(k)w(k) &\text{use \eqref{eq:Pascal}}\\
&=\sum_{k=0}^\infty 	B\sigma(\Lambda)\chi(k)\chi(k)^\top w(k)B^\top  & 
\text{use $\Lambda\chi=z\chi$ again}\\
&=B\sigma(\Lambda)GB^\top&
\text{use \eqref{eq:Gram_discrete}}.
	\end{align*}
\end{proof}

\begin{rem}
	This result extends to the case when $\theta$ and $\sigma$ are entire functions, not necessarily polynomials, and we can ensure some meaning for $\theta(\Lambda)$ and $\sigma(\Lambda)$. Later on,   see \S \ref{sec:hypergeometry and shifts}, we will show that this symmetry of the Gram matrix is a direct consequence of the generalized hypergeometric ODE satisfied by the the $0$-th moment.
\end{rem}

\subsection{Consequences for the  Jacobi matrix}
We can use the Cholesky factorization \eqref{eq:Cholesky} and the Jacobi matrix \eqref{eq:Jacobi} to get
\begin{pro}[Symmetry of the Jacobi matrix]\label{pro:Jacobi_Pearson}
	Let the weight $w$ be subject to the discrete Pearson equation \eqref{eq:Pearson}, where the functions $\theta,\sigma$  are entire functions, not necessarily polynomials, with  $\theta(0)=0$.  Then, 
	\begin{align}\label{eq:Jacobi symmetry}
\Pi^{-1}	H\theta(J^\top)=\sigma(J)H\Pi^\top
	\end{align}	
	Moreover,  $H\theta(J^\top)$ and $\sigma(J)H$ are symmetric matrices.
\end{pro}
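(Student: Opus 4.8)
The plan is to obtain the identity by purely algebraic dressing of the hypergeometric symmetry of Theorem~\ref{teo:symmetry_Gram} with the Cholesky factor $S$, and to read off the two symmetry statements from the Hankel/Jacobi identity \eqref{eq:symmetry_J}. So there is essentially no combinatorics here, only careful bookkeeping of conjugations.

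First I would start from $\theta(\Lambda)G=B\sigma(\Lambda)GB^\top$ (Theorem~\ref{teo:symmetry_Gram}) and substitute the Cholesky factorization $G=S^{-1}HS^{-\top}$ of \eqref{eq:Cholesky}. I would then conjugate the resulting identity by $S$, i.e. multiply on the left by $S$ and on the right by $S^\top$. On the left-hand side this gives $S\theta(\Lambda)S^{-1}H=\theta\big(S\Lambda S^{-1}\big)H=\theta(J)H$, using \eqref{eq:Jacobi}. On the right-hand side I would insert $S^{-1}S$ to factor $SB\sigma(\Lambda)S^{-1}=(SBS^{-1})\big(S\sigma(\Lambda)S^{-1}\big)=\Pi\,\sigma(J)$, and recognize that the rightmost block is $S^{-\top}B^\top S^\top=(SBS^{-1})^\top=\Pi^\top$. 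Altogether this produces
\begin{align*}
\theta(J)H=\Pi\,\sigma(J)H\,\Pi^\top .
\end{align*}
Next I would invoke \eqref{eq:symmetry_J}, $JH=HJ^\top$, equivalently $J^\top=H^{-1}JH$, which gives $\theta(J^\top)=H^{-1}\theta(J)H$ and hence $H\theta(J^\top)=\theta(J)H$. Substituting this in the displayed identity and multiplying on the left by $\Pi^{-1}=SB^{-1}S^{-1}$ yields exactly $\Pi^{-1}H\theta(J^\top)=\sigma(J)H\Pi^\top$, which is the claimed relation.

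For the symmetry of $H\theta(J^\top)$ and of $\sigma(J)H$, I would again use \eqref{eq:symmetry_J}. Since $H$ is diagonal and $(\theta(J^\top))^\top=\theta(J)$, we get $(H\theta(J^\top))^\top=\theta(J)H$, and we have just shown $\theta(J)H=H\theta(J^\top)$, so $H\theta(J^\top)$ is symmetric; likewise $J^\top=H^{-1}JH$ gives $\sigma(J)H=H\sigma(J^\top)$, whence $(\sigma(J)H)^\top=H\sigma(J^\top)=\sigma(J)H$. (Alternatively, one can note that $\theta(\Lambda)G$ is itself symmetric: transposing the right-hand side of Theorem~\ref{teo:symmetry_Gram} and using $G=G^\top$ together with the Hankel relation $\Lambda G=G\Lambda^\top$ to move $\sigma(\Lambda^\top)$ past $G$, one recovers the same expression; then $\theta(J)H=S\theta(\Lambda)GS^\top$ is manifestly symmetric, and so is $\sigma(J)H$ after one more use of \eqref{eq:symmetry_J}.)

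The only genuinely delicate point is that $\theta$ and $\sigma$ are allowed to be entire rather than polynomial, so $\theta(\Lambda)$, $\theta(J)$, $\sigma(J)$ are power series in semi-infinite matrices. All the steps above — the functional calculus commuting with conjugation, the identity $S\theta(\Lambda)S^{-1}=\theta(J)$, and the manipulation $\theta(J^\top)=H^{-1}\theta(J)H$ — must be understood in the same formal sense in which Theorem~\ref{teo:symmetry_Gram} was extended in the Remark following it, where a meaning for $\theta(\Lambda)$ and $\sigma(\Lambda)$ is presupposed. Granting that, every displayed step is a one-line matrix identity, and the main "obstacle" is merely keeping track of which side each factor of $S$, $B$, and $H$ lands on.
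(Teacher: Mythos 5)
Your proof is correct and follows essentially the same route as the paper: dress the moment-matrix symmetry of Theorem~\ref{teo:symmetry_Gram} with the Cholesky factor $S$ and identify $\Pi$, $\theta(J)$, $\sigma(J)$, then use \eqref{eq:symmetry_J} for the symmetry claims. The only (immaterial) difference is ordering: the paper first invokes the Hankel relation $\Lambda G=G\Lambda^\top$ to land directly on $H\theta(J^\top)$ after conjugation, while you conjugate first to get $\theta(J)H$ and then convert via $JH=HJ^\top$.
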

\begin{proof}
	Given the Hankel property, $\Lambda G=G\Lambda^\top$, we write \eqref{eq:Gram symmetry} as 
$	G\theta(\Lambda^\top)=B\sigma(\Lambda)GB^\top$,
	and using the Cholesky factorization \eqref{eq:Cholesky} we get
$	S^{-1} HS^{-\top}\theta(\Lambda^\top)=
	B\sigma(\Lambda)S^{-1} H S^{-\top}B^\top$,
	so that
$ HS^{-\top}\theta(\Lambda^\top)S^\top=
	SBS^{-1} S\sigma(\Lambda)S^{-1} H S^{-\top}B^\top S^\top$,
	and we get Equation \eqref{eq:Jacobi symmetry}.

Let us prove that the matrices $H\theta(J^\top)$ and $\sigma(J)H$ are symmetric. This fact follows from \eqref{eq:symmetry_J}, $JH=HJ^\top$. Indeed,
	\begin{align*}
	(H\theta(J^\top))^\top&=\theta(J)H=\theta(HJ^\top H^{-1})H=H\theta(J^\top)H^{-1}H=H\theta(J^\top),\\
	(\sigma(J)H)^\top&=H\sigma(J^\top)=H\sigma(H^{-1}J H)=HH^{-1}\sigma(J)H=\sigma(J)H.
	\end{align*}
\end{proof}

\subsection{Generalized hypergeometric functions and the  Pearson equation}
Let us assume that $\theta,\sigma$ are polynomials,
and  denote their respective degrees by
$N+1:=\deg\theta(z)$ and $M:=\deg\sigma(z)$.
The roots of these polynomials are denoted by $\{-b_i+1\}_{i=1}^{N}$ and $\{-a_i\}_{i=1}^M$.  Following \cite{diego_paco} we choose 
\begin{align*}
\theta(z)&= z(z+b_1-1)\cdots(z+b_{N}-1), &
\sigma(z)&= \eta (z+a_1)\cdots(z+a_M).
\end{align*}
Notice that we have normalized $\theta$ to be a monic polynomial, while $\sigma$ is not monic, being the coefficient of the leading power denoted by $\eta$. This parameter $\eta$ will be instrumental in what follows.
Therefore,  see \cite{diego_paco}, we have that the weight satisfying  the Pearson equation \eqref{eq:Pearson} is proportional to
\begin{align}\label{eq:Pearson_weight}
w(z)=\frac{(a_1)_z\cdots(a_M)_z}{\Gamma(z+1)(b_1)_z\cdots(b_{N})_z}\eta^z,
\end{align}
where the Pochhammer symbol is
$(\alpha)_{z}=\dfrac {\Gamma (\alpha+z)}{\Gamma (\alpha)}$,
that for $z\in \N$ has the standard expression
\begin{align*}
(\alpha)_{n}&=\alpha(\alpha+1)(\alpha+2)\cdots (\alpha+n-1),& (\alpha)_{0}&=1.
\end{align*}
The moments of this weight are finite whenever, see \cite{diego_paco} and references therein,
\begin{enumerate}
	\item $M\leq N$ and $\eta\in\C $.
\item$M> N$, $\eta\in\C $ and one or more of the parameters $a_i$ is a nonpositive integer.
	\item $M=N+1$ and $|\eta|<1$.
	\item $M=N+1$, $|\eta|=1$ and 
	$\Re(b_1+\cdots+b_{N-1}-a_1-\dots -a_M)>0
	$.
\end{enumerate}

According to \eqref{eq:first_moment} the $0$-th moment  
\begin{align*}
\rho_0&=\sum_{k=0}^\infty w(k)=\sum_{k=0}^\infty \frac{(a_1)_k\cdots(a_M)_k}{(b_1+1)_k\cdots(b_{N}+1)_k}\frac{\eta^k}{k!}\\
&=\tensor[_M]{F}{_{N}} (a_1,\dots,a_M;b_1,\dots,b_{N};\eta)
=
{\displaystyle \,{}_{M}F_{N}\left[{\begin{matrix}a_{1}&\cdots &a_{M}\\b_{1}&\cdots &b_{N}\end{matrix}};\eta\right].}
\end{align*}
is the generalized hypergeometric function, where we are using the two standard 
notations,
see \cite{generalized_hypegeometric_functions}.
Then,  according to
\eqref{eq:moments}, for $n\in\N$, the corresponding  higher moments  $\rho_n=\sum_{k=0}^\infty k^n w(k)$, are
\begin{align}\label{eq:moment_n}
\rho_n&=\vartheta_\eta^n\rho_0=\vartheta_\eta^n\Big({\displaystyle \,{}_{M}F_{N}\left[{\begin{matrix}a_{1}&\cdots &a_{M}\\b_{1}&\cdots &b_{N}\end{matrix}};\eta\right]}\Big), &\vartheta_\eta:=\eta\frac{\partial }{\partial \eta}.
\end{align}

Given a function $f(\eta)$, we consider the Wronskian
\begin{align*}
\mathscr W_n(f)=\det\begin{pNiceMatrix}[columns-width = 0.5cm]
f &\vartheta_\eta f& \vartheta_\eta^2f&\Cdots &\vartheta_\eta^kf\\
	\vartheta_\eta f& \vartheta_\eta^2f&&\Iddots& \vartheta_\eta^{k+1}f\\
	\vartheta_\eta^2 f&&&&\\
	\Vdots& &\Iddots&&\\
	\vartheta_z^kf&\vartheta_\eta^{k+1}f& &&\vartheta_\eta^{2k}f
\end{pNiceMatrix}.
\end{align*}
Then, we have that the Hankel determinants $\varDelta_k=\det G^{[k]}$ determined by the truncations of the corresponding Gram  matrix are Wronskians of generalized hypergeometric functions
\begin{align}\label{eq:hankel_hyper1}
\varDelta_k&=\tau_k, & \tau_k&:=\mathscr W_{k}\Big({\displaystyle \,{}_{M}F_{N}\left[{\begin{matrix}a_{1}&\cdots &a_{M}\\b_{1}&\cdots &b_{N}\end{matrix}};\eta\right]}\Big),\\
\tilde \varDelta_k &=\vartheta_\eta\tau_k.\label{eq:hankel_hyper2}
\end{align}
We also have
\begin{align}\label{eq:Wp_n}
H_k&=\frac{\tau_{k+1}}{\tau_k},	&
p^1_k&=-\vartheta_\eta\log \tau_k.
\end{align}
\begin{rem}
	The functions $\tau_k$ are knwon in the theory of integrable systems as $\tau$-functions.
\end{rem}
\subsection{The Laguerre--Freud structure matrix and the Cholesky factorization}

\begin{theorem}[Laguerre--Freud structure matrix]\label{teo:Laguerre-Freud}
	Let us assume that the weight $w$ is subject to the discrete Pearson equation \eqref{eq:Pearson} with  $\theta,\sigma$ polynomials such that $\theta(0)=0$, $\deg\theta(z)=N+1$, $ \deg\sigma(z)=M$. 	Then, the Laguerre--Freud structure matrix
\begin{align}\label{eq:Psi}
\Psi&:=\Pi^{-1}H\theta(J^\top)=\sigma(J)H\Pi^\top=\Pi^{-1}\theta(J)H=H\sigma(J^\top)\Pi^\top\\
&=\theta(J+I)\Pi^{-1} H=H\Pi^\top\sigma(J^\top-I),\label{eq:Psi2}
\end{align}
has only  $N+M+2$ possibly nonzero  diagonals ($N+1$ superdiagonals, the main diagonal  and $M$ subdiagonals) 
\begin{align*}
\Psi=(\Lambda^\top)^M\psi^{(-M)}+\dots+\Lambda^\top \psi^{(-1)}+\psi^{(0)}+
\psi^{(1)}\Lambda+\dots+\psi^{(N+1)}\Lambda^{N+1},
\end{align*}
for some diagonal matrices $\psi^{(k)}$. In particular,  the lowest subdiagonal and highest superdiagonal  are given by
\begin{align}\label{eq:diagonals_Psi}
\left\{
\begin{aligned}
(\Lambda^\top)^M\psi^{(-M)}&=\eta(J_-)^MH,&
\psi^{(-M)}=\eta H\prod_{k=0}^{M-1}T_-^k\gamma=\eta\diag\Big(H_0\prod_{k=1}^{M}\gamma_k, H_1\prod_{k=2}^{M+1}\gamma_k,\dots\Big),\\
\psi^{(N+1)} \Lambda^{N+1}&=H(J_-^\top)^{N+1},&
\psi^{(N+1)}=H\prod_{k=0}^{N}T_-^k\gamma=\diag\Big(H_0\prod_{k=1}^{N+1}\gamma_k, H_1\prod_{k=2}^{N+2}\gamma_k,\dots\Big).
\end{aligned}
\right.
\end{align}
The vector $P(z)$ of orthogonal polynomials fulfill the following structure equations
\begin{align}\label{eq:P_shift}
\theta(z)P(z-1)&=\Psi H^{-1} P(z), &
\sigma(z)P(z+1)&=\Psi^\top H^{-1} P(z).
\end{align}
\end{theorem}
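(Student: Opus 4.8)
The plan is to deduce everything from the moment-matrix symmetry of Theorem \ref{teo:symmetry_Gram}, namely $\theta(\Lambda)G=B\sigma(\Lambda)GB^\top$, by conjugating with the Cholesky factor $S$ and tracking triangular structure. First I would establish the chain of equalities in \eqref{eq:Psi}--\eqref{eq:Psi2}. Starting from $\theta(\Lambda)G=B\sigma(\Lambda)GB^\top$ and using the Hankel property $\Lambda G=G\Lambda^\top$ to move $\theta(\Lambda)$ to the right as $G\theta(\Lambda^\top)$, I substitute $G=S^{-1}HS^{-\top}$ and multiply by $S$ on the left and $S^\top$ on the right; using $J=S\Lambda S^{-1}$, $\Pi^{\pm1}=SB^{\pm1}S^{-1}$ (so that $S B S^{-1}=\Pi$ etc.) this yields $H S^{-\top}\theta(\Lambda^\top)S^\top = \Pi\, S\sigma(\Lambda)S^{-1} H S^{-\top}B^\top S^\top$, i.e. $H\theta(J^\top)=\Pi\,\sigma(J)H\,\Pi^\top$, which is exactly $\Pi^{-1}H\theta(J^\top)=\sigma(J)H\Pi^\top$. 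The remaining forms of $\Psi$ in \eqref{eq:Psi}--\eqref{eq:Psi2} then follow by combining this with the symmetry relations of Proposition \ref{pro:Jacobi_Pearson} ($H\theta(J^\top)=\theta(J)H$ and $\sigma(J)H=H\sigma(J^\top)$) and the intertwining relations \eqref{eq:JPi} ($R(J)\Pi^{\pm1}=\Pi^{\pm1}R(J\pm I)$), applied with $R=\theta$ and $R=\sigma$ respectively.

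Next I would determine the band structure. Since $J$ is tridiagonal (lower Hessenberg with one superdiagonal), $\theta(J)$, having degree $N+1$, is banded with $N+1$ subdiagonals and $N+1$ superdiagonals; but more usefully, $J^\top$ is upper Hessenberg, so $\theta(J^\top)$ has at most $N+1$ superdiagonals below... — precisely, $\theta(J)$ has lower bandwidth $N+1$ and upper bandwidth $N+1$. Writing $\Psi=\Pi^{-1}\theta(J)H$ with $\Pi^{-1}$ lower unitriangular shows $\Psi$ is lower triangular plus at most $N+1$ superdiagonals coming from $\theta(J)H$; writing instead $\Psi=\sigma(J)H\Pi^\top$ with $\Pi^\top$ upper unitriangular and $\sigma(J)$ of lower bandwidth $M$ shows $\Psi$ has at most $M$ subdiagonals. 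Intersecting the two descriptions gives exactly $M$ subdiagonals, the main diagonal, and $N+1$ superdiagonals, so $\Psi=(\Lambda^\top)^M\psi^{(-M)}+\dots+\psi^{(N+1)}\Lambda^{N+1}$ for diagonal matrices $\psi^{(k)}$. For the extreme diagonals, the lowest subdiagonal of $\sigma(J)H\Pi^\top$ is read off from the leading term $\eta J^M$ of $\sigma(J)$ and the fact that $\Pi^\top$ is unitriangular: only the $\eta(J_-)^M$ part (the product of the strictly-lower parts $\Lambda^\top\gamma$) contributes to diagonal $M$ below, giving $(\Lambda^\top)^M\psi^{(-M)}=\eta(J_-)^M H$; expanding $(J_-)^M=(\Lambda^\top\gamma)^M$ with the ladder relations \eqref{eq:ladder_lambda} yields $\psi^{(-M)}=\eta H\prod_{k=0}^{M-1}T_-^k\gamma$. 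Dually, using $\Psi=\Pi^{-1}\theta(J)H$ (or $H\theta(J^\top)\dots$) and the monic leading term $J^{N+1}$ of $\theta$, the highest superdiagonal comes only from $H(J_-^\top)^{N+1}=H(\gamma^\top\Lambda)^{N+1}$, giving $\psi^{(N+1)}=H\prod_{k=0}^N T_-^k\gamma$.

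Finally, the structure equations \eqref{eq:P_shift}: from $P(z)=S\chi(z)$, $\chi(z-1)=B^{-1}\chi(z)$ and $\theta(\Lambda)\chi(z)=\theta(z)\chi(z)$ we get $\theta(z)P(z-1)=S\theta(\Lambda)B^{-1}\chi(z)$; but $S\theta(\Lambda)B^{-1}S^{-1}=S\theta(\Lambda)S^{-1}SB^{-1}S^{-1}=\theta(J)\Pi^{-1}$, and using $\theta(J)\Pi^{-1}=\Pi^{-1}\theta(J+I)$... — rather, combining the identified form $\Psi=\Pi^{-1}\theta(J)H$ gives $\theta(J)\Pi^{-1}=\Pi^{-1}H\theta(J^\top)H^{-1}\cdot(\text{adjust})$; cleaner is to note $\Psi H^{-1}=\Pi^{-1}\theta(J)$ directly, hence $\theta(z)P(z-1)=\Pi^{-1}\theta(J)S\chi(z)=\Pi^{-1}\theta(J)P(z)=\Psi H^{-1}P(z)$. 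Wait — that needs $S\theta(\Lambda)B^{-1} = \Pi^{-1}\theta(J)S$, i.e. $\Pi\,S\,\theta(\Lambda)\,B^{-1}=\theta(J)S$, i.e. $\Pi S\theta(\Lambda)=\theta(J)SB$, i.e. $\Pi\theta(J)S=\theta(J)\Pi S$, true since $\Pi$ and $\theta(J)$ need not commute — so here one instead uses $\theta(\Lambda)B^{-1}=B^{-1}\theta(\Lambda-I)$ from \eqref{eq:JPi}'s proof and re-derives via $\Psi=\theta(J+I)\Pi^{-1}H$ from \eqref{eq:Psi2}. The second equation in \eqref{eq:P_shift} follows symmetrically from $\chi(z+1)=B\chi(z)$, $\sigma(\Lambda)\chi(z)=\sigma(z)\chi(z)$, and the form $\Psi^\top=\Pi\,H\sigma(J^\top)=\dots$. \textbf{The main obstacle} is bookkeeping: correctly matching which of the several equivalent forms of $\Psi$ makes each claim (band count from above, band count from below, top diagonal, bottom diagonal, each structure equation) transparent, and handling the non-commutativity of $\Pi$ with $\theta(J)$, $\sigma(J)$ via the intertwiners \eqref{eq:JPi} rather than naive commutation.
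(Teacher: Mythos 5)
Your proposal is correct and follows essentially the same route as the paper: the identity $\Pi^{-1}H\theta(J^\top)=\sigma(J)H\Pi^\top$ from the Pearson symmetry of the moment matrix (Proposition \ref{pro:Jacobi_Pearson}) together with the intertwining relations \eqref{eq:JPi}, band counting from the two factorizations $\Pi^{-1}\theta(J)H$ and $\sigma(J)H\Pi^\top$, extreme diagonals read off the leading terms $\eta J^M$ and $J^{N+1}$, and the structure equations from $JP=zP$ and $P(z\mp1)=\Pi^{\mp1}P(z)$. The only blemish is the detour in the last step: your ``cleaner'' computation $\Psi H^{-1}P(z)=\Pi^{-1}\theta(J)P(z)=\Pi^{-1}\theta(z)P(z)=\theta(z)P(z-1)$ is already valid exactly as the paper does it (the scalar $\theta(z)$ commutes with everything, so no identity like $S\theta(\Lambda)B^{-1}=\Pi^{-1}\theta(J)S$ is needed), though your fallback via $\Psi=\theta(J+I)\Pi^{-1}H$ is also fine.
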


\begin{proof}
	To get \eqref{eq:Psi2} just use \eqref{eq:JPi}.
Now,
notice that the matrices   $H\theta(J^\top)$ and $\sigma(J)H$  
are banded matrices with $2N+3$ diagonals ($N+1$ superdiagonals and subdiagonals) and  $2M+1$ ($M$ superdiagonals and subdiagonals), respectively. Thus,  $\Pi^{-1}H\theta(J^\top)$  has at most $N+1$ superdiagonals while $\sigma(J)H\Pi^\top$ has at most $M$ subdiagonals. Consequently,  
$\Psi$ given by \eqref{eq:Psi}
is a banded semi-infinite matrix as described. 
Then, \eqref{eq:diagonals_Psi} follow from \eqref{eq:Psi} and the mentioned banded structure. To get the lowest subdiagonal we use $\Psi=\sigma(J)H\Pi^\top$, so that the lowest subdiagonal will come from the lowest subdiagonal of $\sigma(J)$, namely $\eta\, (J_-)^M$ right multiplied by the diagonal matrix $H$ and then right multiplied the main diagonal of $\Pi^\top$, which happens to be the identity.
To get the highest superdiagonal we proceed similarly and use
$\Psi=\Pi^{-1}H\theta(J^\top)$, so that the main superdiagonal will come from the product of the three factors $I$, $H$ and $\theta_N(J_-^\top)^{N+1}$, in this order. The component wise expressions are direct computations.

Finally, we have
\begin{align*}
&\begin{aligned}
\Psi H^{-1} P(z)&=\Pi^{-1}\theta(J)H H^{-1} P(z)\\
&=\Pi^{-1}\theta(J)P(z)\\
&=\Pi^{-1}\theta(z)P(z)\\
&=\theta(z)P(z-1),
\end{aligned}& &\begin{aligned}
\Psi^\top H^{-1} P(z)&=\Pi\sigma(J)H H^{-1} P(z)\\
&=\Pi\sigma (J)P(z)\\
&=\Pi\sigma(z)P(z)\\
&=\sigma(z)P(z+1),
\end{aligned}
 \end{align*}

\end{proof}

\begin{rem}[Laguerre--Freud equations]
	As we will see in the following sections \eqref{eq:Psi} will be instrumental in obtaining non linear recurrences for the recursion coefficients
\begin{align*}
\gamma_{n+1} &=F_1 (n,\gamma_n,\gamma_{n-1},\dots,\beta_n,\beta_{n-1}\dots), &
\beta_{n+1 }&= F_2 (n,\gamma_{n+1},\gamma_n,\dots,\beta_n,\beta_{n-1},\dots),
\end{align*}
for some functions $F_1,F_2$. These recurrences were named by Alphonse Magnus, attending to \cite{laguerre,freud}, as Laguerre--Freud see \cite{magnus,magnus1,magnus2,magnus3}.  This is the reason for the given name to $\Psi$. 
\end{rem}

 \begin{pro}
	The Laguerre--Freud  and Jacobi matrices fulfill
	\begin{align*}
	\sigma(J)\theta(J+I)&=\Psi H^{-1} \Psi^\top H^{-1}, &
	\theta(J)\sigma(J-I)&=\Psi^\top  H^{-1} \Psi H^{-1}.
	\end{align*}
	\begin{proof}
		We have
		\begin{align*}
	\sigma(J)\theta(J+I)P(z)&=	\sigma(z)\theta(z+1)P(z)=\sigma(z)\Psi H^{-1}P(z+1)=\Psi H^{-1}\sigma(z)P(z+1)\\&=
		\Psi H^{-1}\Psi^\top H^{-1} P(z),\\
		\theta(J)	\sigma(J-I)P(z)&=\theta(z)\sigma(z-1)P(z)=\theta(z)\Psi^\top H^{-1}P(z-1)=\Psi^\top  H^{-1}\theta(z)P(z-1)\\&=
		\Psi^\top H^{-1}\Psi H^{-1} P(z),
		\end{align*}
		that hold for all $z\in\C$ and, consequently, imply the desired result.
	\end{proof}
\end{pro}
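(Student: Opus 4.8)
The plan is to test both matrix identities against the eigenvector $P(z)$ of the Jacobi matrix and then strip it off by a density argument. First I would recall from \eqref{eq:Jacobi} that $JP(z)=zP(z)$, so $P(z)$ is an eigenvector of every polynomial in $J$; in particular $(J\pm I)P(z)=(z\pm 1)P(z)$, hence $\theta(J+I)P(z)=\theta(z+1)P(z)$ and $\sigma(J-I)P(z)=\sigma(z-1)P(z)$. Next I would bring in the structure equations \eqref{eq:P_shift} of Theorem \ref{teo:Laguerre-Freud}, namely $\theta(z)P(z-1)=\Psi H^{-1}P(z)$ and $\sigma(z)P(z+1)=\Psi^\top H^{-1}P(z)$, observing that the scalar factors $\theta(z),\sigma(z)$ commute freely past the semi-infinite matrices.

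Combining these ingredients, for the first identity one obtains
\begin{align*}
\sigma(J)\theta(J+I)P(z)&=\theta(z+1)\sigma(J)P(z)=\sigma(z)\theta(z+1)P(z)\\
&=\sigma(z)\,\Psi H^{-1}P(z+1)=\Psi H^{-1}\sigma(z)P(z+1)=\Psi H^{-1}\Psi^\top H^{-1}P(z),
\end{align*}
where the third equality uses \eqref{eq:P_shift} evaluated at $z+1$, i.e. $\Psi H^{-1}P(z+1)=\theta(z+1)P(z)$, and the last uses \eqref{eq:P_shift} once more. The second identity comes out identically after exchanging $\theta\leftrightarrow\sigma$ and $\Psi\leftrightarrow\Psi^\top$ and shifting $z\mapsto z-1$.

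To conclude I would upgrade ``equal when applied to $P(z)$ for every $z\in\C$'' to ``equal as matrices''. Writing $P(z)=S\chi(z)$ as in \eqref{eq:PS}, with $S$ lower unitriangular and hence invertible and $\chi(z)=(1,z,z^2,\dots)^\top$, one notes that each matrix on either side is banded, being a polynomial in, or a product of, the banded matrices $J$, $H$ and $\Psi$ (the bandedness of $\Psi$ being precisely the content of Theorem \ref{teo:Laguerre-Freud}). Hence each row of the difference of the two sides, multiplied on the right by $S$, is a finitely supported vector whose pairing with $\chi(z)$ is a polynomial in $z$ vanishing for all $z$; therefore that row is zero, the difference times $S$ vanishes, and cancelling $S$ gives the two asserted identities.

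The only step that requires genuine care is this last cancellation: without the band structure the products $\sigma(J)\theta(J+I)$ and $\Psi H^{-1}\Psi^\top H^{-1}$ need not even be well defined entrywise and the ``polynomial in $z$'' argument would collapse. So the hard part is really bookkeeping — checking that every product is legitimate and every row in sight is finite — which is guaranteed by the banded structure established earlier; the algebra itself is a one-line substitution using the eigenvalue equation and the structure equations \eqref{eq:P_shift}.
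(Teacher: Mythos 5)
Your proposal is correct and follows essentially the same route as the paper: apply both sides to $P(z)$, use the eigenvalue property of $J$ together with the structure equations \eqref{eq:P_shift}, and conclude from equality for all $z\in\C$. The only difference is that you spell out the final ``strip off $P(z)$'' step (bandedness, finitely supported rows, vanishing polynomials, invertibility of $S$), which the paper leaves implicit with ``consequently''; that justification is sound and harmless.
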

\begin{theorem}[Cholesky factorization]
	Let us assume that $H\theta(J^\top)$ and $\sigma(J)H$ have the following Cholesky factorizations
	\begin{align}\label{eq:Choleskysigma}
	H\theta(J^\top)&= \Theta^{-1} H_\theta \Theta^{-\top},&
	\sigma(J)H&=\Sigma^{-1} H_\sigma \Sigma^{-\top}, 
	\end{align}
	with $\Theta$  and $\Sigma$  lower unitriangular matrices and  $H_\theta$ and $H_\sigma$   diagonals matrices. Then,  
	\begin{enumerate}
		\item  $\Theta^{-1}$ and  $\Sigma^{-1}$  have only  first   $N+1$ and  $M$ subdiagonals possibly nonzero, respectively. 
		\item We have
		\begin{align*}
		\Pi&=\Theta^{-1}\Sigma, &  H_\theta&=H_\sigma=:h.
		\end{align*}
		\item 
	The Laguerre--Freud matrix has the following Gauss--Borel factorization
			\begin{align*}
		\Psi= \Sigma^{-1} h \Theta^{-\top}.
		\end{align*}
	\end{enumerate}
\end{theorem}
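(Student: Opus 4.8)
The plan is to prove the three assertions in turn: assertion~(1) is a band--preservation statement about Cholesky factorizations, and assertions~(2) and~(3) follow from the symmetry identity \eqref{eq:Jacobi symmetry} of Proposition~\ref{pro:Jacobi_Pearson} together with the uniqueness of the triangular factorization \eqref{eq:Cholesky}. Throughout I would use that, since $J$ is tridiagonal, $\theta(J^\top)$ is banded with exactly $N+1$ super-- and $N+1$ subdiagonals and $\sigma(J)$ is banded with $M$ super-- and $M$ subdiagonals, that left multiplication by the diagonal matrix $H$ does not change these band widths, and that by Proposition~\ref{pro:Jacobi_Pearson} both $H\theta(J^\top)$ and $\sigma(J)H$ are symmetric.

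For (1) I would invoke the elementary fact that the lower unitriangular factor of a symmetric banded matrix inherits its lower band width: if $A=LDL^\top$ with $L$ lower unitriangular, $D$ diagonal, and $A_{i,j}=0$ whenever $i-j>p$, then $L_{i,j}=0$ whenever $i-j>p$. This is the usual column--by--column induction on the Cholesky recursion: in the update of $L_{i,k}$ for $i-k>p$ the entry $A_{i,k}$ vanishes, and every product $L_{i,j}L_{k,j}$ with $j<k$ vanishes too, because $i-j>i-k>p$ and the inductive hypothesis on the already processed column $j$ forces $L_{i,j}=0$; since the corresponding pivot is nonzero, $L_{i,k}=0$ follows. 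Applying this to $H\theta(J^\top)=\Theta^{-1}H_\theta\Theta^{-\top}$ with $p=N+1$ and to $\sigma(J)H=\Sigma^{-1}H_\sigma\Sigma^{-\top}$ with $p=M$ gives the stated band structure of the lower factors $\Theta^{-1}$ and $\Sigma^{-1}$.

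For (2) I would start from $\Pi^{-1}H\theta(J^\top)=\sigma(J)H\Pi^\top$, insert the factorizations \eqref{eq:Choleskysigma}, and multiply on the right by $\Pi^{-\top}$, obtaining
\[
\big(\Pi^{-1}\Theta^{-1}\big)H_\theta\big(\Pi^{-1}\Theta^{-1}\big)^\top=\Sigma^{-1}H_\sigma\Sigma^{-\top},
\]
where I used $\big(\Pi^{-1}\Theta^{-1}\big)^\top=\Theta^{-\top}\Pi^{-\top}$. Since $\Pi^{-1}\Theta^{-1}$ and $\Sigma^{-1}$ are lower unitriangular and $H_\theta$, $H_\sigma$ are diagonal, both sides are $LDL^\top$ factorizations of the same matrix, so by uniqueness $\Pi^{-1}\Theta^{-1}=\Sigma^{-1}$ and $H_\theta=H_\sigma=:h$; taking inverses in the first equality gives $\Theta\Pi=\Sigma$, i.e. $\Pi=\Theta^{-1}\Sigma$. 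Then (3) is immediate: from $\Psi=\Pi^{-1}H\theta(J^\top)$ in \eqref{eq:Psi} and the identities just obtained, $\Psi=\Pi^{-1}\Theta^{-1}H_\theta\Theta^{-\top}=\Sigma^{-1}h\,\Theta^{-\top}$.

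The only step with any real content is the band--preservation lemma in~(1); everything else is bookkeeping with triangular factorizations. The point to be careful about is that $H\theta(J^\top)$ and $\sigma(J)H$ are genuinely symmetric and banded with the widths claimed --- which is precisely what Proposition~\ref{pro:Jacobi_Pearson} and the tridiagonality of $J$ supply --- and that the $LDL^\top$ factorization is unique here, which holds truncation by truncation under the same nonvanishing--minor hypotheses that make \eqref{eq:Cholesky} and \eqref{eq:Choleskysigma} available in the first place.
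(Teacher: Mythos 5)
Your proposal is correct and follows essentially the same route as the paper: part (2) via the symmetry $\Pi^{-1}H\theta(J^\top)=\sigma(J)H\Pi^\top$ plus uniqueness of the Gauss--Borel factorization, and part (3) by substituting into $\Psi=\Pi^{-1}H\theta(J^\top)$. The only difference is that you spell out the band-preservation induction for part (1), which the paper merely asserts from the tridiagonality of $J$ and the degrees of $\theta,\sigma$.
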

\begin{proof}
Recall that according to Proposition \ref{pro:Jacobi_Pearson} the matrices  $H\theta(J^\top)$ and $\sigma(J)H$ are symmetric and, consequently, the corresponding Gauss--Borel factorizations, when they exists,  will be Cholesky factorizations.
	\begin{enumerate}
		\item  
		If follows from the fact that $J$ is tridiagonal and $\theta$ and $\sigma$ polynomials of degrees $N+1$ and $M$, respectively.
		\item 	The symmetry \eqref{eq:Jacobi symmetry} yields
		\begin{align*}
		\Pi^{-1}\Theta^{-1} H_\theta \Theta^{-\top}=\Sigma^{-1} H_\sigma \Sigma^{-\top}\Pi^\top
		\end{align*}
		and given the uniqueness of the Gauss--Borel factorization  the result follows.		
		\item From \eqref{eq:Psi} we have different alternatives to show the result, let us see two of them
		\begin{align*}
		\Psi&=\Pi^{-1}H\theta(J^\top)=\Sigma^{-1}\Theta \Theta^{-1} H_\theta \Theta^{-\top}=\Sigma^{-1} h\Theta^{-\top}, &
	\Psi&	=\sigma(J)H\Pi^\top=\Sigma^{-1} H_\sigma \Sigma^{-\top}
	\Sigma^\top  \Theta^{-1}=\Sigma^{-1} h\Theta^{-\top}.
		\end{align*}
	\end{enumerate}
\end{proof}

The compatibility with the recursion relation, i.e. eigenfunctions of the Jacobi matrix, and the recursion matrix leads to some interesting equations.
\begin{pro}
	The following compatibility conditions for the Laguerre--Freud  and Jacobi matrices  hold
\begin{subequations}
		\begin{align}\label{eq:compatibility_Jacobi_structure_a}
	[\Psi H^{-1},J]&=\Psi H^{-1}, \\ \label{eq:compatibility_Jacobi_structure_b}	[J, \Psi ^\top H^{-1}]&=\Psi ^\top H^{-1}.
	\end{align}
\end{subequations}
\end{pro}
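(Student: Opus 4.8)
The plan is to read both commutators off the structure equations \eqref{eq:P_shift} of Theorem~\ref{teo:Laguerre-Freud}, using only the eigenvalue relation $JP(z)=zP(z)$ together with the fact that it holds for \emph{every} value of the spectral parameter, in particular $JP(z\mp1)=(z\mp1)P(z\mp1)$.

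First I would left-multiply the first equation in \eqref{eq:P_shift}, namely $\theta(z)P(z-1)=\Psi H^{-1}P(z)$, by $J$:
\[
J\,\Psi H^{-1}P(z)=\theta(z)\,JP(z-1)=(z-1)\,\theta(z)P(z-1)=(z-1)\,\Psi H^{-1}P(z),
\]
whereas left-multiplying $JP(z)=zP(z)$ by $\Psi H^{-1}$ gives $\Psi H^{-1}JP(z)=z\,\Psi H^{-1}P(z)$. Subtracting, $\big(J\Psi H^{-1}-\Psi H^{-1}J\big)P(z)=-\Psi H^{-1}P(z)$ for all $z$. The identical two-line computation applied to $\sigma(z)P(z+1)=\Psi^\top H^{-1}P(z)$ produces $J\,\Psi^\top H^{-1}P(z)=(z+1)\,\Psi^\top H^{-1}P(z)$ and $\Psi^\top H^{-1}JP(z)=z\,\Psi^\top H^{-1}P(z)$, hence $\big(J\Psi^\top H^{-1}-\Psi^\top H^{-1}J\big)P(z)=\Psi^\top H^{-1}P(z)$ for all $z$.

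It then remains to lift these vector identities to matrix identities. All the matrices in sight are banded (the tridiagonal $J$, the diagonal $H^{-1}$, and $\Psi$, which by Theorem~\ref{teo:Laguerre-Freud} has $N+1$ superdiagonals and $M$ subdiagonals), so the products above are well defined row by row and associativity is not an issue. Since $P(z)=S\chi(z)$ with $S$ lower unitriangular, any semi-infinite matrix $A$ with $AP(z)\equiv0$ satisfies $AS\chi(z)\equiv0$, and matching coefficients of the powers of $z$ in each row forces $AS=0$, whence $A=0$. Applying this to $A=[J,\Psi H^{-1}]+\Psi H^{-1}$ and to $A=[J,\Psi^\top H^{-1}]-\Psi^\top H^{-1}$ yields $[J,\Psi H^{-1}]=-\Psi H^{-1}$, i.e. $[\Psi H^{-1},J]=\Psi H^{-1}$, which is \eqref{eq:compatibility_Jacobi_structure_a}, and $[J,\Psi^\top H^{-1}]=\Psi^\top H^{-1}$, which is \eqref{eq:compatibility_Jacobi_structure_b}. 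I do not expect a genuine obstacle here; the one point meriting explicit justification is precisely this last passage from ``valid on all $P(z)$'' to ``valid as matrices'', which rests on the invertibility of $S$ and on the monomials $\chi(z)$ separating the rows as $z$ ranges over $\C$.
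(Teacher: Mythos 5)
Your proposal is correct and follows essentially the paper's own route: identity \eqref{eq:compatibility_Jacobi_structure_a} is obtained exactly as in the paper by evaluating $JP(z)=zP(z)$ at $z-1$, multiplying by $\theta(z)$ and invoking \eqref{eq:P_shift}, and you merely make explicit the lifting from an identity valid on $P(z)$ for all $z$ to a matrix identity (using bandedness and the invertibility of $S$), which the paper leaves implicit in ``from where the relation follows''. The only minor variation is that you derive \eqref{eq:compatibility_Jacobi_structure_b} directly from the second structure equation $\sigma(z)P(z+1)=\Psi^\top H^{-1}P(z)$, whereas the paper transposes \eqref{eq:compatibility_Jacobi_structure_a} and conjugates with $H$ via $HJ^\top H^{-1}=J$; both are equally valid and of the same nature.
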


\begin{proof}
To prove \eqref{eq:compatibility_Jacobi_structure_a}, we evaluate the eigenvalue equation $JP(z)=zP(z)$ in $z-1$ to get $JP(z-1)=(z-1)P(z-1)$. Now multiply it by $\theta (z)$ to obtain $J\theta(z)P(z-1)=(z-1)\theta(z)P(z-1)$. Therefore, recalling  \eqref{eq:P_shift},
	$J\Psi H^{-1}P(z)=(z-1)\Psi H^{-1}P(z)=\Psi H^{-1}(J-I)P(z)$ from where the relation follows.  
	
	Alternatively, observe that
	\begin{align}\label{eq:PsiBLambda}
		\Psi H^{-1}= S B^{-1}\theta(\Lambda) S^{-1}
	\end{align}
from where 
\begin{align*}
		[\Psi H^{-1}, J]&=S[B^{-1}\theta(\Lambda),\Lambda]S^{-1}=S[B^{-1},\Lambda]\theta(\Lambda)S^{-1}=
		SB^{-1}\theta(\Lambda)S^{-1}=\Psi H^{-1}.
\end{align*}

\enlargethispage{1cm}	
	To show \eqref{eq:compatibility_Jacobi_structure_b} we take the
	 transpose of the proved one to get 
	$[J^\top, H^{-1}\Psi ^\top ]=H^{-1}\Psi ^\top$ so that
\begin{align*}
		[HJ^\top H^{-1}, HH^{-1}\Psi ^\top H^{-1} ]=HH^{-1}\Psi ^\top H^{-1}
\end{align*}
	 and recalling that $HJ^\top H^{-1} =J$, see \eqref{eq:symmetry_J}, we get the desired result.
\end{proof}

\subsection{Contiguous  hypergeometric relations}
\label{sec:hypergeometry and shifts}

As we have seen, the polynomial discrete Pearson equation leads to \eqref{eq:moment_n}, and the Hankel determinants are Wronskians of a generalized  hypergeometric function, see \eqref{eq:hankel_hyper1} and \eqref{eq:hankel_hyper2}.
Hence, we expect that some properties of generalized hypergeometric functions  may translate to the Gram matrix.  To write  this dictionary we require of Proposition \ref{pro:Gram_eta},   discussed latter, that ensures,
	\begin{align*}
	\vartheta_\eta G=\Lambda G=G\Lambda^\top.
	\end{align*}

We now study three important relations fulfilled by the generalized hypergeometric function, namely:
\begin{align}
\label{eq:hyper1}\left(\vartheta_\eta+a_{i}\right){}_{M}F_{N}
\left[
\begin{gathered}
	\begin{alignedat}{5}
	a_{1}&&\cdots &&a_{i}&\cdots &&a_{M}
	\end{alignedat}\\[-5pt]
\begin{alignedat}{3}
	b_{1}&&\cdots &&b_{N}
\end{alignedat}
\end{gathered};\eta\right]&=a_{i}\;{}_{M}F_{N}\left[
\begin{gathered}
\begin{alignedat}{5}
a_{1}&&\cdots &&a_{i}+1&\cdots &&a_{M}
\end{alignedat}\\[-5pt]
\begin{alignedat}{3}
b_{1}&&\cdots &&b_{N}
\end{alignedat}
\end{gathered}
;\eta\right],\\
\label{eq:hyper2}
	\left(\vartheta_\eta+b_{j}-1\right){}_{M}F_{N}\left[
		\begin{gathered}
	\begin{alignedat}{3}
	a_{1}&&\cdots &&a_{M}
	\end{alignedat}\\[-5pt]
	\begin{alignedat}{5}
	b_{1}&&\cdots &&b_{j}&\cdots&b_{N}
	\end{alignedat}
	\end{gathered};\eta\right]&=(b_{j}-1)\;{}_{M}F_{N}\left[
	\begin{gathered}
\begin{alignedat}{3}
a_{1}&&\cdots &&a_{M}
\end{alignedat}\\[-5pt]
\begin{alignedat}{5}
b_{1}&&\cdots &&b_{j}-1&\cdots&b_{N}
\end{alignedat}
\end{gathered}
	;\eta\right],\\
	\label{eq:hyper3}
	{\frac {\rm {d}}{{\rm {d}}\eta}}\;{}_{M}F_{N}\left[{\begin{matrix}a_{1}&\cdots &a_{M}\\b_{1}&\cdots &b_{N}\end{matrix}};\eta\right]&=\kappa\;{}_{M}F_{N}\left[{\begin{matrix}a_{1}+1&\cdots &a_{M}+1\\b_{1}+1&\cdots &b_{N}+1\end{matrix}};\eta\right], &\kappa:={\frac {\prod _{i=1}^{M}a_{i}}{\prod _{j=1}^{N}b_{j}}}.
\end{align}
From these three equations we also derive
\begin{align}\label{eq:edo_hyper}
\eta\prod _{n=1}^{M}\left(\eta{\frac {\rm {d}}{{\rm {d}}\eta}}+a_{n}\right)u&=\eta{\frac {\rm {d}}{{\rm {d}}\eta}}\prod _{n=1}^{N}\left(\eta{\frac {\rm {d}}{{\rm {d}}\eta}}+b_{n}-1\right)u, & u:={}_{M}F_{N}\left[{\begin{matrix}a_{1}&\cdots &a_{M}\\b_{1}&\cdots &b_{N}\end{matrix}};\eta\right].
\end{align}
In \eqref{eq:hyper1} and \eqref{eq:hyper2} we have basic relations between contiguous generalized hypergeometric functions and its derivatives.

For the analysis of these equations let us introduce the shift operators  in the parameters
$\{a_i\}_{i=1}^M$ and $\{b_j\}_{j=1}^N$. Thus, given a function $f\left[{\begin{smallmatrix}a_{1}&\cdots &a_{M}\\b_{1}&\cdots &b_{N}\end{smallmatrix}}\right]$ of these parameters we introduce the shifts ${}_i T$ and $T_j$ as follows
\begin{align*}
{}_i T f\left[\begin{gathered}
\begin{alignedat}{5}
a_{1}&&\cdots &&a_{i}&\cdots &&a_{M}
\end{alignedat}\\[-5pt]
\begin{alignedat}{3}
b_{1}&&\cdots &&b_{N}
\end{alignedat}
\end{gathered}\right]&=f\left[\begin{gathered}
\begin{alignedat}{5}
a_{1}&&\cdots &&a_{i}+1&\cdots &&a_{M}
\end{alignedat}\\[-5pt]
\begin{alignedat}{3}
b_{1}&&\cdots &&b_{N}
\end{alignedat}
\end{gathered}\right],&
T_jf\left[	\begin{gathered}
\begin{alignedat}{3}
a_{1}&&\cdots &&a_{M}
\end{alignedat}\\[-5pt]
\begin{alignedat}{5}
b_{1}&&\cdots &&b_{j}&\cdots&b_{N}
\end{alignedat}
\end{gathered}\right]&=f\left[	\begin{gathered}
\begin{alignedat}{3}
a_{1}&&\cdots &&a_{M}
\end{alignedat}\\[-5pt]
\begin{alignedat}{5}
b_{1}&&\cdots &&b_{j}-1&\cdots&b_{N}
\end{alignedat}
\end{gathered}\right]
\end{align*}
and a total shift  $T={}_1T\cdots {}_MT \,T_1^{-1}\cdots T_N^{-1}$; i.e,
\begin{align*}
Tf\left[{\begin{matrix}a_{1}&\cdots &a_{M}\\b_{1}&\cdots &b_{N}\end{matrix}}\right]:=f\left[{\begin{matrix}a_{1}+1&\cdots &a_{M}+1\\b_{1}+1&\cdots &b_{N}+1\end{matrix}}\right].
\end{align*}
Then, we find:
\begin{theorem}[Hypergeometric relations]\label{pro:Hypergeometric relations}
	The moment matrix $G=(\rho_{n+m})_{n,n\in\N_0}$ of a weight satisfying \eqref{eq:Pearson} fulfills the following hypergeometric relations
\begin{subequations}\label{eq:Gram_hyper}
		\begin{align}
\label{eq:Gram_hyper1}	(\Lambda+a_iI)G&=a_i \;{}_i T G,\\
\label{eq:Gram_hyper2}		(\Lambda+(b_j -1)I)G&=(b_j-1)T_jG,\\
\label{eq:Gram_hyper3}		\Lambda G &=\kappa B (T G) B^\top
	\end{align}
\end{subequations}
	with  $B$ given in \eqref{eq:Pascal}.
\end{theorem}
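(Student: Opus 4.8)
The plan is to mimic the structure of the proof of Theorem~\ref{teo:symmetry_Gram}, working directly with the moment realization $G=\sum_{k\ge 0}\chi(k)\chi(k)^\top w(k)$ of the moment matrix, but now exploiting the three contiguous relations \eqref{eq:hyper1}--\eqref{eq:hyper3} instead of the Pearson equation. The first observation I would record is that each parameter shift $_iT$, $T_j$, or the total shift $T$ changes the weight $w(z)$ in \eqref{eq:Pearson_weight} in a completely explicit way: from $(\alpha)_z=\Gamma(\alpha+z)/\Gamma(\alpha)$ one gets, for instance, $\frac{(a_i+1)_z}{(a_i)_z}=\frac{a_i+z}{a_i}$, so that $_iT\,w(z)=\frac{a_i+z}{a_i}\,w(z)$; similarly $T_j\,w(z)=\frac{b_j-1}{b_j-1+z}\,w(z)$, and for the total shift $T\,w(z)=\frac{w(z-1)}{\kappa}\,\big(\text{shift built into }\eta\big)$ — more precisely the total shift sends $w(z)\mapsto \tfrac1\kappa\, z\, w(z)\big/\eta$ up to the $\Gamma(z+1)$ in the denominator, i.e. it is essentially a multiplication by $z$ together with the identity $\frac{\d}{\d\eta}w=\kappa\,(Tw)$ read off from \eqref{eq:hyper3}. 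The cleanest route is to avoid fighting with these normalizations by instead transporting the hypergeometric identities \eqref{eq:hyper1}--\eqref{eq:hyper3} up to the level of the $0$-th moment $\rho_0$, and then up to all moments $\rho_n=\vartheta_\eta^n\rho_0$, using that $\vartheta_\eta$ commutes with $_iT$, $T_j$, and $T$ (shifting a parameter does not interfere with $\eta\partial_\eta$).

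Concretely, here is the sequence of steps. \emph{Step 1.} Apply $\vartheta_\eta^n$ to each of \eqref{eq:hyper1}, \eqref{eq:hyper2}, and (after rewriting \eqref{eq:hyper3} as $\vartheta_\eta\,{}_MF_N=\eta\kappa\,T({}_MF_N)$, equivalently $\vartheta_\eta\,\rho_0=\eta\kappa\,(T\rho_0)$) to \eqref{eq:hyper3}; using $[\vartheta_\eta,{}_iT]=[\vartheta_\eta,T_j]=[\vartheta_\eta,T]=0$ and $\rho_n=\vartheta_\eta^n\rho_0$, this yields the scalar moment identities $(\rho_{n+1}+a_i\rho_n)=a_i\,({}_iT\rho_n)$, $(\rho_{n+1}+(b_j-1)\rho_n)=(b_j-1)\,(T_j\rho_n)$, and $\rho_{n+1}=\eta\kappa\,\vartheta_\eta^n(T\rho_0)$; for the third one I additionally use $\vartheta_\eta^n(\eta f)=\sum_{l}\binom{n}{l}\eta\,\vartheta_\eta^{\,l}f$-type Leibniz expansions — in fact the slicker identity is $\eta\vartheta_\eta^n=\vartheta_\eta^n\eta-\text{(lower)}$, but the clean statement that survives is $\rho_{n+1}=\sum_{l\ge 0}\binom{n}{l}\,\kappa\,\eta\,(T\rho_l)$, which is exactly a Pascal-matrix convolution. \emph{Step 2.} Re-assemble these scalar identities into matrix identities for $G=(\rho_{n+m})$. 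Because $(\Lambda G)_{n,m}=\rho_{n+m+1}$ and $(GI)_{n,m}=\rho_{n+m}$, the first two moment identities immediately give \eqref{eq:Gram_hyper1} and \eqref{eq:Gram_hyper2} upon noting $_iT$ and $T_j$ act entrywise on $G$. For \eqref{eq:Gram_hyper3}, the convolution $\rho_{n+m+1}=\sum_{l}\binom{n+m}{l}\kappa\eta\,(T\rho_l)$ must be split using the Vandermonde-type identity $\binom{n+m}{l}=\sum_{p+q=l}\binom{n}{p}\binom{m}{q}$, which turns the single sum into $\big(\kappa\eta\, B\,(T G)\,B^\top\big)_{n,m}$ with $B$ the Pascal matrix of \eqref{eq:Pascal}; tracking the factor $\eta$ and absorbing it into the definition of $T$ on the weight (the total shift already includes the $\eta^z\mapsto\eta^z$ piece) gives $\Lambda G=\kappa\,B\,(TG)\,B^\top$.

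The main obstacle I anticipate is entirely bookkeeping rather than conceptual: getting the $\eta$-powers and the $\kappa$ factor to land correctly in \eqref{eq:Gram_hyper3}. The relation \eqref{eq:hyper3} is a plain $\frac{\d}{\d\eta}$, not a $\vartheta_\eta$, so converting it to the $\vartheta_\eta$-language that governs the moments introduces an extra factor $\eta$, and then pushing $\vartheta_\eta^n$ past that $\eta$ via Leibniz is precisely what generates the binomial coefficients that reconstitute the Pascal matrices — so the ``error terms'' in the naive commutation are not errors at all but the content of the theorem. I would therefore be careful to do Step 1 for \eqref{eq:hyper3} by induction on $n$, checking the base case $n=0$ (which is \eqref{eq:hyper3} itself) and the inductive step $\vartheta_\eta\rho_{n+1}=\rho_{n+2}$ against the Pascal recursion $\binom{n+1}{l}=\binom{n}{l}+\binom{n}{l-1}$, which matches the action of $\vartheta_\eta$ on $\eta\,(T\rho_l)$. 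Alternatively — and this is the route the remark after Theorem~\ref{teo:symmetry_Gram} hints at, and what Proposition~\ref{pro:Gram_eta} is set up to deliver — one can bypass the induction: since $\vartheta_\eta G=\Lambda G$, the operator identity \eqref{eq:edo_hyper} applied to $\rho_0$ and dressed up to $G$ already encodes \eqref{eq:Gram_hyper3} as the ``$\Lambda$-component'' of a factorized form, and \eqref{eq:Gram_hyper1}--\eqref{eq:Gram_hyper2} are the individual factors $(\vartheta_\eta+a_i)$ and $(\vartheta_\eta+b_j-1)$ read through $\vartheta_\eta=\Lambda$ acting on $G$. Either way, once the scalar moment relations of Step 1 are in hand, Step 2 is a routine Vandermonde convolution and the theorem follows.
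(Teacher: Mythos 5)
Your proposal is correct and follows essentially the same route as the paper's proof: the same commutation of the parameter shifts with $\vartheta_\eta$ for \eqref{eq:Gram_hyper1}--\eqref{eq:Gram_hyper2}, the same identity $\vartheta_\eta^n\eta=\eta(\vartheta_\eta+1)^n$ (the paper uses it in the equivalent form $\frac{\d}{\d\eta}\vartheta_\eta^{n}=(\vartheta_\eta+1)^n\frac{\d}{\d\eta}$), the binomial expansion plus Chu--Vandermonde to reconstitute the Pascal matrices, and $\vartheta_\eta G=\Lambda G$ from Proposition \ref{pro:Gram_eta}; working at the level of the scalar moments $\rho_n$ rather than entrywise on $G$ is an immaterial difference. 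The stray factor of $\eta$ you worry about in \eqref{eq:Gram_hyper3} is not a defect of your argument: the paper's own computation likewise lands on $\frac{\d}{\d\eta}G=\kappa B(TG)B^\top$, so the same bookkeeping ambiguity between $\frac{\d}{\d\eta}$ and $\vartheta_\eta$ is present there too.
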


\begin{proof}
	We first prove \eqref{eq:Gram_hyper1}. For that aim we use that $G_{n,m}=\vartheta_\eta^{n+m}\big({}_MF_N\left[{\begin{smallmatrix}a_{1}&\cdots &a_{M}\\b_{1}&\cdots &b_{N}\end{smallmatrix}};\eta\right]\big)$ so that
	\begin{align*}
	(\vartheta_\eta+a_i)G_{n,m}&=(\vartheta_\eta+a_i)\vartheta_\eta^{n+m}\Big({}_MF_N\left[{\begin{matrix}a_{1}&\cdots &a_{M}\\b_{1}&\cdots &b_{N}\end{matrix}};\eta\right]\Big)\\
	&=\vartheta_\eta^{n+m}(\vartheta_\eta+a_i)\Big({}_MF_N\left[{\begin{matrix}a_{1}&\cdots &a_{M}\\b_{1}&\cdots &b_{N}\end{matrix}};\eta\right]\Big)\\
	&=a_{i}\vartheta_\eta^{n+m}\Big({}_{M}F_{N}\left[\begin{gathered}
	\begin{alignedat}{5}
	a_{1}&&\cdots &&a_{i}+1&\cdots &&a_{M}
	\end{alignedat}\\[-5pt]
	\begin{alignedat}{3}
	b_{1}&&\cdots &&b_{N}
	\end{alignedat}
	\end{gathered};\eta\right]\Big)\\
	&=a_i\;{}_iTG_{n,m}.
	\end{align*}
	Thus, $(\vartheta_\eta+a_i)G=a_i \;{}_iTG$, and recalling $	\vartheta_\eta G=\Lambda G$ we get the result.  Relation \eqref{eq:Gram_hyper2} is proved similarly. Finally, \eqref{eq:Gram_hyper2} follows from \eqref{eq:hyper3}, the novelty is that in \eqref{eq:hyper3} we have $\frac{\d}{\d \eta}=\eta^{-1}\vartheta_\eta$, which do not commute with $\vartheta_\eta$. Observe that the relation
	\begin{align*}
\frac{\d}{\d \eta}\eta-\eta \frac{\d}{\d \eta}=1
	\end{align*}
	can be written
$	\eta^{-1}\vartheta_\eta\eta =\vartheta_\eta+1$.
	Hence,
		$	\eta^{-1}\vartheta_\eta^n\eta =(\eta^{-1}\vartheta_\eta\eta )^n=(\vartheta_\eta+1)^n$,
	that, in turn, implies
			\begin{align}\label{eq:vartheta_der}
	\frac{\d}{\d \eta}\vartheta_\eta^{n} =(\vartheta_\eta+1)^n \frac{\d}{\d \eta},
	\end{align}
	Thus,
	\begin{align*}
	\frac{\d G_{n,m}}{\d \eta}&=\frac{\d }{\d \eta}\vartheta_\eta^{n+m}\Big({}_MF_N\left[{\begin{matrix}a_{1}&\cdots &a_{M}\\b_{1}&\cdots &b_{N}\end{matrix}};\eta\right]\Big)=(\vartheta_\eta+1)^{n+m}\frac{\d }{\d \eta}\Big({}_MF_N\left[{\begin{matrix}a_{1}&\cdots &a_{M}\\b_{1}&\cdots &b_{N}\end{matrix}};\eta\right]\Big)\\	&=(\vartheta_\eta+1)^{n+m}
	\kappa\;{}_{M}F_{N}\left[{\begin{matrix}a_{1}+1&\cdots &a_{M}+1\\b_{1}+1&\cdots &b_{N}+1\end{matrix}};\eta\right]\\
	\\&=\kappa
\sum_{k=0}^{n+m}
\binom{n+m}{k}\vartheta_\eta^k{}_{M}F_{N}\left[{\begin{matrix}a_{1}+1&\cdots &a_{M}+1\\b_{1}+1&\cdots &b_{N}+1\end{matrix}};\eta\right].
	\end{align*}
	Using the  Chu--Vandermonde identity
\begin{align*}
\binom{n+m}{k}=\sum_{l=0}^{k}\binom{n}{l}\binom{m}{k-l},
\end{align*}
we find
	\begin{align*}
\frac{\d G_{n,m}}{\d \eta}&=\kappa
\sum_{k=0}^{n+m}\sum_{l=0}^{k}\binom{n}{l}\binom{m}{k-l}
\vartheta_\eta^{l+k-l}{}_{M}F_{N}\left[{\begin{matrix}a_{1}+1&\cdots &a_{M}+1\\b_{1}+1&\cdots &b_{N}+1\end{matrix}};\eta\right]\\
&=\kappa
\sum_{k=0}^{n+m}\sum_{l=0}^{k}\binom{n}{l}\binom{m}{k-l}
TG_{l,k-l}
\\
&=\kappa
\sum_{l=0}^{n}\sum_{k=0}^{m}\binom{n}{l}
TG_{l,k}\binom{m}{k}
\end{align*}
from where the result follows.\enlargethispage{2cm}

Finally,  using $\vartheta_\eta-1=\eta\vartheta_\eta\eta^{-1}$, we get $(\vartheta_\eta-1)^n\eta=\eta(\vartheta_\eta)^n$ and 
we  workout \eqref{eq:edo_hyper} as follows
\begin{align*}%
\eta\prod _{n=1}^{M}\left(\vartheta_\eta+a_{n}\right) G_{n,m}&=\eta\prod _{n=1}^{M}\left(\vartheta_\eta+a_{n}\right)\vartheta_\eta^{n+m}\Big({}_{M}F_{N}\left[{\begin{matrix}a_{1}&\cdots &a_{M}\\b_{1}&\cdots &b_{N}\end{matrix}};\eta\right]\Big)\\&=(\vartheta_\eta-1)^{n+m}\eta\prod _{n=1}^{M}\left(\vartheta_\eta+a_{n}\right)\Big({}_{M}F_{N}\left[{\begin{matrix}a_{1}&\cdots &a_{M}\\b_{1}&\cdots &b_{N}\end{matrix}};\eta\right]\Big)
\\&=\vartheta_\eta\prod _{n=1}^{N}\left(\vartheta_\eta+b_{n}-1\right)(\vartheta_\eta-1)^{n+m}\Big({}_{M}F_{N}\left[{\begin{matrix}a_{1}&\cdots &a_{M}\\b_{1}&\cdots &b_{N}\end{matrix}};\eta\right]\Big)\\&=
\vartheta_\eta\prod _{n=1}^{N}\left(\vartheta_\eta+b_{n}-1\right)\sum_{k=0}^{n+m}\binom{n+m}{k}(-1)^k\vartheta_\eta^k\Big({}_{M}F_{N}\left[{\begin{matrix}a_{1}&\cdots &a_{M}\\b_{1}&\cdots &b_{N}\end{matrix}};\eta\right]\Big)\\&=
\vartheta_\eta\prod _{n=1}^{N}\left(\vartheta_\eta+b_{n}-1\right)\sum_{l=0}^{n}\sum_{k=0}^{m}(-1)^l\binom{n}{l}
G_{l,k}(-1)^k\binom{m}{k},
\end{align*}
and, consequently, we finally deduce
\begin{align*}
\sigma(\Lambda) G=B^{-1}\theta(\Lambda)GB^{-\top},
\end{align*}
and result follows at once.
\end{proof}

\begin{rem}
From \eqref{eq:edo_hyper} we derive, in an alternative manner, the relation	\eqref{eq:Gram symmetry}.
\end{rem}

\subsection{Discrete lattice Toda equation }
The shifts in the hypergeometric parameters induce corresponding transformations on the discrete orthogonal polynomials. In order to describe them we introduce the following  semi-infinite matrices
	\begin{align*}
	\tensor[_i]{\Omega}{}&:=S(\tensor[_i]{T}{}S)^{-1}, & i&\in\{1,\dots,M\},&\Omega_k&:=S\,(T_kS)^{-1},& k&\in\{1,\dots,N\},
	\end{align*}
so that the following connection formulas are fulfilled
\begin{align}
	\label{eq:connection1}
	\tensor[_i]{\Omega}{}\hspace*{4pt}\tensor[_i]{T}{}P(z)&=P(z), & i&\in\{1,\dots,M\},\\
	\label{eq:connection2}
	\Omega_j \,T_jP(z)&=P(z),& j&\in\{1,\dots,N\}.
\end{align}
In \cite{Manas} we proved that
	\begin{align*}
			\tensor[_i]{\Omega}{}&=
			\left(\begin{NiceMatrix}[columns-width = 0.5cm,]
				1 &0 &\Cdots&\\
				\frac{1}{a_i} \frac{H_1}{{}_iTH_0} & 1& \Ddots\\
				0 &\frac{1}{a_i} \frac{H_2}{{}_iTH_1} & \Ddots&\\
				\Vdots &\Ddots &\Ddots & 
			\end{NiceMatrix}\right),&
			\Omega_j&=
			\left(\begin{NiceMatrix}[columns-width = 0.5cm,]
				1 &0 &\Cdots&\\
				\frac{1}{b_j-1} \frac{H_1}{T_jH_0} & 1& \Ddots\\
				0 &\frac{1}{b_j-1} \frac{H_2}{T_jH_1} & \Ddots&\\
				\Vdots &\Ddots &\Ddots & 
			\end{NiceMatrix}\right),
	\end{align*}
The connection formulas  \eqref{eq:connection1}	 and \eqref{eq:connection2}	
for contiguous hypergeometric parameters imply a nonlinear compatibility equations that leads to a generalized lattice Toda equation found in \cite{nijhoff}. We use the following notation 
\begin{align*}
	u_n=H_{n-1}\left[{\begin{smallmatrix}a_{1}&\cdots &a_{M}\\b_{1}&\cdots &b_{N}\end{smallmatrix}};\eta\right]
	\end{align*}
We now consider three  variables $(n,r,s)$, where $r,s$ are any couple of variables taken from the set of hypergeometric parameters $\{a_1,\dots,a_M,b_1,\dots,b_N\}$ and denote
the corresponding ``shifts'' in $n,r,s$ as follows 
\begin{align*}
	\bar u_n(r,s) &=u_{n+1}(r,s),  &\hat u_n(r,s)&=\hat a\hat u_n(r\pm1,s), &\tilde u_n(r,s)&=\tilde a  u(r,s\pm1),
\end{align*}
where the $\pm$ signs is a $+$ if the corresponding variable belongs to $\{a_i\}_{i=1}^M$ or a $-$ if it belongs to $\{b_j\}_{j=1}^M$, and  the constants $\hat a,\tilde a$ are taken from $\{a_i\}_{i=1}^N$ and $\{b_j-1\}_{j=1}^N$ according to the choice selected for the variables $r$ and $s$. We denote by $\Omega^{(r)}$ and $\Omega^{(s)}$, the corresponding  matrices $\Omega$'s  taken from  $\{{}_i\Omega\}_{i=1}^M$ and $\{\Omega_j\}_{j=1}^N$, depending on the variables picked  $r$ and $s$. 

\begin{theorem}[Nijhoff--Capel
	 discrete lattice Toda equations]\label{teo:Nijhoff-Capel}
	The squared norms satisfy the following nonlinear equations linking  contiguous hypergeometric parameters
	\begin{align}\label{eq:Nijhoff-Capel}
		\frac{\hat{\bar u} -
			\tilde{\bar u}}{\bar u}=\tilde{\hat u}\Big(	\frac{1}{\tilde u} -	\frac{1}{\hat u} \Big).
	\end{align} 
\end{theorem}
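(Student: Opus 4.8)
The statement to prove is the Nijhoff--Capel lattice equation \eqref{eq:Nijhoff-Capel} for the quantities $u_n = H_{n-1}$ viewed as functions on the three-dimensional lattice generated by the shift in $n$ (the bar) and the two contiguous-parameter shifts (the hat and the tilde). The natural route is to exploit the connection formulas \eqref{eq:connection1}--\eqref{eq:connection2}: each of the matrices $\tensor[_i]{\Omega}{}$ and $\Omega_j$ is lower bidiagonal with explicit entries built from ratios $H_{n}/({}_iTH_{n-1})$ or $H_n/(T_jH_{n-1})$, i.e.\ from $\bar u/\hat u$ or $\bar u/\tilde u$ up to the parameter $a_i$ or $b_j-1$. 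The compatibility I want is the statement that applying two different contiguous shifts to $P(z)$ commutes, which at the matrix level reads
\begin{align*}
\Omega^{(r)}\,\big(T^{(r)}\Omega^{(s)}\big)=\Omega^{(s)}\,\big(T^{(s)}\Omega^{(r)}\big),
\end{align*}
since both sides are the connection matrix from $P$ with both parameters shifted back to the unshifted $P(z)$. First I would write this identity out entry by entry.

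\textbf{Key steps.} Writing $\Omega^{(r)} = I + \Lambda^\top\, \ell^{(r)}$ with $\ell^{(r)}$ the diagonal matrix whose $n$-th entry is $\tfrac{1}{r}\,\tfrac{H_{n+1}}{T^{(r)}H_n}$ (and similarly for $s$, with the convention that $r,s$ here stand for the parameter or the parameter-minus-one as appropriate), the product $\Omega^{(r)}(T^{(r)}\Omega^{(s)})$ expands as $I + \Lambda^\top(\ell^{(r)} + T^{(r)}\ell^{(s)}) + (\Lambda^\top)^2 (T_-\ell^{(r)})(T^{(r)}\ell^{(s)})$, using the ladder relation \eqref{eq:ladder_lambda}. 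Equating this with the same expression with $r\leftrightarrow s$ swapped gives two families of scalar identities: the coefficient of $\Lambda^\top$ yields
\begin{align*}
\ell^{(r)} + T^{(r)}\ell^{(s)} = \ell^{(s)} + T^{(s)}\ell^{(r)},
\end{align*}
and the coefficient of $(\Lambda^\top)^2$ yields $(T_-\ell^{(r)})(T^{(r)}\ell^{(s)}) = (T_-\ell^{(s)})(T^{(s)}\ell^{(r)})$, which turns out to be a consequence of the first together with a shift. I would then translate the first identity into the $u$-notation: $\ell^{(r)}_{n}$ is, in the lattice notation of the theorem, $\dfrac{\bar u}{r\,\hat u}$ evaluated at site $n$ (for $s$: $\dfrac{\bar u}{s\,\tilde u}$), so the relation becomes, after clearing the constants $r,s$ — which are exactly the $\hat a,\tilde a$ of the theorem statement — a bilinear relation among $\bar u, \hat u, \tilde u, \hat{\bar u}, \tilde{\bar u}, \hat{\tilde u}$. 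A short manipulation (multiplying through by $\hat u\,\tilde u$ and recognizing $\hat{\tilde u}=\tilde{\hat u}$) rearranges it into precisely \eqref{eq:Nijhoff-Capel}.

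\textbf{The main obstacle.} The genuinely delicate point is bookkeeping the signs and the shift-by-one conventions: a parameter $b_j$ contributes $b_j-1$ rather than $b_j$, the shift $T_j$ \emph{decreases} $b_j$ while ${}_iT$ \emph{increases} $a_i$, and the ``$\pm$'' in the definitions of $\hat u_n, \tilde u_n$ must be tracked so that the constant prefactors in the bidiagonal entries of $\Omega^{(r)},\Omega^{(s)}$ land on the correct side of the final equation. I would handle this by first doing the computation purely in terms of the abstract matrices $\Omega^{(r)} = S (T^{(r)}S)^{-1}$, deriving the compatibility $\Omega^{(r)}(T^{(r)}\Omega^{(s)}) = \Omega^{(s)}(T^{(s)}\Omega^{(r)})$ directly from associativity of $S (T^{(r)}T^{(s)}S)^{-1}$ (this is automatic and sign-free), and only at the very end substituting the explicit entries from \cite{Manas}; that way the only place conventions enter is the final scalar substitution. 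A secondary check is that the second-diagonal identity is not independent — I expect it to follow by applying $T_-$ to the first identity and using that $\ell^{(r)},\ell^{(s)}$ are diagonal, but this should be verified rather than assumed.
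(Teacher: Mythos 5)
Your proposal is correct and essentially reproduces the paper's proof: the same compatibility identity $\Omega^{(s)}\tilde{\Omega}^{(r)}=\Omega^{(r)}\hat{\Omega}^{(s)}$ for the bidiagonal connection matrices, expanded with the ladder relations and compared along the first subdiagonal, gives $\frac{\bar u}{\tilde u}+\frac{\tilde{\bar u}}{\tilde{\hat u}}=\frac{\bar u}{\hat u}+\frac{\hat{\bar u}}{\hat{\tilde u}}$ and hence \eqref{eq:Nijhoff-Capel} after using $\hat{\tilde u}=\tilde{\hat u}$. The only (harmless) variation is that you derive the compatibility by telescoping $S\big(T^{(r)}T^{(s)}S\big)^{-1}$ rather than from the commutation $\tilde{\hat P}=\hat{\tilde P}$ as the paper does, and your concern about the second-subdiagonal identity is moot, since the matrix identity holds outright and only its first subdiagonal is needed for the theorem.
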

\begin{proof}
The connection formulas are
\begin{align*}
	\Omega^{(r)}\hat P&= P, & 	\Omega^{(s)}\tilde P&= P. 
\end{align*}
Then, we have
\begin{align*}
	\tilde{\Omega}^{(r)}\tilde{\hat P}=\tilde P, 
\end{align*}
so that
\begin{align*}
\Omega^{(s)}	\tilde{\Omega}^{(r)}\tilde{\hat P}=\Omega^{(s)}\tilde P=P, 
\end{align*}
and the compatibility $\tilde{\hat P}=\hat{ \tilde P}$ leads to the nonlinear condition
\begin{align}\label{eq:compatibility_contiguous}
		\Omega^{(s)}	\tilde{\Omega}^{(r)}= 	\Omega^{(r)}	\hat{\Omega}^{(s)}.
\end{align}
Then, as we can write
\begin{align*}
	\Omega^{(r)}&=
\left(\begin{NiceMatrix}[columns-width = 0.2cm]
	1 &0 &\Cdots&\\
\frac{\bar u_1}{\hat u_1} & 1& \Ddots\\
	0 & \frac{\bar u_2}{\hat u_2} & \Ddots&\\
	\Vdots &\Ddots &\Ddots & 
\end{NiceMatrix}\right),& 	
\Omega^{(s)}&=
\left(\begin{NiceMatrix}[columns-width = 0.2cm]
1 &0 &\Cdots&\\
\frac{\bar u_1}{\tilde u_1} & 1& \Ddots\\
0 &\frac{\bar u_2}{\tilde u_2} & \Ddots&\\
\Vdots &\Ddots &\Ddots & 
\end{NiceMatrix}\right),
\end{align*}
Equation \eqref{eq:compatibility_contiguous} is
\begin{align*}
\left(\begin{NiceMatrix}[columns-width = 0.2cm]
	1 &0 &\Cdots&\\
	\frac{\bar u_1}{\tilde u_1} & 1& \Ddots\\
	0 &\frac{\bar u_2}{\tilde u_2} & \Ddots&\\
	\Vdots &\Ddots &\Ddots & 
\end{NiceMatrix}\right)\left(\begin{NiceMatrix}[columns-width = 0.2cm]
1 &0 &\Cdots&\\
\frac{\tilde{\bar u}_1}{\tilde{\hat u}_1} & 1& \Ddots\\
0 & \frac{\tilde{\bar u}_2}{\tilde{\hat u}_2} & \Ddots&\\
\Vdots &\Ddots &\Ddots & 
\end{NiceMatrix}\right)=\left(\begin{NiceMatrix}[columns-width = 0.2cm]
1 &0 &\Cdots&\\
\frac{\bar u_1}{\hat u_1} & 1& \Ddots\\
0 & \frac{\bar u_2}{\hat u_2} & \Ddots&\\
\Vdots &\Ddots &\Ddots & 
\end{NiceMatrix}\right)\left(\begin{NiceMatrix}[columns-width = 0.2cm]
1 &0 &\Cdots&\\
\frac{\hat{\bar u}_1}{\hat{\tilde u}_1} & 1& \Ddots\\
0 &\frac{\hat{\bar u}_2}{\hat{\tilde u}_2} & \Ddots&\\
\Vdots &\Ddots &\Ddots & 
\end{NiceMatrix}\right),
\end{align*}
and, consequently, we deduce
\begin{align*}
	\frac{\bar u}{\tilde u} +	\frac{\tilde{\bar u}}{\tilde{\hat u}}=\frac{\bar u}{\hat u} +\frac{\hat{\bar u}}{\hat{\tilde u}}, 
\end{align*}
and the result follows immediately.
\end{proof}

\begin{rem}
	Equation \eqref{eq:Nijhoff-Capel} is the generalized discrete lattice Toda equation described for the first time by Nijhoff and Capel \cite{nijhoff}, that taking adequate continuous limits in the $r$ and $s$ variables (hypergeometric parameters) recovers the 2D Toda equation. This is a canonical equation among the difference equations in three variables involving up to second order differences  of octahedral type, consistent on the 4D lattice  \cite{adler}, it appears as the type V 
of the octahedron type integrable discrete equations in \S 3.9 of  the book \cite{Hietarinta} .  
\end{rem}

Now we study  compatibility of the recursion relation and the connection formula for contiguous parameters; i.e., the compatibility of
\begin{align*}
	zP&=JP,\\
\hat P&=	\hat a\frac{\omega^{(r)}}{z+r} P.
\end{align*}
These pair of relations lead  to
\begin{align*}
	\hat J\hat P(z)&=z\hat P (z)& &\Rightarrow &\hat J \frac{\omega^{(r)}}{z+r} P(z)&= z\frac{\omega^{(r)}}{z+r}P(z)=\frac{\omega^{(r)}}{z+r}JP(z),
\end{align*}
and therefore we find the  following compatibility equation
\begin{align*}
	\hat J \omega^{(r)}&=\omega^{(r)}J.
\end{align*}

Using the notation
\begin{align*}
	u_n&=H_{n-1}, & v_n&=\beta_{n-1}
\end{align*} 
so that $\gamma_n=\frac{\bar u_n}{u_n}$ and we have the following expressions
	\begin{align*}
		J&=\left(\begin{NiceMatrix}[columns-width = auto]
			v_1 & 1& 0&\Cdots& \\
			\frac{\bar u_1}{u_1} &v_2 & 1 &0&\Cdots\\
			0 &\Ddots &\Ddots &\Ddots &\Ddots\\
			\Vdots&\Ddots& & & 
		\end{NiceMatrix}\right),&
\omega^{(r)}&=	\left(\begin{NiceMatrix}[columns-width = auto]
	\frac{\hat u_1}{ u_1} &1 &0 &\Cdots\\
		0&  \frac{\hat u_2}{ u_2} &1&\Ddots\\
		\Vdots & \Ddots &\Ddots & \Ddots
	\end{NiceMatrix}\right),
\end{align*}
 the compatibility reads
\begin{align*}
\hspace*{-1cm}\left(\begin{NiceMatrix}[columns-width = 0.1cm]
\hat	v_1 & 1& 0&\Cdots& \\
	\frac{\hat{\bar u}_1}{\hat u_1} &\hat v_2 & 1 &0&\Cdots\\
	0 &\Ddots &\Ddots &\Ddots &\Ddots\\
	\Vdots&\Ddots& & & 
\end{NiceMatrix}\right)\left(\begin{NiceMatrix}[columns-width = auto]
\frac{\hat u_1}{ u_1} &1 &0 &\Cdots\\
0&  \frac{\hat u_2}{ u_2} &1&\Ddots\\
\Vdots & \Ddots &\Ddots & \Ddots
\end{NiceMatrix}\right)=\left(\begin{NiceMatrix}[columns-width = auto]
\frac{\hat u_1}{ u_1} &1 &0 &\Cdots\\
0&  \frac{\hat u_2}{ u_2} &1&\Ddots\\
\Vdots & \Ddots &\Ddots & \Ddots
\end{NiceMatrix}\right)\left(\begin{NiceMatrix}[columns-width =  0.1cm]
v_1 & 1& 0&\Cdots& \\
\frac{\bar u_1}{u_1} &v_2 & 1 &0&\Cdots\\
0 &\Ddots &\Ddots &\Ddots &\Ddots\\
\Vdots&\Ddots& & & 
\end{NiceMatrix}\right)
\end{align*}
and  we find $\hat v_1\frac{\hat u_1}{u_1}=v_1\frac{\hat u_1}{u_1}+\frac{\bar u_1}{u_1}$ and
\begin{align*}
	\frac{\hat{\bar u}_n}{\hat u_n}
	+\hat{\bar v}_n\frac{\hat{\bar u}_n}{\bar u_n}&=	
	\frac{\bar{\bar u}_n}{\bar  u_n}+ \bar v_n\frac{\hat{\bar u}_n}{\bar u_n},\\
			\hat{v}_n+\frac{\hat{\bar u}_n}{\bar u_n}&=\bar v_n+\frac{\hat u_n}{u_n}
\end{align*}
for $ n\in\N$.
Thus, we find

\begin{pro}
The squared norms $u_n=H_{n-1}$ and the recursion coefficients $v_n=\beta_{n-1}=p^1_{n-1}-p^1_n$, satisfy the following system of nonlinear   difference equations
\begin{align*}
	\hat{\bar v}-\bar v&=\frac{\bar{\bar u}}{\hat{\bar u}}-\frac{{\bar u}}{\hat u},\\
\hat v-\bar v&=\frac{\hat u}{u}-\frac{\hat {\bar u}}{\bar u}.
\end{align*}
\end{pro}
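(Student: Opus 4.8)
The plan is to read off the claimed difference equations directly from the matrix compatibility equation $\hat J\,\omega^{(r)}=\omega^{(r)}J$, which has already been written out explicitly just above the statement in terms of the entries $u_n,v_n$. So the whole proof is an entrywise comparison of the two tridiagonal-times-bidiagonal products, combined with a change of notation from $H_{n-1},\beta_{n-1}$ to $u_n,v_n$ and the already-derived dictionary $\gamma_n=\bar u_n/u_n$, $v_n=\beta_{n-1}=p^1_{n-1}-p^1_n$.

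First I would recall the two scalar relations extracted from that matrix identity in the lines preceding the proposition, namely
\begin{align*}
\hat{v}_n+\frac{\hat{\bar u}_n}{\bar u_n}&=\bar v_n+\frac{\hat u_n}{u_n},\\
\frac{\hat{\bar u}_n}{\hat u_n}+\hat{\bar v}_n\frac{\hat{\bar u}_n}{\bar u_n}&=\frac{\bar{\bar u}_n}{\bar u_n}+\bar v_n\frac{\hat{\bar u}_n}{\bar u_n},
\end{align*}
valid for $n\in\N$. The first of these is already exactly the second equation of the proposition, $\hat v-\bar v=\frac{\hat u}{u}-\frac{\hat{\bar u}}{\bar u}$, once the subscript $n$ is suppressed, so nothing is left to do there. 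For the first equation of the proposition I would take the second relation above, and simplify: moving the common term $\bar v_n\,\hat{\bar u}_n/\bar u_n$ to the left and factoring the prefactor $\hat{\bar u}_n/\bar u_n$ out of the terms $\hat{\bar v}_n\,\hat{\bar u}_n/\bar u_n-\bar v_n\,\hat{\bar u}_n/\bar u_n$, one gets $\frac{\hat{\bar u}_n}{\hat u_n}+\frac{\hat{\bar u}_n}{\bar u_n}(\hat{\bar v}_n-\bar v_n)=\frac{\bar{\bar u}_n}{\bar u_n}$. Multiplying through by $\bar u_n/\hat{\bar u}_n$ yields $\frac{\bar u_n}{\hat u_n}+(\hat{\bar v}_n-\bar v_n)=\frac{\bar{\bar u}_n}{\hat{\bar u}_n}$, i.e.\ $\hat{\bar v}_n-\bar v_n=\frac{\bar{\bar u}_n}{\hat{\bar u}_n}-\frac{\bar u_n}{\hat u_n}$, which is the first claimed equation after dropping the index.

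For completeness the plan also covers deriving those two scalar relations from scratch, in case one does not wish to rely on the displayed intermediate computation: multiply out the $2\times2$-block truncations of $\hat J\,\omega^{(r)}$ and $\omega^{(r)}J$, where $J$ has diagonal $(v_n)$, subdiagonal $(\bar u_n/u_n)$ and superdiagonal all $1$, and $\omega^{(r)}$ has diagonal $(\hat u_n/u_n)$ and superdiagonal all $1$; comparing the $(n,n)$ and $(n+1,n)$ entries gives precisely the two relations (the remaining entries are automatically consistent because both sides are lower Hessenberg with the same superdiagonal structure, a point worth a one-line remark). The only genuine subtlety — and hence the main obstacle, though it is a mild one — is bookkeeping the index shifts correctly: one must be careful that $\bar{(\cdot)}$, $\hat{(\cdot)}$, $\tilde{(\cdot)}$ denote the shifts in $n$, in $r$, and in $s$ respectively, that these commute when applied to the $u_n$ (so $\hat{\bar u}=\bar{\hat u}$), and that the entries of $\hat J$ are the $n$-entries of $J$ with the $r$-parameter shifted — so e.g.\ the subdiagonal of $\hat J$ in row $n+1$ is $\hat{\bar u}_n/\hat u_n$, not $\bar{\hat u}_n/\hat u_n$ read off from a wrong row. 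Once the indexing is pinned down the algebra is a two-line manipulation as above, so I would present it tersely.
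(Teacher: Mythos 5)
Your main argument is correct and is essentially the paper's own derivation: the paper obtains exactly the two scalar relations you quote by writing out the compatibility $\hat J\,\omega^{(r)}=\omega^{(r)}J$ entrywise, and the proposition is nothing more than the rearrangement you perform (your algebra for the first equation is right, and the second is verbatim the quoted relation).

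One detail in your optional ``from scratch'' rederivation is off, and as written that check would fail: the $(n+1,n)$ entries of $\hat J\,\omega^{(r)}$ and $\omega^{(r)}J$ are both identically $\hat{\bar u}_n/u_n$, so they carry no information; the second relation, $\hat v_n+\frac{\hat{\bar u}_n}{\bar u_n}=\bar v_n+\frac{\hat u_n}{u_n}$, comes from the \emph{super}diagonal $(n,n+1)$ entries, while the diagonal $(n,n)$ entries give the first relation (with the $\hat v_1\frac{\hat u_1}{u_1}=v_1\frac{\hat u_1}{u_1}+\frac{\bar u_1}{u_1}$ boundary case at $n=1$). Also, neither product is lower Hessenberg: both have two superdiagonals, the second of which equals $1$ on each side and is therefore automatically consistent. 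Since your primary route relies on the already-displayed relations, this slip does not affect the validity of the proof, but the entry bookkeeping should be corrected if you include the independent verification.
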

	
Finally, recalling $\vartheta_{\eta}P=\Phi P$ and $\Omega \hat P=P$,  we get 
\begin{align*}
	\vartheta_{\eta}\Omega=\Phi\Omega-\Omega \hat{\Phi},
\end{align*}
so that
\begin{pro}
	The squared norms $u_n=H_{n-1}$ fulfill
	\begin{align*}
		\vartheta_{\eta}\Big(\frac{\bar u}{\hat u}\Big)=\frac{\bar u}{u}-\frac{\hat{\bar u}}{\hat u}.
	\end{align*}
\end{pro}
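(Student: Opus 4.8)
The plan is to differentiate the connection relation $\Omega\hat P=P$ along the deformation parameter and then cancel the polynomial vector. Applying $\vartheta_\eta$ to $\Omega\hat P(z)=P(z)$ and substituting the two $\eta$-Lax equations $\vartheta_\eta P=\Phi P$ (Proposition \ref{pro:Gram_eta}) and $\vartheta_\eta\hat P=\hat\Phi\hat P$ — the latter being the same statement for the parameter-shifted weight $\hat w$, which is again of generalized hypergeometric type with the \emph{same} $\eta$ — one gets $(\vartheta_\eta\Omega)\hat P+\Omega\hat\Phi\hat P=\Phi\Omega\hat P$. Since $\hat P(z)=\hat S\chi(z)$ with $\hat S$ lower unitriangular and $\{\chi(z)\}_z$ spanning, the semi-infinite matrices multiplying $\hat P$ on either side must coincide, i.e. $\vartheta_\eta\Omega=\Phi\Omega-\Omega\hat\Phi$, the identity displayed just above the statement.

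It remains to compare the first subdiagonals of that matrix equation. From \cite{Manas}, in the notation $u_n=H_{n-1}$, one has $\Omega=I+\Lambda^\top\nu$ with $\nu$ diagonal and with subdiagonal entries $\Omega_{n,n-1}=\bar u_n/\hat u_n$ (the rescaling constant hidden in $\hat u$ cancels in all the ratios below). For $\Phi$ I would first fix its shape: with $\Phi=(\vartheta_\eta S)S^{-1}$, differentiating the Cholesky factorization \eqref{eq:Cholesky} and inserting $\vartheta_\eta G=\Lambda G$ gives $JH=\vartheta_\eta H-\Phi H-H\Phi^\top$; matching strictly lower, diagonal and strictly upper triangular parts (using that $JH$ is tridiagonal) shows that $\Phi=\Lambda^\top\phi$ is a single subdiagonal, where the diagonal $\phi$ carries the recursion coefficients $\gamma_n=\bar u_n/u_n$ up to the sign convention of Proposition \ref{pro:Gram_eta}, while the diagonal part of the matrix identity is $\vartheta_\eta H=\beta H$. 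The identical computation for $\hat w$ gives $\hat\Phi=\Lambda^\top\hat\phi$, with $\hat\phi$ carrying $\hat\gamma_n=\hat{\bar u}_n/\hat u_n$.

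Substituting $\Omega=I+\Lambda^\top\nu$ and the bidiagonal $\Phi,\hat\Phi$ into $\vartheta_\eta\Omega=\Phi\Omega-\Omega\hat\Phi$ and moving diagonal factors through $\Lambda^\top$ via \eqref{eq:ladder_lambda}, the equation splits diagonal by diagonal: the main diagonal is the trivial $0=0$; the second subdiagonal collapses to an identity, since $\bar u_n=u_{n+1}$ and $\hat{\bar u}_n=\hat u_{n+1}$ make the two $(\Lambda^\top)^2$-coefficients equal; and the first subdiagonal reads $\vartheta_\eta\nu=\phi-\hat\phi$. Componentwise, inserting $\gamma_n=\bar u_n/u_n$ and $\hat\gamma_n=\hat{\bar u}_n/\hat u_n$, this is $\vartheta_\eta(\bar u_n/\hat u_n)=\bar u_n/u_n-\hat{\bar u}_n/\hat u_n$, which is the claimed relation in the abbreviated notation of the statement.

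I expect the only step requiring more than bookkeeping to be the explicit description of the $\eta$-Lax matrix $\Phi$ — namely that $\vartheta_\eta$ acts on the Cholesky data as the first Toda flow, so that $\Phi$ is a lone subdiagonal proportional to $\gamma$ — together with the observation that $\hat w$ satisfies the same $\eta$-flow with $\gamma\mapsto\hat\gamma$, so that $\hat\Phi$ has the analogous form. Once those are in place, everything reduces to pushing $\Lambda^\top$ past diagonal matrices and keeping the index shifts $u_n=H_{n-1}$, $\bar u_n=u_{n+1}$ straight; I foresee no real obstacle.
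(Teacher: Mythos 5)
Your argument is essentially the paper's own: the proposition is obtained by reading the compatibility relation $\vartheta_\eta\Omega=\Phi\Omega-\Omega\hat\Phi$ (displayed immediately before the statement) subdiagonal by subdiagonal, using the bidiagonal form of $\Omega$ quoted from \cite{Manas} and the single-subdiagonal form of $\Phi$ coming from the first Toda flow, exactly as you do. Two cosmetic points only: the relation $\vartheta_\eta P=\Phi P$ is \eqref{eq:MP}, not Proposition \ref{pro:Gram_eta}, and since \eqref{eq:MS} gives $\Phi=-\Lambda^\top\gamma$ the sign bookkeeping on the first subdiagonal deserves to be made explicit rather than hidden in ``up to the sign convention'' --- but this is the same bookkeeping the paper itself leaves implicit.
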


\subsection{The Toda flows}

Given a semi-infinite vector  $\boldsymbol{\eta }:=\{\eta_l\}_{l=1}^\infty\in \C^\infty$, for each $z\in\C$ we define $\mathscr E(z;\boldsymbol{\eta}):=
\prod_{l=1}^\infty\eta_l^{z^l}$ and assume we assume a weight of the form $w(z)=v(z)\mathscr E(z;\boldsymbol{\eta})$,  with $v$ $\eta$-independent,  so that the corresponding moment matrix is
\begin{align*}
G=\sum_{k=0}^\infty \chi(k)\chi(k)^\top v(k)\mathscr E(k;\boldsymbol{\eta}).
\end{align*}
Observe  that only this first flow  preserves the Pearson reduction. Notice also that $\mathscr E=\Exp{\sum_{l=1}^\infty t_l z^l}$, with $t_l:=\log\eta_l$ the standard time flows in an integrable hierarchy. We will write $\eta_1=\eta$, normally $t_1$ is denoted by $x$, $t_1=x$. 
We will use 
\begin{align*}
\vartheta_l:=\eta_l\frac{\partial}{\partial \eta_l}=\frac{\partial}{\partial t_l}.
\end{align*}
In particular $\vartheta_1=\vartheta_\eta$.  To ensure the converge of the corresponding series for the moments we require $|\eta_k|\leq1$ for $k\in\{2,3,\dots\}$.

If we take $\eta_k=1$, $k\in\N$, then the corresponding Gram matrix is
\begin{align*}
G_0=\sum_{k=0}^\infty \chi(k)\chi(k)^\top v(k).
\end{align*}
Moreover, we can write for the deformed Gram matrix
\begin{align*}
G=\mathscr E(\Lambda;\boldsymbol \eta) G_0&=G_0\mathscr E(\Lambda^\top;\boldsymbol \eta) , &
\mathscr E(\Lambda;\boldsymbol \eta) &:=\prod_{l=1}^\infty\eta_l^{\Lambda^l}=\exp\big(\sum_{l=1}^\infty t_l \Lambda^l\big).
\end{align*}

	\begin{pro}\label{pro:Gram_eta}
	For a functional of the form $\rho=\sum\limits_{k=0}^\infty\delta(z-k)v(z)\mathscr E(z;\boldsymbol{\eta})$, given $l\in\N$,  the Gram matrix satisfies
	\begin{align}\label{eq:symmetrty_eta}
	\vartheta_l G=(\Lambda)^lG=G(\Lambda^\top)^l.
	\end{align}
\end{pro}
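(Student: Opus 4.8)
The plan is to differentiate the series representation of the Gram matrix term by term. The starting point is the elementary observation that the exponential factor $\mathscr E(z;\boldsymbol{\eta})=\prod_{m=1}^\infty\eta_m^{z^m}$ is an eigenfunction of each operator $\vartheta_l=\eta_l\frac{\partial}{\partial\eta_l}$ with eigenvalue $z^l$: since only the $m=l$ factor depends on $\eta_l$, one has $\eta_l\frac{\partial}{\partial\eta_l}\eta_l^{z^l}=z^l\eta_l^{z^l}$, hence $\vartheta_l\mathscr E(z;\boldsymbol{\eta})=z^l\mathscr E(z;\boldsymbol{\eta})$. The factor $v(z)$ is $\boldsymbol{\eta}$-independent, so it is inert under $\vartheta_l$.

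First I would apply $\vartheta_l$ to $G=\sum_{k=0}^\infty\chi(k)\chi(k)^\top v(k)\mathscr E(k;\boldsymbol{\eta})$, differentiating under the summation sign, to obtain
\begin{align*}
\vartheta_l G=\sum_{k=0}^\infty\chi(k)\chi(k)^\top v(k)\,k^l\,\mathscr E(k;\boldsymbol{\eta}).
\end{align*}
Next I would insert the powers of $k$ on either side using the eigenvector relation $\Lambda\chi(z)=z\chi(z)$, which iterates to $\Lambda^l\chi(k)=k^l\chi(k)$, and correspondingly $\chi(k)^\top(\Lambda^\top)^l=k^l\chi(k)^\top$. Replacing $k^l\chi(k)$ by $\Lambda^l\chi(k)$ and pulling $\Lambda^l$ out of the sum gives $\vartheta_l G=\Lambda^l G$; replacing instead $k^l\chi(k)^\top$ by $\chi(k)^\top(\Lambda^\top)^l$ and pulling $(\Lambda^\top)^l$ out on the right gives $\vartheta_l G=G(\Lambda^\top)^l$. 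This is exactly \eqref{eq:symmetrty_eta}. (Equivalently, the identity $\Lambda^l G=G(\Lambda^\top)^l$ is just the $l$-th power of the Hankel relation $\Lambda G=G\Lambda^\top$ already recorded in the text, so only one of the two equalities needs a direct argument.)

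The one point requiring care — and the main obstacle such as it is — is the legitimacy of differentiating the series for $G$ entrywise with respect to $\eta_l$. Each entry $G_{n,m}=\sum_{k=0}^\infty k^{n+m}v(k)\mathscr E(k;\boldsymbol{\eta})$ is a power series in the variables $\eta_2,\eta_3,\dots$ (and an analytic function of $\eta_1=\eta$), and the running hypotheses $|\eta_k|\le 1$ for $k\ge 2$ together with the finiteness of the moments of $w$ ensure absolute convergence on the relevant domain; term-by-term differentiation with respect to any single $\eta_l$ is then justified by the standard theorem on differentiating uniformly convergent series of analytic functions, the extra polynomial factor $k^l$ not affecting convergence since it is dominated by the decay coming from $|\eta_m|<1$ for some $m\ge 2$ (or, when all $|\eta_m|=1$, by the original moment-finiteness conditions listed after \eqref{eq:Pearson_weight}). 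Once this is in place the computation above is purely formal and immediate.
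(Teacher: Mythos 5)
Your argument is correct and coincides with the paper's own proof: both use $\vartheta_l\mathscr E(z;\boldsymbol{\eta})=z^l\mathscr E(z;\boldsymbol{\eta})$ together with $\Lambda\chi(z)=z\chi(z)$ to differentiate the series for $G$ term by term and pull out $\Lambda^l$ (or $(\Lambda^\top)^l$). Your extra remarks on the legitimacy of termwise differentiation are a welcome refinement the paper leaves implicit, but the substance of the proof is the same.
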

\begin{proof}
	It  follows form $\vartheta_{l}\mathscr E(z;\boldsymbol{\eta})=z^l\mathscr E(z;\boldsymbol{\eta})$, indeed
	\begin{align*}
	\vartheta_{l }G=\sum_{k=0}^\infty \chi(k)\chi(k)^\top k^lv(k)\mathscr E(k;\boldsymbol{\eta})=\Lambda^lG=G(\Lambda^\top)^l.
	\end{align*}
\end{proof}

For $k\in\N$, let us define the strictly lower triangular matrix (strictly because it has zeros on the main diagonal)
\begin{align*}
\Phi_k &:=(\vartheta_{k} S ) S^{-1}.
\end{align*}
In particular, we denote $\Phi:=\Phi_1=(\vartheta_{\eta} S ) S^{-1}$.

\begin{pro}
	The semi-infinite vector  $P$ fulfills
	\begin{align}\label{eq:MP}
	\vartheta_{k} P=\Phi_k  P.
	\end{align}
\end{pro}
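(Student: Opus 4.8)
The plan is to differentiate the defining relation $P(z)=S\chi(z)$ from \eqref{eq:PS} with respect to the flow parameter $t_k=\log\eta_k$ (equivalently, to apply $\vartheta_k=\eta_k\partial_{\eta_k}$), exploiting the fact that the semi-infinite monomial vector $\chi(z)$ carries no dependence on the deformation parameters $\boldsymbol{\eta}$: all the $\boldsymbol{\eta}$-dependence of $P$ sits in the Cholesky factor $S$.

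First I would observe that, since $\chi(z)$ is independent of $\boldsymbol{\eta}$, applying $\vartheta_k$ to $P=S\chi$ gives $\vartheta_k P=(\vartheta_k S)\chi$. Then, inserting the identity $I=S^{-1}S$ between $\vartheta_k S$ and $\chi$ and recalling $P=S\chi$ once more, one obtains $\vartheta_k P=(\vartheta_k S)S^{-1}S\chi=(\vartheta_k S)S^{-1}P=\Phi_k P$, which is precisely \eqref{eq:MP}. This all takes place at fixed $z$, so no eigenvalue equation is needed, only the chain rule.

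As a consistency check I would also note that $\Phi_k$ is indeed strictly lower triangular, as claimed in its definition: $S$ is lower unitriangular, so $\vartheta_k S$ is lower triangular with vanishing main diagonal, and right multiplication by the lower unitriangular matrix $S^{-1}$ preserves both the lower-triangular shape and the vanishing of the diagonal. The same fact can be recovered by differentiating the Cholesky factorization $G=S^{-1}HS^{-\top}$ and using Proposition \ref{pro:Gram_eta} together with $\vartheta_k G=\Lambda^kG=G(\Lambda^\top)^k$, then splitting the resulting identity into its strictly lower, diagonal and strictly upper parts; this also yields $\vartheta_k H=(\text{diagonal part of }\Lambda^kG\,S^\top\!H^{-1})$-type formulas, which are not needed here.

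Since the computation is immediate, there is no genuine obstacle; the only point requiring a word of care is convergence of the series defining $S$ and $\vartheta_k S$, which is guaranteed by the standing assumption $|\eta_k|\le 1$ for $k\in\{2,3,\dots\}$ made when the Toda flows were introduced, so that $\mathscr E(\Lambda;\boldsymbol{\eta})$ and hence $G$, $S$ and their $\vartheta_k$-derivatives are well defined.
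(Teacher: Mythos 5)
Your proposal is correct and follows exactly the paper's own argument: differentiate $P=S\chi$ using the $\boldsymbol{\eta}$-independence of $\chi$ and insert $S^{-1}S$ to recognize $(\vartheta_k S)S^{-1}P=\Phi_k P$. The additional remarks on strict lower triangularity and convergence are fine but not needed for the statement itself.
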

\begin{proof}
	As $P=S\chi$ then $\vartheta_{k} P=(\vartheta_{k} S)\chi=(\vartheta_{k} S) S^{-1} S\chi=\Phi P$.
\end{proof}

\begin{pro}[Sato--Wilson equations]
	The following equations holds
	\begin{align}\label{eq:Mscr}
	-\Phi_k  H+\vartheta_{k} H-H \Phi_k ^\top&=J^kH,& k&\in\N.
	\end{align}
	Consequently, for $k\in\N$ and $n\in\N_0$ we have 
	\begin{align*}
	\Phi_k&=-(J^k)_-, &  \vartheta_k \log H_n=(J^k)_{n,n}.
		\end{align*}
\end{pro}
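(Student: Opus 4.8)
The plan is to differentiate the Cholesky factorization \eqref{eq:Cholesky} along the deformation parameter $\eta_k$ and to compare the result with the symmetry of the moment matrix recorded in Proposition \ref{pro:Gram_eta}. Concretely, applying $\vartheta_k$ to $G=S^{-1}HS^{-\top}$ and using $\vartheta_k S^{-1}=-S^{-1}(\vartheta_k S)S^{-1}=-S^{-1}\Phi_k$ (from $\Phi_k=(\vartheta_k S)S^{-1}$) together with its transpose $\vartheta_k S^{-\top}=-\Phi_k^\top S^{-\top}$, one collects the three terms into
\[
\vartheta_k G=S^{-1}\bigl(-\Phi_k H+\vartheta_k H-H\Phi_k^\top\bigr)S^{-\top}.
\]
On the other hand \eqref{eq:symmetrty_eta} gives $\vartheta_k G=\Lambda^k G=\Lambda^k S^{-1}HS^{-\top}=S^{-1}J^kHS^{-\top}$, since $\Lambda^k S^{-1}=S^{-1}(S\Lambda^k S^{-1})=S^{-1}J^k$ by \eqref{eq:Jacobi}. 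Equating the two expressions and conjugating, i.e. multiplying on the left by $S$ and on the right by $S^\top$, produces exactly the Sato--Wilson equation \eqref{eq:Mscr}.

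For the consequences I would read off the triangular decomposition of \eqref{eq:Mscr}. Since $S$ is lower unitriangular, $\vartheta_k S$ is strictly lower triangular, hence so is $\Phi_k$; therefore $\Phi_k H$ is strictly lower triangular, $\vartheta_k H$ is diagonal, and $H\Phi_k^\top$ is strictly upper triangular. Projecting \eqref{eq:Mscr} onto the strictly lower, diagonal and strictly upper parts, and using that right multiplication by the diagonal $H$ preserves each of them (so that $(J^kH)_-=(J^k)_-H$ and $(J^kH)_0=(J^k)_0H$), the strictly lower part gives $-\Phi_k H=(J^k)_-H$, that is $\Phi_k=-(J^k)_-$, while the diagonal part gives $\vartheta_k H=(J^k)_0 H$, that is $\vartheta_k\log H_n=(J^k)_{n,n}$. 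The strictly upper part is then automatically satisfied, being the transpose of the strictly lower one after using $J^kH=H(J^k)^\top$, which follows by iterating the symmetry \eqref{eq:symmetry_J}.

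The argument is essentially bookkeeping and I expect no serious obstacle; the only points needing care are the transpose identities for $\vartheta_k S^{-\top}$ and the verification that $\Phi_k$ is genuinely \emph{strictly} lower triangular, which is what makes the three-term splitting of \eqref{eq:Mscr} clean and lets it deliver the two dressing identities directly. One also implicitly uses that the $\eta_k$-flows are well defined, i.e. that the series for $\mathscr E(\Lambda;\boldsymbol\eta)$ and for the moments converge, as assumed just before the statement.
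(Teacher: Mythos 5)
Your proposal is correct and follows essentially the same route as the paper: differentiate the Cholesky factorization, invoke $\vartheta_k G=\Lambda^k G$ from Proposition \ref{pro:Gram_eta}, and conjugate by $S$ and $S^\top$ to reach \eqref{eq:Mscr}. Your explicit triangular splitting (strictly lower, diagonal, strictly upper, with the upper part redundant by the symmetry $J^kH=H(J^k)^\top$) is exactly the step the paper leaves implicit in ``and the result follows.''
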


\begin{proof}
	Using  the Cholesky factorization \eqref{eq:Cholesky} for the moment  matrix we get
	\begin{align*}
	\vartheta_{k} G=\vartheta_{k}\big(S^{-1} H S^{-\top}\big)=-S^{-1}(\vartheta_{k} S) S^{-1} H S^{-\top}+ S^{-1} (\vartheta_{k} H) S^{-\top}- S^{-1} HS^{-\top}(\vartheta_{k} S)^\top S^{-\top}
	\end{align*}
	
	Hence, from the symmetry relation \eqref{eq:symmetrty_eta} we deduce
	\begin{align*}
	-S^{-1}(\vartheta_\eta S) S^{-1} H S^{-\top}+ S^{-1} (\vartheta_\eta H) S^{-\top}- S^{-1} HS^{-\top}(\vartheta_\eta S)^\top S^{-\top}=\Lambda^k S^{-1} H S^{-\top},
	\end{align*}
	from where
	\begin{align*}
	-(\vartheta_k S) S^{-1} H+ \vartheta_k H- H\big((\vartheta_k S) S^{-1}\big)^\top =J^kH ,
	\end{align*}
for $J=S\Lambda S^{-1}$,	and the result follows.
\end{proof}

For $k=1$; i.e. for the first flow $\eta_1=\eta$, we have formulas \eqref{eq:hankel_hyper1}, \eqref{eq:hankel_hyper2} and \eqref{eq:Wp_n}
in terms of the $\tau$-function. Moreover,

\begin{pro}[Toda]
The	 following  equations hold 
\begin{subequations}
		\begin{align}\label{eq:MS}
	\Phi =(\vartheta_{\eta} S)S^{-1}&=-\Lambda^\top\gamma,\\\label{eq:MS0}
	(\vartheta_\eta H) H^{-1}&=\beta.
	\end{align}
\end{subequations}
	In particular,  for $n,k-1\in\N$, we have
\begin{subequations}
		\begin{gather}\label{eq:equations}
\begin{aligned}
		\vartheta_\eta p^1_n&=-\gamma_n, & \vartheta_{\eta}p^k_{n+k}&=-\gamma_{n+k}p^{k-1}_{n+k-1}, 
\end{aligned}\\
	\vartheta_\eta \log H_n=\beta_ n.\label{eq:equationsH}
	\end{gather}
\end{subequations}
	The functions $q_n:=\log H_n$, $n\in\N$, satisfy the Toda equations
	\begin{align}\label{eq:Toda_equation}
	\vartheta_\eta^2q_n=\Exp{q_{n+1}-q_n}-\Exp{q_n-q_{n-1}}.
	\end{align}
	For $n\in\N$, we also have
\begin{align*}
\vartheta_\eta P_{n}(z)=-\gamma_n P_{n-1}(z).
\end{align*}	
\end{pro}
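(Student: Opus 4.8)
The plan is to specialise the Sato--Wilson equations \eqref{eq:Mscr}, together with their already-derived consequences $\Phi_k=-(J^k)_-$ and $\vartheta_k\log H_n=(J^k)_{n,n}$, to the first flow $k=1$, and then to read off all the component-wise identities by expanding $S$, $H$ and $P$ into subdiagonals.

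First, setting $k=1$ gives $\Phi=\Phi_1=-(J)_-$. Since $J=\Lambda^\top\gamma+\beta+\Lambda=J_-+J_+$ with strictly-lower-triangular part $J_-=\Lambda^\top\gamma$, this is exactly \eqref{eq:MS}. Likewise $\vartheta_\eta\log H_n=(J)_{n,n}=\beta_n$; since $(\vartheta_\eta H)H^{-1}=\diag(\vartheta_\eta\log H_0,\vartheta_\eta\log H_1,\dots)$, this yields \eqref{eq:MS0} and \eqref{eq:equationsH} at once.

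Next, for the polynomial coefficients I would use $\vartheta_\eta S=\Phi S=-\Lambda^\top\gamma S$ and the subdiagonal expansion $S=I+\Lambda^\top S^{[1]}+(\Lambda^\top)^2S^{[2]}+\cdots$. Collecting the $k$-th subdiagonal on both sides, and using the ladder relations \eqref{eq:ladder_lambda} to commute $\gamma$ past the powers of $\Lambda^\top$, i.e. $\Lambda^\top\gamma(\Lambda^\top)^{k-1}=(\Lambda^\top)^k(T_-^{k-1}\gamma)$, gives $\vartheta_\eta S^{[k]}=-(T_-^{k-1}\gamma)S^{[k-1]}$ with the convention $S^{[0]}=I$. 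Reading this off component-wise via the identification $S^{[k]}_n=p^k_{n+k}$ from \eqref{eq:Sp} and $(T_-^{k-1}\gamma)_n=\gamma_{n+k}$ produces $\vartheta_\eta p^k_{n+k}=-\gamma_{n+k}p^{k-1}_{n+k-1}$, whose $k=1$ instance is $\vartheta_\eta p^1_n=-\gamma_n$.

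Finally, for the Toda equation, from \eqref{eq:equationsH} and \eqref{eq:equations0} we have $\vartheta_\eta q_n=\beta_n=p^1_n-p^1_{n+1}$, so differentiating once more and inserting $\vartheta_\eta p^1_n=-\gamma_n$ together with $\gamma_n=H_n/H_{n-1}=\Exp{q_n-q_{n-1}}$ gives $\vartheta_\eta^2 q_n=\gamma_{n+1}-\gamma_n=\Exp{q_{n+1}-q_n}-\Exp{q_n-q_{n-1}}$; equivalently one may first derive the Lax equation $\vartheta_\eta J=[\Phi,J]$ from the compatibility of $\vartheta_\eta P=\Phi P$ with $JP=zP$ and then read off its diagonal and first-subdiagonal parts. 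The last formula follows by taking the $n$-th component of $\vartheta_\eta P=\Phi P=-\Lambda^\top\gamma P$, noting that $(\Lambda^\top\gamma P)_n=\gamma_nP_{n-1}$. I do not anticipate any genuine obstacle: the whole statement is a specialisation of the Sato--Wilson relations, and the only delicate point is keeping track of the shift operators $T_\pm$ and the index shift in $S^{[k]}_n=p^k_{n+k}$ when passing between matrix and component form.
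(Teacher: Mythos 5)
Your proposal is correct and follows essentially the same route as the paper: specialising the Sato--Wilson relations $\Phi_k=-(J^k)_-$, $\vartheta_k\log H_n=(J^k)_{n,n}$ to $k=1$, expanding $\vartheta_\eta S=-\Lambda^\top\gamma S$ into subdiagonals via the ladder relations to get $\vartheta_\eta S^{[k]}=-(T_-^{k-1}\gamma)S^{[k-1]}$ and hence the $p^k$ equations, and deriving the Toda equation from $\vartheta_\eta\log H_n=\beta_n=p^1_n-p^1_{n+1}$ together with $\gamma_n=H_n/H_{n-1}$. The index bookkeeping $(T_-^{k-1}\gamma)_n=\gamma_{n+k}$, $S^{[k]}_n=p^k_{n+k}$ and the final component computation $(\Lambda^\top\gamma P)_n=\gamma_nP_{n-1}$ are all handled correctly.
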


\begin{proof}
Equation	\eqref{eq:MS} is a direct consequence of \eqref{eq:Mscr} for $k=1$. To obtain \eqref{eq:equations} we
we write $\vartheta_{\eta}S=-\Lambda^\top\gamma S$ in terms of its subdiagonals, i.e.
\begin{align*}
	\vartheta_{\eta}\big(I+\Lambda^\top S^{[1]}+(\Lambda^\top )^2S^{[2]}+\cdots\big)=-\Lambda^\top\gamma\big(I+\Lambda^\top S^{[1]}+(\Lambda^\top )^2S^{[2]}+\cdots\big)
\end{align*}
that, for $k\in\N$,  gives 
\begin{align*}
\vartheta_\eta	S^{[k]}=-(T_-^{k-1}\gamma)	S^{[k-1]}
\end{align*}
with $S^{[0]}=I$. Now, recalling \eqref{eq:Sp} we get \eqref{eq:equations}. Equation \eqref{eq:equationsH}  follows  component wise from \eqref{eq:MS0}.

As $\beta_n=p^1_n-p^1_{n+1}$ and $\gamma_n=\frac{H_{n}}{H_{n-1}}$ we deduce that
	\begin{align*}
	\vartheta_\eta^2 \log H_n=\vartheta_\eta p^1_n-\vartheta_\eta p^1_{n+1}=-\frac{H_{n}}{H_{n-1}}+\frac{H_{n+1}}{H_{n}},
	\end{align*}
that is the Toda equation \eqref{eq:Toda_equation} for $H_n=\Exp{q_n}$.
	The last equation follows form $\vartheta_\eta  P=\Phi P=-\Lambda^\top \gamma P$.
\end{proof}

Given the $\tau$-function \eqref{eq:hankel_hyper1} we find
\begin{pro}[$\tau$-function expressions]
	In terms of the $\tau$-function 
	\begin{align*}
		\tau_n&:=\mathscr W_{n}\Big({\displaystyle \,{}_{M}F_{N}\left[{\begin{matrix}a_{1}&\cdots &a_{M}\\b_{1}&\cdots &b_{N}\end{matrix}};\eta\right]}\Big),
	\end{align*}
	we find
	\begin{align}\label{eq:toda_relations}
	H_n&=\frac{\tau_{n+1}}{\tau_n},	&
p^1_n&=-\vartheta_\eta\log \tau_n, &\gamma_n&=\vartheta_\eta^2\log\tau_n, &\beta_n&=\vartheta_{\eta}\log\frac{\tau_{n+1}}{\tau_n}.
\end{align}
\end{pro}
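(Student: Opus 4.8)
The plan is to assemble the four identities from results already proved, since each is a one-line consequence of the dictionary between Hankel determinants and Wronskians of generalized hypergeometric functions together with the Toda relations, so no genuinely new computation is required.

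First, the identities $H_n=\tau_{n+1}/\tau_n$ and $p^1_n=-\vartheta_\eta\log\tau_n$ are precisely \eqref{eq:Wp_n}, which was obtained from the identification $\varDelta_k=\tau_k$ in \eqref{eq:hankel_hyper1} and $\tilde\varDelta_k=\vartheta_\eta\tau_k$ in \eqref{eq:hankel_hyper2}, combined with the standard expressions $H_k=\varDelta_{k+1}/\varDelta_k$ and $p^1_k=-\tilde\varDelta_k/\varDelta_k$ read off from \eqref{eq:equations0}. I would simply recall these.

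Next, for $\gamma_n=\vartheta_\eta^2\log\tau_n$ I would differentiate the second identity: from $p^1_n=-\vartheta_\eta\log\tau_n$ we get $\vartheta_\eta p^1_n=-\vartheta_\eta^2\log\tau_n$, and inserting the Toda relation $\vartheta_\eta p^1_n=-\gamma_n$ from \eqref{eq:equations} gives the claim. As a consistency remark one can compare with $\gamma_n=H_n/H_{n-1}=\tau_{n+1}\tau_{n-1}/\tau_n^2$, which expresses $\log\gamma_n$ as a second lattice difference of $\log\tau_n$ rather than a second $\vartheta_\eta$-derivative; both forms are correct and standard.

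Finally, for $\beta_n=\vartheta_\eta\log(\tau_{n+1}/\tau_n)$ I would use $\beta_n=\vartheta_\eta\log H_n$ from \eqref{eq:equationsH} and substitute $H_n=\tau_{n+1}/\tau_n$; equivalently, starting from $\beta_n=p^1_n-p^1_{n+1}$ and using $p^1_n=-\vartheta_\eta\log\tau_n$ yields $\beta_n=\vartheta_\eta\log\tau_{n+1}-\vartheta_\eta\log\tau_n$, the same expression. There is no real obstacle here; the only point requiring a little care is bookkeeping of which $\vartheta_\eta$ acts on which factor, and checking that the two routes agree, which they do since $\vartheta_\eta\beta_n=\vartheta_\eta^2\log H_n$ on one hand and $\vartheta_\eta p^1_n-\vartheta_\eta p^1_{n+1}=\gamma_{n+1}-\gamma_n$ on the other, recovering the Toda equation \eqref{eq:Toda_equation}.
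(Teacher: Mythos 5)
Your proposal is correct and follows essentially the same route as the paper, whose proof is simply to collect together \eqref{eq:Wp_n} with the first Toda flow relations \eqref{eq:equations} and \eqref{eq:equationsH}; you merely spell out the same substitutions ($\vartheta_\eta p^1_n=-\gamma_n$ for $\gamma_n$, and $\vartheta_\eta\log H_n=\beta_n$ or equivalently $\beta_n=p^1_n-p^1_{n+1}$ for $\beta_n$) in more detail. The consistency checks you add are fine but not needed.
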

 \begin{proof}
Collect together \eqref{eq:Wp_n} and \eqref{eq:equations}.
 \end{proof}


\begin{pro}
	The following Lax equation holds
	\begin{align*}
	\vartheta_\eta J=[J_+,J].
	\end{align*}
	The  recursion coefficients satisfy the following Toda system
\begin{subequations}\label{eq:Toda_system}
		\begin{align}\label{eq:Toda_system_beta}
	\vartheta_\eta\beta_n&=\gamma_{n+1}-\gamma_n,\\\label{eq:Toda_system_gamma}
	\vartheta_\eta\log\gamma_n&=\beta_{n}-\beta_{n-1},
	\end{align}
\end{subequations}
	for $n\in\N_0$ and $\beta_{-1}=0$. Consequently, we get 
	\begin{align}\label{eq:Toda_equation_gamma}
	\vartheta_\eta^2\log\gamma_n+2\gamma_n&=\gamma_{n+1}+\gamma_{n-1}.
	\end{align}
	Moreover,
	\begin{align*}
	\vartheta_\eta^2\log\tau_n=\frac{\tau_{n+1}\tau_{n-1}}{\tau_n^2}.
	\end{align*}
\end{pro}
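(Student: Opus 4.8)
The plan is to get the Lax equation from the Sato--Wilson relation for the first flow, and then read off the scalar equations by comparing diagonals. First I would differentiate $J=S\Lambda S^{-1}$; since $\Lambda$ does not depend on $\eta$,
\[
\vartheta_\eta J=(\vartheta_\eta S)S^{-1}\,S\Lambda S^{-1}-S\Lambda S^{-1}(\vartheta_\eta S)S^{-1}=[\Phi,J],\qquad \Phi=(\vartheta_\eta S)S^{-1}.
\]
By \eqref{eq:MS} (equivalently \eqref{eq:Mscr} with $k=1$) we have $\Phi=-\Lambda^\top\gamma=-J_-$. Writing $J=J_-+J_+$ and using $[J_-,J_-]=0$ gives $[\Phi,J]=[-J_-,J_-+J_+]=[J_+,J_-]=[J_+,J_-+J_+]=[J_+,J]$, which is the asserted Lax equation.

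Next I would extract the Toda system by evaluating $[J_+,J]$ with the ladder identities \eqref{eq:ladder_lambda} together with $\Lambda\Lambda^\top=I$ and $\Lambda^\top\Lambda=I-E_{0,0}$. All the purely tridiagonal and all the spurious second–superdiagonal terms cancel (they must, since $\vartheta_\eta J$ is tridiagonal with vanishing superdiagonal), and one is left with $[J_+,J]=[\beta,\Lambda^\top\gamma]+[\Lambda,\Lambda^\top\gamma]=\Lambda^\top(T_-\beta-\beta)\gamma+(\gamma-T_+\gamma)$, while $\vartheta_\eta J=\Lambda^\top(\vartheta_\eta\gamma)+\vartheta_\eta\beta$. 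Matching the main diagonal yields $\vartheta_\eta\beta=\gamma-T_+\gamma$, i.e. $\vartheta_\eta\beta_n=\gamma_{n+1}-\gamma_n$, which is \eqref{eq:Toda_system_beta}; matching the first subdiagonal yields $\vartheta_\eta\gamma=(T_-\beta-\beta)\gamma$, i.e. $\vartheta_\eta\log\gamma_n=\beta_n-\beta_{n-1}$, which is \eqref{eq:Toda_system_gamma}. Alternatively, and perhaps more transparently, these same two identities drop out directly from the already proven relations: $\vartheta_\eta\beta_n=\vartheta_\eta p^1_n-\vartheta_\eta p^1_{n+1}=-\gamma_n+\gamma_{n+1}$ using $\beta_n=p^1_n-p^1_{n+1}$ and \eqref{eq:equations}, and $\vartheta_\eta\log\gamma_n=\vartheta_\eta\log H_n-\vartheta_\eta\log H_{n-1}=\beta_n-\beta_{n-1}$ using $\gamma_n=H_n/H_{n-1}$ and \eqref{eq:equationsH}.

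For the second–order form \eqref{eq:Toda_equation_gamma} I would apply $\vartheta_\eta$ to \eqref{eq:Toda_system_gamma} and substitute \eqref{eq:Toda_system_beta}: $\vartheta_\eta^2\log\gamma_n=\vartheta_\eta\beta_n-\vartheta_\eta\beta_{n-1}=(\gamma_{n+1}-\gamma_n)-(\gamma_n-\gamma_{n-1})$, which rearranges to $\vartheta_\eta^2\log\gamma_n+2\gamma_n=\gamma_{n+1}+\gamma_{n-1}$. Finally, for the Hirota-type identity I would use \eqref{eq:toda_relations}: on one hand $\gamma_n=\vartheta_\eta^2\log\tau_n$, and on the other $\gamma_n=\dfrac{H_n}{H_{n-1}}=\dfrac{\tau_{n+1}/\tau_n}{\tau_n/\tau_{n-1}}=\dfrac{\tau_{n+1}\tau_{n-1}}{\tau_n^2}$; equating the two expressions gives $\vartheta_\eta^2\log\tau_n=\dfrac{\tau_{n+1}\tau_{n-1}}{\tau_n^2}$.

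The only step that calls for any care is the bookkeeping with the shift operators $T_\pm$ in computing $[J_+,J]$ and verifying that its second superdiagonal vanishes; this is forced by the fact that $\vartheta_\eta J$ is tridiagonal, so the computation is essentially self-checking. In view of the alternative derivation from \eqref{eq:equations}–\eqref{eq:equationsH}, there is in fact no substantial obstacle, and the remaining steps are routine algebra.
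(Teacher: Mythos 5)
Your proposal is correct and follows essentially the same route as the paper: the Lax equation from $\vartheta_\eta J=[\Phi,J]$ with $\Phi=-J_-$ from \eqref{eq:MS}, the Toda system via the alternative derivation from \eqref{eq:equations0}, \eqref{eq:equations} and \eqref{eq:equationsH}, and the $\tau$-function identity from $\gamma_n=\vartheta_\eta^2\log\tau_n=H_n/H_{n-1}$. The only difference is that you also carry out explicitly the component-wise extraction of \eqref{eq:Toda_system} from $[J_+,J]$ (which the paper merely states can be done), and your bookkeeping there is accurate.
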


\begin{proof}
	As $J=S\Lambda S^{-1}$ is clear that, 
	\begin{align*}
	\vartheta_\eta J=[\Phi,J]=[-J_-,J]=[J_+-J,J]=[J_+,J]
	\end{align*}
	From this Lax equation one could derive, component-wise, the Toda system \eqref{eq:Toda_system}. Alternatively, the Toda system \eqref{eq:Toda_system} could be derived as follows. From \eqref{eq:equations0}  and \eqref{eq:equations} we obtain 
	\begin{align*}
	\vartheta_\eta\beta_n&=\vartheta_\eta	p^1_n-\vartheta_\eta p^1_{n+1}=-\gamma_n+\gamma_{n+1},\\
	\vartheta_\eta\log \gamma_n&=\vartheta_\eta\log H_n-\vartheta_\eta\log H_{n-1}=\beta_n-\beta_{n-1}.
	\end{align*}
Finally,
	\begin{align*}
	\vartheta_\eta^2\log\tau_n=\gamma_n=\frac{H_{n}}{H_{n-1}}=\frac{\tau_{n+1}\tau_{n-1}}{\tau_n}.
	\end{align*}
\end{proof}

For higher Toda flows, that generically are not consistent with the hypergeometric reduction,  we have
\begin{pro}[Lax equations]
	The following Lax equation holds
	\begin{align*}
	\vartheta_k J=[(J^k)_+,J].
	\end{align*}
\end{pro}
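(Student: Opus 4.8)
The plan is to reproduce, for an arbitrary flow $\vartheta_k$, the derivation already carried out for the first flow in the Lax equation $\vartheta_\eta J=[J_+,J]$. The starting point is the dressing relation $J=S\Lambda S^{-1}$ from \eqref{eq:Jacobi} together with the fact that $\Lambda$ does not depend on $\boldsymbol\eta$. Differentiating with respect to $\vartheta_k$ and factoring out $S$ and $S^{-1}$,
\begin{align*}
\vartheta_k J=(\vartheta_k S)\Lambda S^{-1}-S\Lambda S^{-1}(\vartheta_k S)S^{-1}=(\vartheta_k S)S^{-1}J-J(\vartheta_k S)S^{-1}=[\Phi_k,J],
\end{align*}
where $\Phi_k:=(\vartheta_k S)S^{-1}$ is the strictly lower triangular matrix introduced above.

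The next step is to replace $\Phi_k$ by its closed form. The Sato--Wilson equations \eqref{eq:Mscr}, obtained from the Cholesky factorization and the symmetry $\vartheta_kG=\Lambda^k G=G(\Lambda^\top)^k$, were shown to give $\Phi_k=-(J^k)_-$. Substituting this into the commutator and using the trivial splitting $J^k=(J^k)_++(J^k)_-$ together with $[J^k,J]=0$,
\begin{align*}
\vartheta_k J=[\Phi_k,J]=\bigl[-(J^k)_-,J\bigr]=\bigl[(J^k)_+-J^k,J\bigr]=\bigl[(J^k)_+,J\bigr],
\end{align*}
which is exactly the asserted Lax equation; the case $k=1$ recovers $\vartheta_\eta J=[J_+,J]$ since $(J)_+=J_+$.

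There is essentially no analytic obstacle here: once the identity $\Phi_k=-(J^k)_-$ is in hand, the statement is a one-line manipulation of the commutator. The only point deserving a word of care is the legitimacy of the $\vartheta_k$-flows for $k\geq2$, i.e.\ that the moment series $G=\sum_k\chi(k)\chi(k)^\top v(k)\mathscr E(k;\boldsymbol\eta)$ and its term-by-term $\vartheta_k$-derivatives converge, and that the induced differentiation passes through the Cholesky factorization to $S$ and hence to $J$. This is guaranteed by the standing assumption $|\eta_k|\leq1$ for $k\in\{2,3,\dots\}$ made when the deformed weight $w(z)=v(z)\mathscr E(z;\boldsymbol\eta)$ was introduced, so I would simply remark that the computation above is valid under that hypothesis.
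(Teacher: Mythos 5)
Your proposal is correct and follows essentially the same route as the paper: differentiate the dressing relation $J=S\Lambda S^{-1}$ to get $\vartheta_k J=[\Phi_k,J]$, then substitute $\Phi_k=-(J^k)_-$ from the Sato--Wilson equations and use $[J^k,J]=0$ to replace $-(J^k)_-$ by $(J^k)_+$ in the commutator. The extra remarks on convergence and on recovering the $k=1$ case are fine but not needed beyond what the paper already assumes.
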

\begin{proof}
		As $J=S\Lambda S^{-1}$ we have
\begin{align*}
\vartheta_k J=[\Phi_k,J]=[-(J)^k_-,J]=[(J^k)_+-J^k,J]=[(J^k)_+,J]
\end{align*}
\end{proof}


\begin{pro}[Wave functions and Zakharov--Shabat equations]
	The wave function 
	satisfies the linear system
	\begin{align*}
	\vartheta_k \varPsi=(J^k)_+\varPsi.
	\end{align*}
	Moreover, the following Zakharov--Shabat equations hold
	\begin{align*}
	\vartheta_k (J^l)_+-\vartheta_l(J^k)_++[(J^l)_+,(J^k)_+]=0.
	\end{align*}
\end{pro}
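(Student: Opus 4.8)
The plan is to realise $\varPsi$ as the dressing of the bare exponential solution. Put $W:=S\,\mathscr E(\Lambda;\boldsymbol\eta)$, a time-dependent, invertible semi-infinite matrix (a lower unitriangular matrix composed with the exponential of a strictly upper triangular one), and take the wave function to be $\varPsi(z)=W\chi(z)=P(z)\,\mathscr E(z;\boldsymbol\eta)$, using that $\Lambda\chi(z)=z\chi(z)$ makes $\mathscr E(\Lambda;\boldsymbol\eta)\chi(z)=\mathscr E(z;\boldsymbol\eta)\chi(z)$. The key point is that $\chi(z)$ does not depend on the flow variables $\{\eta_l\}$.

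First I would establish the linear system. Since $\vartheta_k\mathscr E(\Lambda;\boldsymbol\eta)=\Lambda^k\mathscr E(\Lambda;\boldsymbol\eta)$ (the two factors commuting), differentiating $\varPsi=W\chi(z)$ gives
\[
\vartheta_k\varPsi=(\vartheta_kW)\chi(z)=(\vartheta_kW)W^{-1}\varPsi,\qquad (\vartheta_kW)W^{-1}=(\vartheta_kS)S^{-1}+S\Lambda^kS^{-1}=\Phi_k+J^k.
\]
Invoking the Sato--Wilson identity $\Phi_k=-(J^k)_-$ proved above, the right-hand side equals $-(J^k)_-+J^k=(J^k)_+$, so $\vartheta_k\varPsi=(J^k)_+\varPsi$ as claimed.

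For the Zakharov--Shabat equations I would use the flatness of the associated connection. Writing $A_k:=(J^k)_+=(\vartheta_kW)W^{-1}$, one has, acting on $\boldsymbol\eta$-dependent semi-infinite column vectors, $\vartheta_k-A_k=W\,\vartheta_k\,W^{-1}$, whence
\[
[\,\vartheta_k-A_k,\;\vartheta_l-A_l\,]=W[\vartheta_k,\vartheta_l]W^{-1}=0.
\]
Expanding the bracket with the Leibniz identity $[\vartheta_k,A_l]=\vartheta_kA_l$ yields $\vartheta_lA_k-\vartheta_kA_l+[A_k,A_l]=0$, which after relabelling $k\leftrightarrow l$ is exactly $\vartheta_k(J^l)_+-\vartheta_l(J^k)_++[(J^l)_+,(J^k)_+]=0$. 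Equivalently, avoiding the operator identity, one argues directly: cross-differentiating $\vartheta_k\varPsi=A_k\varPsi$ gives $\vartheta_l\vartheta_k\varPsi=(\vartheta_lA_k)\varPsi+A_kA_l\varPsi$, and subtracting the same expression with $k\leftrightarrow l$ leaves $\big(\vartheta_lA_k-\vartheta_kA_l+[A_k,A_l]\big)\varPsi=0$.

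The one delicate point — the main obstacle — is then to pass from $X\varPsi(z)=0$ for all $z$ to $X=0$, where $X:=\vartheta_lA_k-\vartheta_kA_l+[A_k,A_l]$. This is legitimate because $X$ is a finite combination of the banded matrices $A_k,A_l$ and their $\eta$-derivatives, hence banded; so $X\varPsi(z)=XW\chi(z)$ is entry-wise well defined, and its vanishing for $z$ near $0$ forces $XW\chi^{(j)}(0)=0$ for every $j$, i.e.\ $XW$ annihilates every standard basis vector, so $XW=0$ and $X=0$ by invertibility of $W$. The conjugation argument circumvents this, at the sole cost of making precise $\vartheta_k-A_k=W\vartheta_kW^{-1}$, which is the computation of $(\vartheta_kW)W^{-1}$ already done in the first step. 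A purely algebraic route — cross-differentiating the Lax equations $\vartheta_kJ=[(J^k)_+,J]$ and applying the Jacobi identity — shows only that $X$ commutes with $J$, and concluding $X=0$ from that then requires combining the band structure of $X$ with the description of the centraliser of the Hessenberg matrix $J$; the wave-function argument is cleaner and is the one I would present.
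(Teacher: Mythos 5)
Your proposal is correct and follows essentially the same route as the paper: the linear system comes from the Sato--Wilson identity $\Phi_k=-(J^k)_-$ applied to $\varPsi=P\,\mathscr E$ (your dressing $W=S\,\mathscr E(\Lambda;\boldsymbol\eta)$ is just a repackaging of the paper's computation $\vartheta_k\varPsi=(\vartheta_k P)\mathscr E+P\,\vartheta_k\mathscr E=(J^k)_+\varPsi$), and the Zakharov--Shabat equations are obtained as the compatibility (zero-curvature) condition of that system, which is exactly what the paper asserts, with your cross-differentiation and the argument that the banded matrix annihilating $\varPsi(z)$ for all $z$ must vanish supplying the detail the paper leaves implicit.
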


\begin{proof}
The orthogonal polynomials we have
	\begin{align*}
\vartheta_{k} P=\Phi_k  P=((J^k)_+-J^k)P=((J^k)_+-z^k)P
\end{align*}
Thus, the wave function 
fulfills the following linear system
\begin{align*}
\vartheta_k \varPsi=\vartheta_k(P)\mathscr E+P\vartheta_k(\mathscr E)=(J^k)_+\varPsi.
\end{align*}
Finally, the Zakharov--Shabat equations follow as the compatibility conditions for the previous linear system.
\end{proof}
	
\begin{pro}
Let us assume that $G_0$, the Gram matrix of the functional	$\rho=\sum\limits_{k=0}^\infty\delta(z-k)v(z)$, admits a Cholesky factorization, and that  two semi-infinite matrices $Z_1$ and 
$Z_2$ are given, such that 
\begin{enumerate}
	\item $Z_1\mathscr E(\Lambda,\boldsymbol \eta)^{-1}$ is strictly lower triangular,
	\item $Z_2$ is upper triangular and
	\item $Z_1  (\boldsymbol{\eta}) G_0=Z_2  (\boldsymbol{\eta}) $.
\end{enumerate}
Then, we can ensure that $Z_1=Z_2=0$.
\end{pro}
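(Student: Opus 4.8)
The plan is the standard Gauss--Borel vanishing argument, the only wrinkle being to dispose of the exponential factor $\mathscr E(\Lambda;\boldsymbol\eta)$ first. I would begin by setting $L:=Z_1\,\mathscr E(\Lambda;\boldsymbol\eta)^{-1}$, which is strictly lower triangular by hypothesis~(i). Since $\mathscr E(\Lambda;\boldsymbol\eta)=\exp\bigl(\sum_l t_l\Lambda^l\bigr)$ is a genuine upper unitriangular semi-infinite matrix (each of its entries being a finite sum), it is invertible, so hypothesis~(iii) becomes $L\,\mathscr E(\Lambda;\boldsymbol\eta)\,G_0=Z_2$. Recognising $\mathscr E(\Lambda;\boldsymbol\eta)\,G_0=G$, the $\boldsymbol\eta$-deformed moment matrix, this reads simply $LG=Z_2$.

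The second step is to feed in the Cholesky factorization $G=S^{-1}HS^{-\top}$ (valid here exactly as in the rest of this section, with $S$ lower unitriangular and $H$ a diagonal matrix whose entries $H_k$ are nonzero). Substituting and moving $S^{-\top}$ to the right-hand side gives $L\,S^{-1}H=Z_2\,S^{\top}$. The left-hand side is strictly lower triangular: $L$ strictly lower triangular times the lower unitriangular $S^{-1}$ is again strictly lower triangular, and right multiplication by the diagonal $H$ preserves this. The right-hand side is upper triangular, being the product of the upper triangular $Z_2$ and the upper unitriangular $S^{\top}$. A semi-infinite matrix that is simultaneously strictly lower triangular and upper triangular must be zero, so $L\,S^{-1}H=0$; then, $S^{-1}$ being invertible and $H$ having nonzero diagonal entries, $L=0$, whence $Z_1=L\,\mathscr E(\Lambda;\boldsymbol\eta)=0$ and $Z_2=LG=0$.

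The computation is entirely formal, and every matrix product above is well defined because, apart from $Z_1$ and $Z_2$ themselves, every factor is triangular, so each entry is a finite sum. There is no hard calculation: the only points needing a word of care are the identity $\mathscr E(\Lambda;\boldsymbol\eta)G_0=G$ and the availability of the Cholesky factorization of the deformed $G$ — the former being the Hankel identity already recorded in this section, and the latter the standing assumption under which $S$ and $H$ are introduced. Identifying those ingredients, rather than any manipulation, is the whole substance of the argument.
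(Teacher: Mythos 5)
Your argument is correct and is essentially the paper's own proof: both rewrite the hypothesis through $\mathscr E(\Lambda;\boldsymbol\eta)G_0=G$, insert the Cholesky factorization $G=S^{-1}HS^{-\top}$, and conclude from the fact that a matrix equal simultaneously to a strictly lower triangular and an upper triangular one must vanish, whence $Z_1=Z_2=0$. The only cosmetic difference is where you place the diagonal factor $H$ when separating the two triangular sides, which changes nothing.
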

\begin{proof}
We have\begin{align*}
	Z_1 G_0=Z_1 \mathscr E(\Lambda;\boldsymbol \eta)^{-1} G=Z_1 \mathscr E(\Lambda;\boldsymbol \eta)^{-1} S^{-1} H S^{-\top}
\end{align*}
and we get
\begin{align*}
Z_1 \mathscr E(\Lambda;\boldsymbol \eta)^{-1} S^{-1} H S^{-\top}=Z_2.
\end{align*}
Consequently,
\begin{align*}
Z_1 \mathscr E(\Lambda;\boldsymbol \eta)^{-1} S^{-1} =Z_2 S H^{-1}.
\end{align*}
But $Z_1 \mathscr E(\Lambda;\boldsymbol \eta)^{-1} S^{-1} $ is strictly lower triangular while $Z_2 S H^{-1}$ is upper triangular. As both terms are equal the only possibility is that both are the zero matrix
\begin{align*}
Z_1 \mathscr E(\Lambda;\boldsymbol \eta)^{-1} S^{-1} =Z_2 S H^{-1}=0,
\end{align*}
and we get the result.
\end{proof}

Using this results one proves that
\begin{pro}[KP equation]
	The wave function $\varPsi_n$ satisfies the linear equations
	\begin{align*}
	\vartheta_m\varPsi_n=\mathscr P_m(\vartheta_\eta)\varPsi_n
	\end{align*}
	with
\begin{align*}
	\mathscr P_m(\vartheta_\eta)=\vartheta_{\eta}^m-mp^1_n\vartheta_{\eta}^{m-2}+
U_{n,m-3}	\vartheta_{\eta}^{m-3}+ \cdots+U_{n,0}
\end{align*}
	where $\{U_{n,j}\}_{j=0}^{m-3}$ are polynomials in  $\vartheta_{\eta}^lp^1_n,\vartheta_2p^1_n,\dots,\vartheta_{m-1}p^1_n$.
	In particular, 
	the following linear equations hold
\begin{align*}
\vartheta_2\varPsi_n&=\vartheta_\eta^2\varPsi_n-2\vartheta_{\eta}(p^1_n)\varPsi_n,\\
\vartheta_3\varPsi_n &=\vartheta_\eta^3\varPsi_n-3\vartheta_{\eta}(p^1_n)\vartheta_{\eta}\varPsi_n
	-\frac{3}{2}\Big(\vartheta_{\eta}^2(p^1_k)+\vartheta_2(p^1_k)\Big)\varPsi_n,
\end{align*}
and, consequently, its compatibility condition,  which is the KP equation
	\begin{align*}
\vartheta_{\eta}\Big(
4\vartheta_3(p^1_n)+6\big(\vartheta_{\eta}(p^1_n)\big)^2-\vartheta_{\eta}^3(p^1_n)\Big)=\vartheta_2^2(p^1_n),
	\end{align*}
	is fulfilled.
\end{pro}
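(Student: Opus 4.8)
The plan is to read the statement as the classical reduction of the Toda--Zakharov--Shabat description of $\varPsi=P\mathscr E$, already obtained above, to a \emph{scalar} Lax pair for a single component $\varPsi_n$, now regarded as a function of the first time $\vartheta_\eta=\vartheta_1$ with the remaining $\vartheta_l$ acting as deformation parameters. The ingredients are all at hand: $\vartheta_\eta\varPsi=J_+\varPsi=(\beta+\Lambda)\varPsi$, $\vartheta_m\varPsi=(J^m)_+\varPsi$, $J\varPsi=z\varPsi$ with $J$ tridiagonal, together with the Toda identities $\gamma_n=-\vartheta_\eta p^1_n$, $\beta_n=p^1_n-p^1_{n+1}$, $\vartheta_\eta\beta_n=\gamma_{n+1}-\gamma_n$ and the higher Lax equations $\vartheta_lJ=[(J^l)_+,J]$. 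The triangularity Proposition with $Z_1,Z_2$ is what I would invoke to certify that the rewritings below are genuine differential-operator identities in $\vartheta_\eta$ with local coefficients, uniquely determined.

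First I would set up the elimination. From $\vartheta_\eta\varPsi=(\beta+\Lambda)\varPsi$ one reads off, componentwise, $\varPsi_{n+1}=(\vartheta_\eta-\beta_n)\varPsi_n$; differentiating repeatedly gives $\vartheta_\eta^j\varPsi=M_j\varPsi$ where $M_0=I$, $M_{j+1}=\vartheta_\eta M_j+M_jJ_+$, each $M_j$ being upper triangular with $j$ superdiagonals, top band $(M_j)_{n,n+j}=1$ and next band $(M_j)_{n,n+j-1}=\beta_n+\cdots+\beta_{n+j-1}=p^1_n-p^1_{n+j}$ by telescoping. Since $J$ is tridiagonal, $(J^m)_+$ is upper triangular with $m$ superdiagonals, so there are unique diagonal matrices $d^{(0)},\dots,d^{(m)}$ with $(J^m)_+=\sum_{j=0}^{m}d^{(j)}M_j$ (solve the triangular, band-by-band system from the top). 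Applying to $\varPsi$ and using $M_j\varPsi=\vartheta_\eta^j\varPsi$ gives
\[ \vartheta_m\varPsi_n=\big((J^m)_+\varPsi\big)_n=\sum_{j=0}^{m}d^{(j)}_n\,\vartheta_\eta^{j}\varPsi_n=:\mathscr P_m(\vartheta_\eta)\varPsi_n. \]
Comparing top bands gives $d^{(m)}_n=1$; comparing the next band, $(J^m)_{n,n+m-1}=\sum_{l=0}^{m-1}\beta_{n+l}=p^1_n-p^1_{n+m}=(M_m)_{n,n+m-1}$ forces $d^{(m-1)}_n=0$, so there is no $\vartheta_\eta^{m-1}$ term; and the $(n,n+m-2)$ band, after one uses $\vartheta_\eta\beta_k=\gamma_{k+1}-\gamma_k$ and telescopes (the neighbouring-site data $\gamma_{n+1},\dots$ cancels), reduces $d^{(m-2)}_n$ to the displayed multiple of $\gamma_n=-\vartheta_\eta p^1_n$ (namely $2\gamma_n$ when $m=2$). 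The remaining coefficients $U_{n,j}$ are polynomial in the $\beta_{n+l}$ and their $\vartheta_\eta$-derivatives, hence—after the Toda and higher-Lax cancellations—in $\vartheta_\eta^l p^1_n$ and in the $\vartheta_l p^1_n$ produced by $\vartheta_lJ=[(J^l)_+,J]$; this is the asserted structure.

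For the low cases I would carry out the substitution explicitly. For $m=2$, $(J^2)_+\varPsi_n=\varPsi_{n+2}+(\beta_n+\beta_{n+1})\varPsi_{n+1}+(\beta_n^2+\gamma_n+\gamma_{n+1})\varPsi_n$; inserting $\varPsi_{n+1}=(\vartheta_\eta-\beta_n)\varPsi_n$ and $\varPsi_{n+2}=(\vartheta_\eta-\beta_{n+1})(\vartheta_\eta-\beta_n)\varPsi_n$ the first-order term cancels and, via $\vartheta_\eta\beta_n=\gamma_{n+1}-\gamma_n$, the zeroth-order coefficient collapses to $2\gamma_n=-2\vartheta_\eta p^1_n$, i.e.\ $\vartheta_2\varPsi_n=\big(\vartheta_\eta^2-2\vartheta_\eta(p^1_n)\big)\varPsi_n$. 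For $m=3$ the same elimination, now also using $\vartheta_2$-derivatives of the recursion coefficients (from $\vartheta_2J=[(J^2)_+,J]$) to identify the zeroth-order coefficient, gives $\vartheta_3\varPsi_n=\big(\vartheta_\eta^3-3\vartheta_\eta(p^1_n)\vartheta_\eta-\tfrac{3}{2}(\vartheta_\eta^2p^1_n+\vartheta_2p^1_n)\big)\varPsi_n$. Finally, the KP equation is the zero-curvature compatibility of these two scalar Lax equations: from $\vartheta_2\varPsi_n=\mathscr P_2\varPsi_n$ and $\vartheta_3\varPsi_n=\mathscr P_3\varPsi_n$ one gets $[\vartheta_2-\mathscr P_2,\vartheta_3-\mathscr P_3]\varPsi_n=0$; an order count (or, at the level of the dressing matrix $S$, the $Z_1,Z_2$ Proposition) shows the bracket is a differential operator in $\vartheta_\eta$ of order $\le 0$, hence its coefficient vanishes since $\varPsi_n\neq 0$; using the $\vartheta_\eta$-linear part to eliminate the auxiliary function and rewriting through $p^1_n=-\vartheta_\eta\log\tau_n$ yields $\vartheta_\eta\big(4\vartheta_3 p^1_n+6(\vartheta_\eta p^1_n)^2-\vartheta_\eta^3 p^1_n\big)=\vartheta_2^2 p^1_n$.

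I expect the main obstacle to be the bookkeeping in the second paragraph: one must verify that the band-by-band inversion producing $\mathscr P_m$ really terminates into \emph{local} coefficients (no surviving negative powers of $\vartheta_\eta$) and identify those coefficients explicitly as the stated polynomials in $\vartheta_\eta^l p^1_n$ and $\vartheta_l p^1_n$—a careful induction on $m$ intertwined with the Toda and higher-Lax relations, which is precisely what the $Z_1,Z_2$ Proposition is designed to control. Once this is in place, the two explicit cases $m=2,3$ and the passage to the KP identity are routine, if lengthy, algebra.
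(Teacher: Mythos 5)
Your proposal is correct, and it is worth noting that the paper itself contains no written proof of this proposition: it only says that it follows ``using this result'', i.e.\ the preceding $Z_1,Z_2$ uniqueness proposition, which points to the standard Sato/Adler--van Moerbeke route where one writes $\varPsi_n=\bigl(1+p^1_n\vartheta_\eta^{-1}+p^2_n\vartheta_\eta^{-2}+\cdots\bigr)\vartheta_\eta^{n}\mathscr E$, dresses $\vartheta_\eta$ into a pseudo-differential Lax operator, and invokes the uniqueness lemma to discard the strictly lower-triangular remainder when reducing $\vartheta_m\varPsi=(J^m)_+\varPsi$ to a single site. Your route is genuinely different: you eliminate neighbouring sites directly via $\varPsi_{n+1}=(\vartheta_\eta-\beta_n)\varPsi_n$, build the upper-triangular matrices $M_j$ with $M_j\varPsi=\vartheta_\eta^{j}\varPsi$, and solve the finite triangular band system $(J^m)_+=\sum_{j\le m}d^{(j)}M_j$. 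This is in fact self-contained, since that expansion is an exact finite matrix identity with automatically local coefficients, so the $Z_1,Z_2$ lemma is not actually needed in your scheme and your worry about surviving negative powers of $\vartheta_\eta$ does not arise; the lemma is what the dressing-operator route needs. Your explicit checks are right: for $m=2$ the zeroth coefficient is $2\gamma_n=-2\vartheta_\eta(p^1_n)$, and for $m=3$ one finds $d^{(1)}_n=3\gamma_n$ and $d^{(0)}_n=3\beta_n\gamma_n=-\tfrac32\bigl(\vartheta_\eta^2p^1_n+\vartheta_2p^1_n\bigr)$, using $\vartheta_2p^1_n=-\gamma_n(\beta_{n-1}+\beta_n)$ coming from $\Phi_2=-(J^2)_-$, which reproduces the displayed equations; the Zakharov--Shabat computation with these two operators then closes into KP (note that the $\vartheta_\eta^2$ and $\vartheta_\eta$ coefficients of the bracket cancel only after inserting the explicit coefficients, so ``order $\le 0$'' is not a pure order count). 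The one genuinely unfinished piece is the general-$m$ claim that every $d^{(j)}_n$ collapses, after the Toda and higher-Lax substitutions, to a polynomial in $\vartheta_\eta^{l}p^1_n,\vartheta_2p^1_n,\dots,\vartheta_{m-1}p^1_n$ at the single site $n$; you flag this yourself, and it requires either the induction you describe or the dressing argument the paper alludes to. Finally, two slips you inherit from the statement rather than introduce: your band matching actually gives $-m\,\vartheta_\eta(p^1_n)$ as the coefficient of $\vartheta_\eta^{m-2}$ (consistent with the $m=2,3$ displays), not the literal $-mp^1_n$ of the general formula, and the honest compatibility of the two displayed linear equations produces $3\vartheta_2^2(p^1_n)$ on the right-hand side of the final KP identity.
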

\begin{rem}
	 The weight $w$ for which this KP equation appears is
\begin{align*}
	w(z)&=\frac{(a_1)_z\cdots(a_M)_z}{\Gamma(z+1)(b_1)_z\cdots(b_{N})_z}\eta^z\eta_2^{z^2}\eta_3^{z^3},&
	|\eta_2|,|\eta_3|<1.
\end{align*}
and the  corresponding $0$-th moment  is the series
\begin{align*}
	\rho_0&=\sum_{k=0}^\infty w(k)=\sum_{k=0}^\infty \frac{(a_1)_k\cdots(a_M)_k}{(b_1+1)_k\cdots(b_{N-1}+1)_k}\frac{\eta^k\eta_2^{k^2}\eta_3^{k^3}}{k!}.
\end{align*}
We see that only for $\eta_2=\eta_3=1$ we recover a hypergeometric function.
\end{rem}
\subsection{Toda--Pearson compatibility}

For the compatibility of  \eqref{eq:PascalP} and \eqref{eq:MP}, that is for the compatibility of the systems
\begin{align*}
\begin{cases}
\begin{aligned}
P(z+1)&=\Pi P(z),\\
\vartheta_\eta (P(z))&=\Phi  P(z).
\end{aligned}
\end{cases}
\end{align*}
we require
\begin{align*}
\vartheta_\eta(P(z+1)
)&=\vartheta_\eta(\Pi P(z))=\vartheta_\eta(\Pi )P(z)+\Pi \vartheta_\eta (P(z))=(\vartheta_\eta(\Pi )+\Pi J\Phi )P(z),\\
(\vartheta_\eta P )(z+1)&=\Phi   P(z+1)=\Phi  \Pi  P(z)
\end{align*}
to be equal and, consequently,  we obtain
\begin{align*}
\vartheta_\eta(\Pi )=[\Phi ,\Pi ].
\end{align*}

In the general case the dressed Pascal matrix $\Pi$ is a lower unitriangular semi-infinite matrix,  that possibly has an infinite number of subdiagonals. However, for the case when the weight $w(z)=v(z)\eta^z$ satisfies the Pearson equation \eqref{eq:Pearson}, with $v$ independent of $\eta$, that is
\begin{align*}
\theta(k+1)v(k+1)\eta^{k+1}=\sigma(k)v(k)\eta^k
\end{align*}
for $k\in\N_0$; i.e.,
\begin{align*}
\theta(k+1)v(k+1)\eta=\sigma(k)v(k),
\end{align*}
the situation improves as we have the banded semi-infinite matrix $\Psi$ that models the shift in the $z$ variable as in \eqref{eq:P_shift}. From the previous discrete 
Pearson equation we see that  $\sigma(z)=\eta\kappa(z)$ with $\kappa,\theta$  $\eta$-independent polynomials in $z$
\begin{align}\label{eq:Pearson_Toda}
\theta(k+1)v(k+1)=\eta\kappa(k)v(k).
\end{align}

\begin{pro}\label{pro:compatibility Pearson Toda}
	Let us assume   a weight $w$ satisfying the Pearson equation
	\eqref{eq:Pearson} and, consequently,  of the form \eqref{eq:Pearson_weight}. Then, the Laguerre--Freud structure matrix $ \Psi$ given in \eqref{eq:Psi} satisfies

\begin{subequations}
		\begin{align}\label{eq:eta_compatibility_Pearson_1a}
	\vartheta_\eta(\eta^{-1}\Psi^\top H^{-1} )&=[\Phi ,\eta^{-1}\Psi^\top H^{-1}  ], \\
	\vartheta_\eta(\Psi H^{-1} )&=[\Phi ,\Psi H^{-1} ].\label{eq:eta_compatibility_Pearson_1b}
	\end{align}
\end{subequations}
	Alternatively, the above equations can be written as follows
\begin{subequations}
			\begin{align}\label{eq:eta_compatibility_Pearson_2a}
	\vartheta_\eta(\Psi^\top H^{-1} )&=[J_+ ,\Psi^\top H^{-1}  ], \\ \label{eq:eta_compatibility_Pearson_2b}
	\vartheta_\eta(\eta^{-1}\Psi H^{-1} )&=[J_+ ,\eta^{-1}\Psi H^{-1} ].
	\end{align}
\end{subequations}
Relations \eqref{eq:eta_compatibility_Pearson_1a} and \eqref{eq:eta_compatibility_Pearson_1b} are \emph{gauge} equivalent.
\end{pro}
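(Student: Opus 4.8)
The plan is to leverage the \emph{dressing} form of the structure matrix: in the Toda--Pearson situation both $\Psi H^{-1}$ and $\eta^{-1}\Psi^\top H^{-1}$ turn out to be conjugates by $S$ of $\eta$-independent matrices, which makes \eqref{eq:eta_compatibility_Pearson_1a}--\eqref{eq:eta_compatibility_Pearson_1b} immediate; the alternative forms \eqref{eq:eta_compatibility_Pearson_2a}--\eqref{eq:eta_compatibility_Pearson_2b} then follow by peeling off the scalar factors $\eta^{\pm1}$ and invoking the already established compatibility relations \eqref{eq:compatibility_Jacobi_structure_a}--\eqref{eq:compatibility_Jacobi_structure_b}; and the gauge equivalence becomes transparent in this language.

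First I would record the two dressing representations. From $\Psi=\Pi^{-1}H\theta(J^\top)$ together with $H\theta(J^\top)=\theta(J)H$ (a consequence of $JH=HJ^\top$, see \eqref{eq:symmetry_J}), $\Pi=SBS^{-1}$ and $J=S\Lambda S^{-1}$, one gets $\Psi H^{-1}=\Pi^{-1}\theta(J)=SB^{-1}\theta(\Lambda)S^{-1}$, which is exactly \eqref{eq:PsiBLambda}. Transposing $\Psi=\sigma(J)H\Pi^\top$ and using $\sigma(J)H=H\sigma(J^\top)$ gives $\Psi^\top H^{-1}=\Pi\sigma(J)=SB\sigma(\Lambda)S^{-1}$. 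Now invoke the Toda--Pearson hypothesis $\sigma(z)=\eta\kappa(z)$ of \eqref{eq:Pearson_Toda}, so that $\sigma(\Lambda)=\eta\kappa(\Lambda)$ with $\kappa(\Lambda)$, like $\theta(\Lambda)$ and $B$, independent of $\eta$. Hence $\Psi H^{-1}=SM_\theta S^{-1}$ and $\eta^{-1}\Psi^\top H^{-1}=SM_\kappa S^{-1}$, where $M_\theta:=B^{-1}\theta(\Lambda)$ and $M_\kappa:=B\kappa(\Lambda)$ are both $\eta$-independent.

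Next, since $\Phi=(\vartheta_\eta S)S^{-1}$ by definition, one has $\vartheta_\eta(S^{-1})=-S^{-1}\Phi$, and therefore for \emph{any} $\eta$-independent matrix $M$,
\begin{align*}
\vartheta_\eta(SMS^{-1})=\Phi\,SMS^{-1}-SMS^{-1}\,\Phi=[\Phi,SMS^{-1}].
\end{align*}
Applying this to $M_\theta$ and $M_\kappa$ yields \eqref{eq:eta_compatibility_Pearson_1b} and \eqref{eq:eta_compatibility_Pearson_1a} at once. For the alternative forms, write $\Psi^\top H^{-1}=\eta\,(\eta^{-1}\Psi^\top H^{-1})$ and use $\vartheta_\eta\eta=\eta$ to obtain $\vartheta_\eta(\Psi^\top H^{-1})=\Psi^\top H^{-1}+[\Phi,\Psi^\top H^{-1}]$; since $\Phi=-J_-$ by \eqref{eq:MS} we have $J_+=J+\Phi$, so $[J_+,\Psi^\top H^{-1}]=[J,\Psi^\top H^{-1}]+[\Phi,\Psi^\top H^{-1}]$, and $[J,\Psi^\top H^{-1}]=\Psi^\top H^{-1}$ is precisely \eqref{eq:compatibility_Jacobi_structure_b}; comparing gives \eqref{eq:eta_compatibility_Pearson_2a}. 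Symmetrically, from $\vartheta_\eta(\eta^{-1})=-\eta^{-1}$ and $[\Psi H^{-1},J]=\Psi H^{-1}$ of \eqref{eq:compatibility_Jacobi_structure_a} one gets \eqref{eq:eta_compatibility_Pearson_2b}. Finally, the gauge equivalence of \eqref{eq:eta_compatibility_Pearson_1a} and \eqref{eq:eta_compatibility_Pearson_1b} is manifest: under the gauge transformation $X\mapsto S^{-1}XS$, which trivializes the $\eta$-connection because $\Phi=(\vartheta_\eta S)S^{-1}$, both equations collapse to the same empty identity $\vartheta_\eta M_\kappa=0$, respectively $\vartheta_\eta M_\theta=0$, for an $\eta$-independent matrix.

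I do not expect a genuine obstacle here: the argument is entirely structural. The only points that require care are the correct handling of transposes when passing between $\Psi$ and $\Psi^\top$ (and the repeated use of $JH=HJ^\top$ to commute $H$ past functions of $J$), and the bookkeeping of the scalar factors $\eta^{\pm1}$ under the action of $\vartheta_\eta$. The conceptual content is simply that the Pearson reduction forces the $z$-shift structure matrices to be $S$-conjugates of $\eta$-independent matrices, and everything else is a short computation.
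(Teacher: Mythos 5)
Your proposal is correct, but its emphasis is inverted with respect to the paper's proof. The paper's primary argument for \eqref{eq:eta_compatibility_Pearson_1a}--\eqref{eq:eta_compatibility_Pearson_1b} is the compatibility of the two linear systems $\sigma(z)P(z+1)=\Psi^\top H^{-1}P(z)$, respectively $\theta(z)P(z-1)=\Psi H^{-1}P(z)$, with the flow $\vartheta_\eta P=\Phi P$, using $\vartheta_\eta\sigma=\sigma$ and $\vartheta_\eta\theta=0$; the dressing identity $\Psi H^{-1}=SB^{-1}\theta(\Lambda)S^{-1}$ of \eqref{eq:PsiBLambda} is only mentioned there as an immediate alternative. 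You make that dressing representation the engine of the whole proof, adding its companion $\eta^{-1}\Psi^\top H^{-1}=SB\kappa(\Lambda)S^{-1}$ (via $\Psi^\top H^{-1}=\Pi\sigma(J)$ and $\sigma=\eta\kappa$), after which both equations are instances of $\vartheta_\eta(SMS^{-1})=[\Phi,SMS^{-1}]$ for $\eta$-independent $M$; this is cleaner and makes the Pearson reduction's role transparent, while the paper's compatibility route stays closer to the structural equations \eqref{eq:P_shift} and generalizes to situations where no explicit dressing formula is at hand. For \eqref{eq:eta_compatibility_Pearson_2a}--\eqref{eq:eta_compatibility_Pearson_2b} you follow essentially the paper ($\Phi=J_+-J$ plus the compatibility relations), and in fact your pairing --- \eqref{eq:compatibility_Jacobi_structure_b} for \eqref{eq:eta_compatibility_Pearson_2a} and \eqref{eq:compatibility_Jacobi_structure_a} for \eqref{eq:eta_compatibility_Pearson_2b} --- is the correct one, whereas the paper's text cites them in the opposite order. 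The one place where you genuinely deviate is the gauge-equivalence claim: the paper proves it concretely by transposing \eqref{eq:eta_compatibility_Pearson_1b}, conjugating by $H$, and invoking the Sato--Wilson relation \eqref{eq:Mscr} in the form $\vartheta_\eta(H)H^{-1}-H\Phi^\top H^{-1}=J+\Phi$ to recover \eqref{eq:eta_compatibility_Pearson_1a}; your observation that conjugation by $S^{-1}$ trivializes both equations (to $\vartheta_\eta M_\theta=0$ and $\vartheta_\eta M_\kappa=0$, which are \emph{different} trivial identities) is a legitimate reading of ``gauge equivalent'' but does not exhibit the explicit $H$-gauge transformation linking the two relations, so if you want to reproduce the paper's statement in its intended sense you should add that short computation.
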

\begin{proof}
 We look at the compatibility of \eqref{eq:P_shift} and \eqref{eq:MP}; i. e. for 	 \eqref{eq:eta_compatibility_Pearson_1a} we consider
	\begin{align}\label{eq:first_couple}
	\begin{cases}
	\begin{aligned}
	\sigma(z)P(z+1)&=\Psi^\top H^{-1} P(z),\\
	\vartheta_\eta (P(z))&=\Phi  P(z),
	\end{aligned}
	\end{cases}
	\end{align}
while  for  \eqref{eq:eta_compatibility_Pearson_1b} we consider
	\begin{align}\label{eq:second_couple}
	\begin{cases}
	\begin{aligned}
	\theta(z)P(z-1)&=\Psi H^{-1} P(z),\\
	\vartheta_\eta (P(z))&=\Phi  P(z).
	\end{aligned}
	\end{cases}
	\end{align}
	
	From \eqref{eq:first_couple}, observing that  $\vartheta_\eta(\sigma)=\sigma$, we deduce
	\begin{align*}
	\vartheta_\eta\big(\sigma(z)P(z+1)\big)&=\vartheta_\eta(\Psi^\top H^{-1} P(z))
	=\vartheta_\eta(\Psi^\top H^{-1}) P(z)+\Psi^\top H^{-1} \vartheta_\eta(P(z))
	\\&=\big(\vartheta_\eta(\Psi^\top H^{-1}) +\Psi^\top H^{-1} \Phi  \big)P(z),\\
	\vartheta_\eta\big(\sigma(z)P(z+1)\big)&=\sigma(z)P(z+1)+\sigma(z)\Phi  P(z+1)=(\Phi+I)  \Psi^\top H^{-1} P(z).
	\end{align*}
	so that
	\begin{align*}
\vartheta_\eta(\Psi^\top H^{-1}) -\Psi^\top H^{-1}=\big[\Phi , \Psi^\top H^{-1}\big],
	\end{align*}
and recalling $\vartheta_\eta(\eta^{-1})	=-\eta^{-1}$ we obtain 	 \eqref{eq:eta_compatibility_Pearson_1a}.
	
	From \eqref{eq:second_couple} we find ($\vartheta_\eta\theta=0$)
		\begin{align*}
	\vartheta_\eta\big(\theta(z)P(z-1)\big)&=\vartheta_\eta(\Psi H^{-1} P(z))
	=\vartheta_\eta(\Psi H^{-1}) P(z)+\Psi H^{-1} \vartheta_\eta(P(z))
	\\&=\big(\vartheta_\eta(\Psi H^{-1}) +\Psi H^{-1} \Phi  \big)P(z),\\
	\vartheta_\eta\big(\theta(z)P(z-1)\big)&=\theta(z)\Phi P(z-1)=\Phi \Psi H^{-1} P(z).
	\end{align*}
	and  \eqref{eq:eta_compatibility_Pearson_1b} follows immediately.

	Notice that from \eqref{eq:PsiBLambda} this equation \eqref{eq:eta_compatibility_Pearson_1b}  follows immediately:
	\begin{align*}
		\vartheta_{\eta}(\Psi H^{-1})=[(\vartheta_{\eta}S) S^{-1}, S B^{-1} \theta(\Lambda)S^{-1}]=[\Phi, \Psi H^{-1}].
	\end{align*}
This comment applies similarly to \eqref{eq:eta_compatibility_Pearson_1a}.
	
	Using $\Phi=-J_-=J_+-J$ in \eqref{eq:eta_compatibility_Pearson_1a} and \eqref{eq:compatibility_Jacobi_structure_a} we easily get \eqref{eq:eta_compatibility_Pearson_2a}, and similarly from\eqref{eq:eta_compatibility_Pearson_1b} and \eqref{eq:compatibility_Jacobi_structure_b} we deduce \eqref{eq:eta_compatibility_Pearson_2b}.
	
		Finally, we study the relation between \eqref{eq:eta_compatibility_Pearson_1a} and \eqref{eq:eta_compatibility_Pearson_1b}. Assume that \eqref{eq:eta_compatibility_Pearson_1b}, $\vartheta_\eta(\Psi H^{-1})=[\Phi,\Psi H^{-1}]$, holds and take its transpose to obtain  $\vartheta_\eta( H^{-1}\Psi^\top)=-[\Phi^\top, H^{-1}\Psi^\top]$, which is not \eqref{eq:eta_compatibility_Pearson_1a}. Observe that
		\begin{align*}
		\Psi^\top H^{-1}=H (H^{-1} \Psi ^\top)H^{-1}
		\end{align*}
		so that, the following \emph{gauge} type transformation equation arises
		\begin{align*}
	\vartheta_\eta(	\Psi^\top H^{-1})&=\vartheta_\eta (H) (H^{-1} \Psi ^\top)H^{-1}+
	H  \vartheta_\eta  (H^{-1} \Psi ^\top)H^{-1}-H (H^{-1} \Psi ^\top)H^{-1}\vartheta_\eta (H)H^{-1}\\&=\big[\vartheta_\eta (H) H^{-1},H (H^{-1} \Psi ^\top)H^{-1}\big]
	-H[\Phi^\top, H^{-1}\Psi^\top]H^{-1}\\&=
	\big[\vartheta_\eta (H) H^{-1}-H\Phi^\top H^{-1},\Psi ^\top H^{-1}\big].
		\end{align*}
	Now, attending to \eqref{eq:Mscr}, we have
$\vartheta_\eta (H) H^{-1}-H \Phi ^\top H^{-1}=J+\Phi$ so that 
		\begin{align*}
\vartheta_\eta(	\Psi^\top H^{-1})&=
\big[J+\Phi,\Psi ^\top H^{-1}\big]=\big[\Phi,\Psi ^\top H^{-1}\big]+\Psi ^\top H^{-1},
\end{align*}
and  \eqref{eq:eta_compatibility_Pearson_1a} follows immediately.
\end{proof}

\section{Conclusions and outlook}
The use of the Gauss--Borel factorization of the moment matrix has been throughly used by Adler and van Moerbeke in their studies of integrable systems and orthogonal polynomials, see \cite{adler_moerbeke_1,adler_moerbeke_2,adler_moerbeke_4}. We have extended these ideas and applied them in different contexts, CMV orthogonal polynomials, matrix orthogonal polynomials, multiple orthogonal polynomials  and multivariate orthogonal  \cite{am,afm,nuestro0,nuestro1,nuestro2,ariznabarreta_manas0,ariznabarreta_manas1,ariznabarreta_manas2,ariznabarreta_manas_toledano}.
For  an general overview see \cite{intro}.

In this paper,  we extended those ideas to the discrete world. In particular we applied this approach to the study of the consequences of the Pearson equation on the moment matrix and Jacobi matrices. For that description a new banded matrix is required, the Laguerre-- Freud structure matrix that encodes the Laguerre--Freud relations for  the recurrence coefficients. We have also found that the contiguous relations fulfilled generalized hypergeometric functions determining the moments of the weight described for the squared norms of the orthogonal polynomials a discrete Toda hierarchy known as Nijhoff--Capel equation, see \cite{nijhoff}. 

Further work in this direction are the study of the role of Christoffel and Geronimus transformations for the description of the mentioned contiguous relations, and the use of the Geronimus--Christoffel transformations  to characterize the shifts in the spectral independent variable of the orthogonal polynomials \cite{Manas}. In \cite{Fernandez-Irrisarri_Manas}  these ideas are applied  to generalized Charlier, Meixner, and type I  Hahn discrete orthogonal polynomials extending the results of \cite{diego,smet_vanassche, filipuk_vanassche0,filipuk_vanassche1,filipuk_vanassche2}.

For the future, we will  study the type II generalized Hahn polynomials, and extend these techniques to multiple discrete orthogonal polynomials  \cite{Arvesu_Coussment_Coussment_VanAssche} and its relations with the transformations presented in \cite{bfm}
and  quadrilateral lattices \cite{quadrilateral1,quadrilateral2},  

\section*{Acknowledgments} This work has its seed  in several inspiring conversations with Diego Dominici during a research stay at Johannes Kepler University at Linz.


\end{document}